\newtheorem{prop}{Proposition}[section]
\newtheorem{thm}[prop]{Theorem}
\newtheorem{lem}[prop]{Lemma}
\newtheorem{con}[prop]{Assumption}
\newtheorem{cor}[prop]{Corollary}
\theoremstyle{definition}
\newtheorem{defn}[prop]{Definition}
\newtheorem{rem}[prop]{Remark}
\numberwithin{equation}{section}
\renewcommand{\phi}{\varphi}
\renewcommand{\bar}[1]{\ensuremath{\overline{#1}}}
\renewcommand{\epsilon}{\varepsilon}
\newcommand{\tr}{\textnormal{tr}}
\newcommand{\trinorm}[1]{{\left\vert\kern-0.25ex\left\vert\kern-0.25ex\left\vert #1 
		\right\vert\kern-0.25ex\right\vert\kern-0.25ex\right\vert}}
\newcommand{\expl}[1]{\textcolor{gray}{#1}} 
\newcommand{\delete}[1]{} 
\newcommand*\samethanks[1][\value{footnote}]{\footnotemark[#1]}
\author{Francesca Biagini\thanks{Workgroup Financial and Insurance Mathematics, Department of Mathematics, Ludwig-Maximilians-Universit{\"a}t München, Theresienstrasse 39, 80333 Munich, Germany. Emails: biagini@math.lmu.de, bollweg@math.lmu.de.} \and Georg Bollweg\samethanks[1] \and Katharina Oberpriller\thanks{University of Freiburg, Ernst-Zermelo-Strasse 1, 79104 Freiburg, Germany. Email: katharina.oberpriller@stochastik.uni-freiburg.de.}}
\title{Non-linear Affine Processes with Jumps}
\begin{document}
	\maketitle
	
	\section*{\centering Abstract}
    \begin{center}
    	\begin{minipage}{12cm}
    		We present a probabilistic construction of $\mathbb{R}^d$-valued non-linear affine processes with jumps.
    		Given a set $\Theta$ of affine parameters, we define a family of sublinear expectations on the Skorokhod space under which the canonical process $X$ is a (sublinear) Markov process with a non-linear generator.
    		This yields a tractable model for Knightian uncertainty for which the sublinear expectation of a Markovian functional can be calculated via a partial integro-differential equation.
    	\end{minipage}
    \end{center}
    \textbf{Keywords:} sublinear expectation,  non-linear affine processes, dynamic programming, PIDE\\
    \textbf{Mathematics Subject Classification (2020):} 60G65, 60G07 \\

    \section{Introduction}	
	Aim of this paper is to introduce multi-dimensional non-linear affine processes with jumps.
	Classical affine processes are highly relevant for applications in finance, e.g. for modelling phenomena like volatility, stock prices, credit default or interest rates and have been studied in several contributions in the classical setting, see \cite{filipovic_time-inhomogeneous_2005}, \cite{kallsen_exponentially_2010}, \cite{keller-ressel_affine_2011}, \cite{cuchiero_affine_2011-1}, \cite{keller-ressel_regularity_2013}, \cite{keller-ressel_affine_2018}.
	Over the last years, several different approaches have been developed in order to establish so-called robust settings, which are independent of the underlying priors, see among others \cite{acciaio_larsson_2017}, \cite{bbkn_2017}, \cite{denis_hu_peng_2010}, \cite{denis_martini_2006}, \cite{guo_pan_peng_2017}, \cite{neufeld_robust_2016}, \cite{nutz_constructing_2013} and \cite{soner_touzi_zhang_MRP}.
	
	To our knowledge, the first definition of affine processes under model uncertainty is from \cite{fadina_affine_2019}, where they consider the one-dimensional continuous case with canonical state space. Here, non-linear continuous affine processes are represented by a family of semimartingale laws, such that the differential characteristics are bounded from above and below by  affine functions depending on the current state. In particular, the coefficients of the affine functions are assumed to be in a parameter set $\Theta$ which is fixed. The idea in \cite{fadina_affine_2019} is based on the papers \cite{neufeld_nonlinear_2016} and \cite{hollender_levy-type_2016}, which consider non-linear Lévy processes (with jumps) and non-linear Markov processes, respectively. In all these works, a sublinear expectation associated to the underlying sets of priors is defined and a dynamic programming principle is derived. A different approach has been studied in \cite{denk_kupper_nendel_2020}, \cite{denk_roeckner_2020}, \cite{nendel_2020}, where non-linear semigroup theory is used to define non-linear Markov processes. \\
	The scope of this paper is to extend the results on non-linear affine processes in \cite{fadina_affine_2019} to include jumps and an arbitrary closed state space $\textnormal{S}\subseteq\mathbb{R}^d$ for a given set of parameters $\Theta$.
	This is achieved by combining the probabilistic construction from \cite{fadina_affine_2019} and \cite{neufeld_nonlinear_2016} with findings from \cite{hollender_levy-type_2016}. In particular, the latter paper provides sufficient conditions on the uncertainty sets such that the dynamic programming principle follows.
	
	More specifically, we consider the canonical process $X$ on the Skorokhod space $\Omega=\textnormal{D}(\mathbb{R}_+,\mathbb{R}^d)$ equipped with a family of sublinear expectations $\{\mathcal{E}^x\}_{x\in\mathbb{R}^d}$ defined as
	\begin{equation}
	 \mathcal{E}^x(f):=\sup_{P\in\mathcal{P}_x} E_P[f],
	\end{equation}
	where $\mathcal{P}_x$ is a set of probability measures, and $E_P$ is the usual expectation with respect to $P$.
	In this setting, $\mathcal{P}_x$ is the set of priors such that under every $P\in\mathcal{P}_x$, $X$ is a semimartingale starting at $x$ with differential characteristics evolving in the set $\Theta(X)$. Here, by a slight abuse of notation $\Theta(\cdot)$ denotes an affine set-valued map for a fixed set of parameters $\Theta$.
	In line with \cite{neufeld_nonlinear_2016} and \cite{fadina_affine_2019}, we call this process \emph{non-linear affine process}.

    In this framework, it is enough to assume that the set of parameters $\Theta$ is closed to prove the dynamic programming principle and the Markov property for sublinear expectations. In contrast to the approach in \cite{fadina_affine_2019} and \cite{neufeld_nonlinear_2016}, our argument relies only on the separability of the space of Lévy measures.  
	Hence, it can be extended to other classes of processes, e.g. polynomial processes. 
	Moreover, since we consider a more general parameter set $\Theta$, we obtain a broader class of non-linear affine processes than in \cite{fadina_affine_2019}, even for the continuous case with dimension $d=1$ on the canonical state space.

	Furthermore, we slightly modify the construction in \cite{fadina_affine_2019} in order to admit an arbitrary closed state space $\textnormal{S}\subseteq\mathbb{R}^d$.
	In particular, we fix the state space $\textnormal{S}$ a priori and use it to define the set-valued map $\Theta(X)$. In this way, we can guarantee that
    the process $X$ is hindered from continuously exiting the state space unless it jumps outside of $\textnormal{S}$.
	These properties are formalised in Lemma \ref{lem:jump-exit} and Corollary \ref{cor:trivial_uncertainty_subset}.
	
	Finally, as in \cite{fadina_affine_2019} and \cite{neufeld_nonlinear_2016}, we establish a link to the corresponding parabolic partial integro-differential equation (PIDE)
	\begin{equation}\label{eq:intro-PIDE}
		\partial_t v(t,x)-\mathcal{A}_xv(t,x)=0,
	\end{equation}
	which can be understood as the analogue of the Kolmogorov equation.
	The non-linear generator $\mathcal{A}_x$ is the supremum of the generators of classical affine processes with parameters in $\Theta$.
	More specifically, we show that if the value function $v(t,x):=\mathcal{E}^x(\phi(X_t))$ is continuous for a non-linear affine proces $X$, then $v$ is a viscosity solution of the PIDE \eqref{eq:intro-PIDE} with initial condition $v(0,x)=\phi(x)$.
	The consideration of a general parameter set $\Theta$ and of a general state space $\textnormal{S}$ comes at the cost of many technical subtleties compared to the corresponding approaches in \cite{fadina_affine_2019} and \cite{neufeld_measurability_2014}.
	In particular, the PIDE \eqref{eq:intro-PIDE} does not satisfy the degenerate ellipticity condition in general. Thus most existing comparison and uniqueness results in the literature are not applicable.
	Hence, we cannot prove the uniqueness of a viscosity solution of the PIDE \eqref{eq:intro-PIDE} in full generality, as done in \cite{neufeld_nonlinear_2016}.
	However, we are able to establish a uniqueness result for a special case with $\textnormal{S}=\mathbb{R}^d$ and a modified parameter map $\hat{\Theta}$  with associated family of sublinear expectations $\lbrace \hat{\mathcal{E}}^x \rbrace_{x \in \mathbb{R}^d}$ and a non-linear generator $\hat{\mathcal{A}}_x$.
	In this case, if the value function $\hat{v}(t,x):=\hat{\mathcal{E}}^x(\phi(X_t))$ is continuous, then $v$ is the unique viscosity solution to the PIDE induced by $\hat{\mathcal{A}}_x$ with initial condition $\hat{v}(0,x)=\phi(x)$.
	This result is neither covered by the comparison results used in \cite{neufeld_nonlinear_2016} or \cite{fadina_affine_2019}, nor we assume a uniform bounded condition as in \cite{hollender_levy-type_2016} or \cite{kuhn_viscosity_2019}. 
	Instead, we only impose a Lipschitz condition to prove the uniqueness.\newline
	
	The remainder of this paper is organised as follows.
	In Section \ref{sec:setup}, we introduce the definition of a multidimensional non-linear affine process with jumps and state space $\textnormal{S}\subseteq \mathbb{R}^d$. In Section \ref{sec:Dynamic}, we prove the dynamic programming principle and the Markov property for the family of sublinear expectations.
	In Section \ref{sec:PIDE}, we establish the connection with the Kolmogorov-type backward equation. In Section \ref{sec:Uniqueness}, we give a uniqueness result for a special case.
	Finally, we present some examples in Section \ref{sec:examples}.
    
    \section{Non-linear affine processes} \label{sec:setup}
	
    For some $d\in\mathbb{N}$, let $\Omega=\text{D}(\mathbb{R}_{+},\mathbb{R}^d)$ be the space of all $\mathbb{R}^d$-valued càdlàg paths equipped with the Skorokhod topology and the corresponding Borel $\sigma$-algebra $\mathcal{F}$.
    Moreover, let $\mathbb{F}=(\mathcal{F}_t)_{t\geq 0}$ denote the raw filtration generated by the canonical process $X=(X_t)_{t\geq 0}$ given by $X_t(\omega)=\omega(t)$.
    In the sequel the $i$-th coordinate of the vector-valued process $X$ is denoted by $X^i$.
    Similarly, the $i,j$-th coordinate of a matrix-valued process $M$ is denoted by $M^{i,j}$.
    The same notation applies to elements in $\mathbb{R}^d$.
    
    Let $\mathfrak{P}(\Omega):=\mathfrak{P}(\Omega,\mathcal{F})$ denote the Polish space of all probability measures on $(\Omega,\mathcal{F})$. 
    Let $P \in \mathfrak{P}(\Omega)$. The process $X$ is called a $(P$,$\mathbb{F})$-semimartingale, if there exists right-continuous $\mathbb{F}$-adapted processes $M$ and $B$ such that $M_0=0=B_0$, $M$ is a local martingale, $B$ is of locally finite variation, and $X=X_0+M+B$ $P$-a.s.
    Fix a truncation function $h$, i.e. a bounded Borel-measurable function with $h(x)=x$ on some neighbourhood of zero, and let $\mu$ be the jump measure of $X$.
    Then using the unique canonical semimartingale decomposition as in \cite[Theorem II.2.34]{jacod_limit_2003}, we have $P$-a.s.
    \begin{align}\label{eq:CanonicalDecomposition}
    	X_t
    	&=	X_0
    	+\,^c\!M^P_t
    	+\underbrace{\int_{0}^t\int_{\mathbb{R}^d}h(z)\,(\mu-K^P)(ds,dz)}_{=:\,^j\!M^P_t}
    	+\underbrace{\int_0^t\int_{\mathbb{R}^d}h(z)\,K^P(ds,dz)}_{=:\,^j\!B^P_t}
    	+\,^c\!B^P_t
    	+\underbrace{\int_{0}^t \int_{\mathbb{R}^d} \left(z-h(z)\right)\,\mu(ds,dz)}_{=:J^P_t},
    \end{align}
    where $K^P$ is the predictable compensator of the jump measure $\mu$, $\,^c\!M^P$ and $\,^j\!M^P$ are the continuous and purely discontinuous local martingale parts respectively, $B^P:=\,^c\!B^P+\,^j\!B^P$ is the predictable finite variation part, and $J^P$ is the unbounded jump part.
    
    If $B^P, [\,^c\!M^P], K^P$ are absolutely continuous with respect to the Lebesgue measure $P$-a.s., there exists an $\mathbb{F}$-adapted process $(b^P,a^P,k^P)$ such that $P$-a.s.
    \begin{equation}
    	dB^P_t=b^P_t\,dt \qquad
    	d[\,^c\!M^P]_t=a^P_t\,dt \qquad
    	dK^P([0,t];dz)=k^P_t(dz)\,dt.
    \end{equation}
    Such a process $(b^P,a^P,k^P)$ is called \emph{$(P$,$\mathbb{F})$-differential semimartingale characteristics} of $X$ relative to $h$.
    
    Note that $a^P$ takes values in the cone of non-negative definite symmetric matrices $\mathbb{S}_+$, where $\mathbb{S}$ denotes the vector space of symmetric matrices.
    The process $k^P$ takes values in the cone of non-negative Lévy measures $\mathfrak{L}_+\subseteq \mathfrak{L}$, which are defined as
    \begin{align}\label{eq:LevyCone}
    	\mathfrak{L}_+&:=\left\{k\in\mathfrak{M}_+(\mathbb{R}^d)\,:\,\int_{\mathbb{R}^d}\left(\|z\|^2\wedge\, 1\right)\,k(dz)<\infty\text{ and }k\left(\{0\}\right)=0\right\},\\
    	\label{eq:LevySpace}
    	\mathfrak{L}&:=\left\{k\in\mathfrak{M}(\mathbb{R}^d)\,:\,\int_{\mathbb{R}^d}\left(\|z\|^2\wedge\, 1\right)\,|k(dz)|<\infty\text{ and }k\left(\{0\}\right)=0\right\},
    \end{align}
    where $\mathfrak{M}_{+}(\mathbb{R}^d)$ and $\mathfrak{M}(\mathbb{R}^d)$ are the cone of all non-negative measures and the vector space of all signed measures on $\mathbb{R}^d$, respectively.
    By \cite[Theorem 8.9.4]{bogachev_measure_2007}, the space of all finite measures $\mathfrak{M}^f(\mathbb{R}^d)$ 
    endowed with the Kantorovich-Rubinstein metric $d_{\mathfrak{M}^f}$ 
    is a separable metric space.
    A Lévy measure $k\in\mathfrak{L}$ can be associated with the finite measure $\hat{k}:\,A\mapsto\int_A(\|z\|^2\wedge\,1)\,k(dz)$, $A\in\mathcal{B}(\mathbb{R}^d)$.
    Thus, $\mathfrak{M}^f(\mathbb{R}^d)$ is a separable metric space with the metric $d_{\mathfrak{L}}(k,m):=d_{\mathfrak{M}^f}(\hat{k},\hat{m})$
    on 
    $\mathfrak{L}$, cf. \cite[Lemma 2.3]{neufeld_measurability_2014}. 
    We endow $\mathfrak{L}$ with the corresponding Borel $\sigma$-algebra.
    Moreover, we can define a metric $d_\pi$ such that $\mathbb{R}^d\times\mathbb{S}\times\mathfrak{L}$ is again a separable metric space (and so are subspaces and finite product spaces of it), cf. \cite[Theorem 16.4c]{willard_general_1970}.
    
    In classical stochastic analysis, an affine process is a Markov semimartingale with differential characteristics that are affine functions of the current state, i.e.
    \begin{equation}\label{eq:AffineFunctionsCharacteristics}
    	(b^P,a^P,k^P)=(\beta(X),\alpha(X),\nu(X)) \quad dt\otimes dP\text{-a.e.}
    \end{equation} for some affine functions $\beta, \alpha, \nu$ on the state space of $X$. Note that the functions $\beta, \alpha, \nu$ do not depend on $P$.
    In the following, we define an affine process under parameter uncertainty by allowing the differential characteristics $(b^P,a^P,k^P)$ to evolve in a random set $\Theta(X)$ which depends on the value of $X$ in an affine manner.
    For this purpose, fix a closed, non-empty state space $\textnormal{S}\subseteq\mathbb{R}^d$.
    Let $\beta=(\beta_0,\ldots,\beta_d)\in(\mathbb{R}^d)^{d+1}$, $\alpha=(\alpha_0,\ldots,\alpha_d)\in\mathbb{S}^{d+1}$, $(\nu_0,\ldots,\nu_d)\in\mathfrak{L}^{d+1}$.
    Define for $x=(x^1,\ldots,x^d)^T \in \mathbb{R}^d$ the following functions	
    \begin{align}
    	\beta(x)&:=\Big(\beta_0+(\beta_1,\ldots,\beta_d)\,x\Big)\,\mathbf{1}_{\textnormal{S}}(x)
    	=\Big(\beta_0+\sum_{i=1}^d x^i \beta^i \Big)\,\mathbf{1}_{\textnormal{S}}(x) \in \mathbb{R}^d, \label{eq:theta1}\\
    	\alpha(x)&:=\Big(\alpha_0+(\alpha_1,\ldots,\alpha_d)\,x\Big)\,\mathbf{1}_{\textnormal{S}}(x) 
    	=\Big(\alpha_0 + \sum_{i=1}^d x^i \alpha_i\Big)\,\mathbf{1}_{\textnormal{S}}(x) \in \mathbb{S},\label{eq:theta2}\\
    	\nu(x)&:=\Big(\nu_0+(\nu_1,\ldots,\nu_d)\,x\Big)\,\mathbf{1}_{\textnormal{S}}(x)
    	=\Big(\nu_0+\sum_{i=1}^d x^i \nu_i \Big)\,\mathbf{1}_{\textnormal{S}}(x) \in \mathfrak{L}. \label{eq:theta3}
    \end{align}
    From now on, we refer to any $\theta=(\beta,\alpha,\nu)
    \in(\mathbb{R}^d)^{d+1}\times\mathbb{S}^{d+1}\times\mathfrak{L}^{d+1}$ as \emph{parameter}, and identify it with the map
    \begin{equation}\label{eq:theta}
    	\mathbb{R}^d\rightarrow\mathbb{R}^d\times\mathbb{S}\times\mathfrak{L},\quad x\mapsto\theta(x):=(\beta(x),\alpha(x),\nu(x)),
    \end{equation}
    where $\alpha$, $\beta$, $\nu$ are as in \eqref{eq:theta1} - \eqref{eq:theta3}.
    Similarly for a subset $\Theta\subseteq
    (\mathbb{R}^d)^{d+1}\times\mathbb{S}^{d+1}\times\mathfrak{L}^{d+1}$ and $x\in\mathbb{R}^d$, define
    \begin{equation}\label{eq:Theta}
    	\Theta(x):=\Big\{\theta(x)\,:\,\theta\in\Theta\Big\}\subseteq\mathbb{R}^d\times\mathbb{S}\times\mathfrak{L}.
    \end{equation}
    A set $\Theta$ of parameters is \emph{closed}, if it is closed with respect to the topology which makes $(\mathbb{R}^d)^{d+1}\times\mathbb{S}^{d+1}\times\mathfrak{L}^{d+1}$ a separable metric space.
    
    Finally, for $x \in \mathbb{R}^d$, define the following set of probability measures	
    \begin{equation}\label{eq:NLAJD}
        \mathcal{P}_x(\Theta):=\Big\{P\in\mathfrak{P}_{sem}^{ac}(\Omega)\,:\,\;P(X_0=x)=1;\:(b^P\!, a^P\!, k^P)\in\Theta(X)\text{ $dt\otimes dP$-a.e.}\Big\},
    \end{equation}
    with
    \begin{equation}\label{eq:P-semac}
    	\mathfrak{P}_{sem}^{ac}(\Omega):=\Big\{P\in\mathfrak{P}_{sem}(\Omega)\,:\ \text{$X$ admits $(P$-$\mathbb{F})$-differential characteristics $(b^P,a^P,k^P)$} \Big\}
    \end{equation}
    and
    \begin{equation}
    	\mathfrak{P}_{sem}:= \Big\{P\in\mathfrak{P}(\Omega)\,:\ \text{$X$ is a semimartingale on  $(\Omega, \mathcal{F}, \mathbb{F},P)$} \Big\}.
    \end{equation}
    Note that $\mathfrak{P}_{sem}^{ac}$ is not empty, as it always contains the Dirac measures on $\Omega$ with point mass on a constant path.
    To each $\mathcal{P}_x(\Theta)$, we can associate the sublinear expectation
    \begin{equation}\label{eq:defSublinearExpectation}
    	\mathcal{E}^x(f):=\sup_{P\in\mathcal{P}_x(\Theta)}E_P[f]
    \end{equation}
    for any Borel-measurable function $f:\,\Omega\rightarrow\mathbb{R}$ with
    \begin{equation}
    	E_P[f ]:=E_P[f^+ ]-E_P[f^- ],
    \end{equation}
    where $f^+$ and $f^-$ denote the positive and negative parts of $f$ respectively.
    We use the convention $\sup \emptyset = - \infty$ and $\infty - \infty := - \infty.$
    	
    	
    \begin{defn}\label{def:NLAP}
    	Let $\Theta\subseteq(\mathbb{R}^d)^{d+1}\times\mathbb{S}^{d+1}\times\mathfrak{L}^{d+1}$ be non-empty and closed. The tuple $(X,\{\mathcal{P}_x(\Theta)\}_{x\in\textnormal{S}},\textnormal{S})$, where $X$ is the canonical process and $\mathcal{P}_x(\Theta)$ is defined in \eqref{eq:NLAJD}, is called \emph{non-linear affine process} with parameter set $\Theta$ and state space $\textnormal{S}$.  
    \end{defn}
    	
    \begin{rem}\label{rem:filtration-closedness}
    	\begin{enumerate}
    		\item \label{rem:filtration}
    		Note that in \eqref{eq:P-semac} we consider $(P$,$\mathbb{F})$-semimartingales, where $\mathbb{F}$ is the raw filtration.
    		We could also work with $\mathbb{F}_+=(\mathcal{F}_{t+})_{t\geq 0}$ its right-continuous version or $\mathbb{F}_+^P=(\mathcal{F}_{t+}^P)_{t\geq 0}$ its usual augmentation under $P$.
    		By \cite[Proposition 2.2]{neufeld_measurability_2014}, the choice of filtration is not crucial.
    		In particular, the associated semimartingale characteristics are $P$-a.s. the same.
    		Hence, whenever the probability measure $P$ is fixed, we may consider $X$ as stochastic process on the augmented space $(\Omega,\mathcal{F}^P,\mathbb{F}^P_+,P)$ to avoid measurability issues.
    		\delete{\item \label{rem:closedness}
    			It is a priori not clear whether the defined sublinear expectations satisfy desirable properties.
    			For instance, we want to work with conditional expectations which satisfy the aggregation property as in the classical setting.
    			These properties are subsumed in the dynamic programming principle which we will establish in Section \ref{sec:Dynamic}.
    			For this, the closedness of $\Theta$ will be crucial.}
    		\item 
        	Another approach for defining affine processes under parameter uncertainty is to consider the set of all probability measures $P$ such that $X$ is a $(P$,$\mathbb{F})$-semimartingale with differential characteristics $\theta(X)$ for some $\theta\in\Theta$.
        	That is, the family $\{\check{\mathcal{P}}_x(\Theta)\}_{x\in \textnormal{S}}$ given by
        	\begin{equation}\label{eq:parameter-uncertainty}
        		\check{\mathcal{P}}_x(\Theta)=\Big\{P\in\mathfrak{P}_{sem}^{ac}(\Omega)\,:\,P(X_0=x)=1;\:\exists\, {\theta\in\Theta}:\; \theta(X)=(b^P\!, a^P\!, k^P)\text{ $dt\otimes dP$-a.e.}\Big\}.
        	\end{equation}
    	\end{enumerate}
    \end{rem}
    
    \noindent We now study the behaviour outside of the state space $\textnormal{S}$ of the non-linear affine process $(X, \{\mathcal{P}_x(\Theta)\}_{x \in \textnormal{S}},\textnormal{S})$.
    For this purpose, we define the random times
    \begin{equation}\label{eq:exit-entrance-times}
    	\sigma:=\inf\left\{t> 0\,:\,X_t\notin\textnormal{S}\right\},
    	\qquad
    	\tau:=\inf\left\{t>\sigma\,:\,X_t\in\textnormal{S}\right\}.
    \end{equation}
    For every $x\in\mathbb{R}^d$, $P\in\mathcal{P}_x(\Theta)$, $\sigma$ and $\tau$ are $\mathbb{F}^P_+$-stopping times by the Début Theorem, cf. \cite[Theorem I.1.27]{jacod_limit_2003}.
    Note that, if $x\notin\textnormal{S}$, then $\sigma=0$ $P$-a.s. for all $P\in\mathcal{P}_x(\Theta)$.
    	
    \begin{lem}\label{lem:jump-exit}
    	For all $x\in\textnormal{S}$, $P\in\mathcal{P}_x(\Theta)$, the set
    	\begin{equation}
    		\left\{\sigma < \infty,\; X_\sigma\in\textnormal{S}\right\}
        \end{equation}
    	is $P$-null.
    \end{lem}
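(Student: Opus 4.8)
\emph{Proof strategy.} The key structural fact is that the parameter map degenerates off the state space: since each of $\beta(\cdot),\alpha(\cdot),\nu(\cdot)$ in \eqref{eq:theta1}--\eqref{eq:theta3} carries the factor $\mathbf{1}_{\textnormal{S}}$, one has $\Theta(y)=\{(0,0,0)\}$ for every $y\in\textnormal{S}^c$, so by \eqref{eq:NLAJD} the differential characteristics of $X$ satisfy $(b^P_s,a^P_s,k^P_s)=(0,0,0)$ for $dt\otimes dP$-a.e.\ $(s,\omega)$ with $X_s(\omega)\in\textnormal{S}^c$. Fix $x\in\textnormal{S}$ and $P\in\mathcal{P}_x(\Theta)$; by Remark \ref{rem:filtration-closedness} we may pass to the augmented stochastic basis, on which $X$ retains these characteristics and hitting times of open or closed sets are stopping times. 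The plan is: (a) to show $X$ cannot jump out of $\textnormal{S}^c$; (b) to deduce that $X$ is frozen from the moment it enters $\textnormal{S}^c$; and (c) to derive a contradiction from a hypothetical continuous exit of $\textnormal{S}$ at time $\sigma$.

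For (a) I claim that $P$-a.s.\ $\Delta X_s=0$ whenever $X_{s-}\in\textnormal{S}^c$. For $\epsilon>0$, the process $N^\epsilon_t:=\sum_{s\le t}\mathbf{1}_{\{X_{s-}\in\textnormal{S}^c,\,\|\Delta X_s\|>\epsilon\}}$ equals $(W^\epsilon\star\mu)_t$ for the predictable integrand $W^\epsilon(s,z):=\mathbf{1}_{\{X_{s-}\in\textnormal{S}^c\}}\mathbf{1}_{\{\|z\|>\epsilon\}}$, hence by the defining property of the compensator $K^P$ we get $E[N^\epsilon_t]=E[(W^\epsilon\star K^P)_t]=E[\int_0^t\mathbf{1}_{\{X_{s-}\in\textnormal{S}^c\}}\,k^P_s(\{\|z\|>\epsilon\})\,ds]$. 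Since the jump times of $X$ form a Lebesgue-null set, $X_{s-}=X_s$ for $ds$-a.e.\ $s$, while $k^P_s=0$ on $\{X_s\in\textnormal{S}^c\}$; therefore $E[N^\epsilon_t]=0$, so $N^\epsilon\equiv0$ $P$-a.s., and letting $\epsilon\downarrow0$ along a sequence gives the claim. Combined with the vanishing of $a^P$ and $b^P$ on $\{X_s\in\textnormal{S}^c\}$ and the canonical decomposition \eqref{eq:CanonicalDecomposition}, this shows that integrating every term of the decomposition against $H_s:=\mathbf{1}_{\{X_{s-}\in\textnormal{S}^c\}}$ produces zero --- the continuous parts because their densities vanish off the null set of jump times, the jump parts because $H_s\Delta X_s=0$ and the matching compensators vanish by the same reasoning. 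Hence $H\cdot X\equiv0$ $P$-a.s., i.e.\ $X_t=X_0+\int_0^t\mathbf{1}_{\{X_{s-}\in\textnormal{S}\}}\,dX_s$ for all $t\ge0$.

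Fix a set $\Omega_0$ of full $P$-measure on which (a) and this representation hold; I argue the remaining steps pathwise on $\Omega_0$. For (b), first note that whenever $X_t\in\textnormal{S}^c$, the path is constant on a right neighbourhood of $t$: choosing $\delta>0$ with $\overline{B(X_t,\delta)}\subseteq\textnormal{S}^c$ and, by right-continuity, $\eta>0$ with $X_s\in B(X_t,\delta)$ on $[t,t+\eta)$, we obtain $X_{s-}\in\overline{B(X_t,\delta)}\subseteq\textnormal{S}^c$ for $s\in(t,t+\eta)$, so the representation forces $X\equiv X_t$ on $[t,t+\eta)$. Now fix $t_0$ with $X_{t_0}\in\textnormal{S}^c$ and set $\varrho:=\sup\{u\ge t_0: X\equiv X_{t_0}\text{ on }[t_0,u]\}$; by the previous observation $\varrho>t_0$, and $X_{\varrho-}=X_{t_0}\in\textnormal{S}^c$ together with (a) gives $\Delta X_\varrho=0$, hence $X\equiv X_{t_0}$ on $[t_0,\varrho]$; if $\varrho<\infty$, applying the observation at $\varrho$ extends constancy beyond $\varrho$, contradicting maximality --- so $\varrho=\infty$ and $X\equiv X_{t_0}$ on $[t_0,\infty)$. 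For (c), suppose $P(A)>0$ with $A:=\{\sigma<\infty,\,X_\sigma\in\textnormal{S}\}$ and pick $\omega\in A\cap\Omega_0$. Since $X_\sigma\in\textnormal{S}$, the infimum $\sigma=\inf\{t>0:X_t\in\textnormal{S}^c\}$ is not attained, so by right-continuity and openness of $\textnormal{S}^c$ there is a strictly decreasing sequence $t_n\downarrow\sigma$ with $t_n>\sigma$ and $X_{t_n}\in\textnormal{S}^c$. By (b) the path is constant on each $[t_n,\infty)$, so the nested values $X_{t_n}$ all equal a single point $y\in\textnormal{S}^c$; but right-continuity gives $X_{t_n}\to X_\sigma$, forcing $X_\sigma=y\in\textnormal{S}^c$, which contradicts $X_\sigma\in\textnormal{S}$. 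Hence $A\cap\Omega_0=\emptyset$ and $P(A)=0$.

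I expect step (b) --- the freezing property --- to be the main obstacle: it requires controlling all terms of the canonical decomposition simultaneously and, decisively, using the \emph{absolute continuity} of the characteristics (built into $\mathfrak{P}_{sem}^{ac}(\Omega)$) to exclude a jump back into $\textnormal{S}$ exactly when the frozen path would otherwise have to leave $\textnormal{S}^c$. The remaining ingredients --- the degeneracy of $\Theta(\cdot)$ off $\textnormal{S}$, càdlàg regularity of paths, and closedness of $\textnormal{S}$ --- are routine.
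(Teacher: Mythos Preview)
Your proof is correct and takes a genuinely different route from the paper's. The paper argues directly with the increment: it writes $X_{\sigma+\epsilon}-X_\sigma$ via the canonical decomposition (with a representing Brownian motion), fixes $\omega$ in the bad set, picks $\epsilon_*$ small enough that $\sigma+\epsilon_*<\tau$ \emph{and} the path is continuous on $[\sigma,\sigma+\epsilon_*]$, and then observes that all characteristics vanish on $]\!]\sigma,\tau[\![$, forcing the increment to be zero --- contradicting $X_\sigma\in\textnormal{S}$, $X_{\sigma+\epsilon_*}\in\textnormal{S}^c$. The bad set is then covered by a countable union of such exceptional null sets.

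You instead establish the freezing property (essentially Corollary~\ref{cor:constant-outside}) first, via a clean compensator computation ruling out jumps from $\textnormal{S}^c$, and deduce the lemma from it by a sequential right-limit argument. This buys robustness: the paper's step of choosing $\epsilon_*$ so that the c\`adl\`ag path is \emph{continuous} on $[\sigma,\sigma+\epsilon_*]$ is delicate (right-continuity alone does not preclude jumps accumulating from the right), and the companion claim $\sigma+\epsilon_*<\tau$ is likewise not immediate if the path could oscillate in and out of $\textnormal{S}$ near $\sigma$; your compensator argument in~(a) sidesteps both issues. The cost is that you do more work up front --- you essentially prove Lemmas~\ref{lem:jump-exit}--\ref{lem:tau-infinite} and Corollary~\ref{cor:constant-outside} in one stroke --- and your pathwise deduction in~(b) (``the representation forces $X\equiv X_t$ on $[t,t+\eta)$'') tacitly uses that a continuous, respectively purely discontinuous, local martingale is pathwise constant on every interval where its quadratic variation is flat; this is standard (e.g.\ via Dambis--Dubins--Schwarz for the continuous part and the jump description of $[{}^j\!M^P]$ for the discontinuous part) but worth stating explicitly so that the argument genuinely lives on the single full-measure set $\Omega_0$.
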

    
    \begin{proof}
    	Fix $x\in\textnormal{S}$ with $\mathcal{P}_x(\Theta)\neq\emptyset$, and let $P\in\mathcal{P}_x(\Theta)$.
    	Then, for any $\epsilon >0$ we have $P$-a.s.
    	\begin{align}\label{eq:canonical-1}
    		\left( X_{\sigma+\epsilon}-X_\sigma \right)\,\mathbf{1}_{\{\sigma<\infty\}}
    		&=\bigg( \int_\sigma^{\sigma+\epsilon}\tilde{b}^P_s\,ds+\int_\sigma^{\sigma+\epsilon}\tilde{a}^P_s\,dW^P_s + \int_\sigma^{\sigma+\epsilon} \int_{\mathbb{R}^d} h(z) \Big(\mu(ds,dz)-\tilde{k}^P_s(dz)\,ds\Big) \nonumber \\
    		&\qquad  + \int_\sigma^{\sigma+\epsilon} \int_{\mathbb{R}^d} \left(z- h(z)\right) \,\mu(ds,dz) \bigg)\,\mathbf{1}_{\{\sigma<\infty\}}
    	\end{align}
    	for some $P$-Wiener process $W^P$ and for $(\tilde{b}^P,\tilde{a}^P,\tilde{k}^P)$ taking values in $\Theta(X)$ for all $(t,\omega)\in\mathbb{R}_+\times\Omega$.
    
    	Let $\omega\in \left\{\sigma < \infty,\; X_\sigma\in\textnormal{S}\right\}$,
    	then there exists an $\epsilon_*>0$ with $\sigma(\omega)+\epsilon_*<\tau(\omega)$.
    	Due to the right-continuity of $X(\omega)$, we can choose $\epsilon_*$ small enough such that $u\mapsto X_u(\omega)$ is continuous on $[\sigma(\omega),\sigma(\omega)+\epsilon_*]$.		
    	Since $\Theta(X)=\{(0,0,0)\}$ on $]\!]\sigma,\tau[\![$, we have in particular $(\tilde{b}^P_s(\omega),\tilde{a}^P_s(\omega),\tilde{k}^P_s(\omega))=(0,0,0)$ for $s\in]\sigma(\omega),\sigma(\omega)+\epsilon_*[$ by our choice of $\omega$ and $\epsilon_*$.
    	Moreover, we have $\mu([\sigma(\omega),\sigma(\omega)+\epsilon_*],\mathbb{R}^d)=0$ since the path is continuous on $[\sigma(\omega),\sigma(\omega)+\epsilon_*]$.
    	Hence, the right-hand side of \eqref{eq:canonical-1} is zero for $\omega$ and $\epsilon_*$.
    	Note that the left-hand side of \eqref{eq:canonical-1} is not zero since $X_{\sigma(\omega)}(\omega)\in\textnormal{S}$ and $X_{\sigma(\omega)+\epsilon_*}(\omega)\notin\textnormal{S}$.
    	Thus, $\omega$ is contained in the $P$-null set on which \eqref{eq:canonical-1} does not hold for $\epsilon=\epsilon_*$.
    	We conclude that
    	\begin{equation}
    		\left\{\sigma < \infty,\; X_\sigma\in\textnormal{S}\right\} \subseteq \bigcup_{n\in\mathbb{N}} \left\{ \eqref{eq:canonical-1} \text{ does not hold for }\epsilon=\frac{1}{n} \right\},
    	\end{equation}
    	which as countable union of $P$-null sets is again $P$-null.
    \end{proof}
    	
    \begin{lem}\label{lem:tau-infinite}
        For all $x\in\mathbb{R}^d$ and $P\in\mathcal{P}_x(\Theta)$, the set
        \begin{equation}
            \left\{\tau<\infty\right\}
        \end{equation}
        is $P$-null.
    \end{lem}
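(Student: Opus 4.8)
The plan is a direct argument exploiting that the characteristics of $X$ vanish whenever $X$ leaves $\textnormal{S}$, so that $X$ cannot move while it is away from the state space. Fix $x\in\mathbb{R}^d$ with $\mathcal{P}_x(\Theta)\neq\emptyset$ and let $P\in\mathcal{P}_x(\Theta)$. Since $\tau\geq\sigma$ we have $\{\tau<\infty\}\subseteq\{\sigma<\infty\}$. First I would use Lemma~\ref{lem:jump-exit} --- and, for $x\notin\textnormal{S}$, the elementary fact that then $\sigma=0$ and $X_\sigma=X_0=x\notin\textnormal{S}$ $P$-a.s. --- to deduce that $X_\sigma\notin\textnormal{S}$ holds $P$-a.s.\ on $\{\sigma<\infty\}$. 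Because $\textnormal{S}^{c}$ is open and $X$ is right-continuous, $X_\sigma\notin\textnormal{S}$ forces $X_t\notin\textnormal{S}$ for all $t$ in a right-neighbourhood $[\sigma,\sigma+\delta)$ of $\sigma$, so that $\sigma<\tau$; moreover $X_t\notin\textnormal{S}$ for every $t\in(\sigma,\tau)$ straight from the definition of $\tau$.

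The core step is to show that $X$ is $P$-a.s.\ constant on $[\sigma,\tau]$ on the event $\{\tau<\infty\}$. By \eqref{eq:theta1}--\eqref{eq:theta3} one has $\theta(y)=(0,0,0)$ for every $\theta\in\Theta$ and every $y\notin\textnormal{S}$, hence $\Theta(X_t)=\{(0,0,0)\}$ for $t\in(\sigma,\tau)$; since $(b^P,a^P,k^P)\in\Theta(X)$ holds $dt\otimes dP$-a.e.\ and the graph of $\tau$ is Lebesgue-null in the time variable, it follows that $(b^P_t,a^P_t,k^P_t)=(0,0,0)$ for $dt\otimes dP$-a.e.\ $(t,\omega)$ with $\sigma(\omega)<t\leq\tau(\omega)$. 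Feeding this into the canonical decomposition \eqref{eq:CanonicalDecomposition} on the predictable stochastic interval $]\!]\sigma,\tau]\!]$ I would treat the pieces one by one: the predictable finite-variation part satisfies $B^P_\tau-B^P_\sigma=\int_\sigma^\tau b^P_s\,ds=0$; the continuous local martingale part satisfies $[\,^c\!M^P]_\tau-[\,^c\!M^P]_\sigma=\int_\sigma^\tau a^P_s\,ds=0$ and is therefore constant on $[\sigma,\tau]$; and, using that $\mathbf{1}_{]\!]\sigma,\tau]\!]}$ is predictable together with the defining property of the compensator $K^P$, $E_P\big[\mu(]\!]\sigma,\tau]\!]\times B)\big]=E_P\big[\int_\sigma^\tau k^P_s(B)\,ds\big]=0$ for every Borel set $B$ bounded away from the origin, whence $\mu(]\!]\sigma,\tau]\!]\times B)=0$ $P$-a.s.; exhausting $\mathbb{R}^d\setminus\{0\}$ by countably many such sets yields $\mu(]\!]\sigma,\tau]\!]\times\mathbb{R}^d)=0$ $P$-a.s., i.e.\ $X$ has no jump in $(\sigma,\tau]$. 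Putting these together in \eqref{eq:CanonicalDecomposition} gives $X_t-X_s=0$ for all $\sigma<s\leq t\leq\tau$, and letting $s\downarrow\sigma$ by right-continuity, $X_\tau=X_\sigma$ $P$-a.s.\ on $\{\tau<\infty\}$; in particular $X_\tau\notin\textnormal{S}$ there.

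On the other hand, on $\{\tau<\infty\}$ the set $\{t>\sigma:X_t\in\textnormal{S}\}$ is non-empty with infimum $\tau$, so either $\tau$ lies in this set or it is approached from the right by a sequence $t_n\downarrow\tau$ with $X_{t_n}\in\textnormal{S}$; since $\textnormal{S}$ is closed and $X$ is right-continuous, $X_\tau\in\textnormal{S}$ in both cases, deterministically. Comparing with the previous paragraph, $X_\tau\in\textnormal{S}$ everywhere on $\{\tau<\infty\}$ while $X_\tau\notin\textnormal{S}$ $P$-a.s.\ on $\{\tau<\infty\}$, so this event is $P$-null.

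I expect the main obstacle to be the jump part of the core step: one must remember that $k^P_s$ vanishes only for Lebesgue-a.e.\ $s$ (so the endpoint $\tau$ and any jump times of $X$ inside $(\sigma,\tau)$ have to be absorbed into the $dt\otimes dP$-null set), one must handle $\sigma$ and $\tau$ correctly as $\mathbb{F}^P_+$-stopping times on the augmented space so that $]\!]\sigma,\tau]\!]$ is a predictable set, and one must justify the passage from ``$\mu$ charges no set $]\!]\sigma,\tau]\!]\times B$ with $B$ bounded away from $0$'' to ``$X$ has no jump whatsoever in $(\sigma,\tau]$''. The topological facts about $\textnormal{S}$ and the remaining manipulations of \eqref{eq:CanonicalDecomposition} are routine.
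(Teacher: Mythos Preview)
Your proposal is correct and follows essentially the same approach as the paper: use Lemma~\ref{lem:jump-exit} (together with the trivial case $x\notin\textnormal{S}$) to obtain $X_\sigma\notin\textnormal{S}$ $P$-a.s.\ on $\{\sigma<\infty\}$, then exploit that the differential characteristics vanish on $]\!]\sigma,\tau[\![$ to deduce via the canonical decomposition that $X_\tau=X_\sigma$ $P$-a.s.\ on $\{\tau<\infty\}$, and finally contrast this with the deterministic fact that $X_\tau\in\textnormal{S}$ on $\{\tau<\infty\}$. The only cosmetic differences are that the paper organises the argument as a two-case split on $\{X_\sigma=X_\tau\}$ versus $\{X_\sigma\neq X_\tau\}$ and uses the single test function $\|z\|^2\wedge 1$ instead of your exhaustion by sets bounded away from the origin to kill the jumps.
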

    	
    \begin{proof}
        Fix $x\in\mathbb{R}^d$ with $\mathcal{P}_x(\Theta)\neq\emptyset$, and let $P\in\mathcal{P}_x(\Theta)$. Note that $\sigma\leq\tau$, and $X_\tau\in\textnormal{S}$ on $\{\tau<\infty\}$.
        Thus,
        \begin{equation}
    	   \left\{\tau<\infty,\;X_\sigma=X_\tau\right\} \subseteq\{\sigma<\infty, X_\sigma\in\textnormal{S}\}.
        \end{equation}
    	If $x\in\textnormal{S}$, the right-hand side is $P$-null by Lemma \ref{lem:jump-exit}.
        Otherwise for $x\notin\textnormal{S}$, we have $X_\sigma=X_0=x\notin\textnormal{S}$ $P$-a.s., and the right-hand side is $P$-null.
    
        Now, consider the set $\{\tau<\infty,\;X_\tau\neq X_\sigma \}$.
        As in the proof of Lemma \ref{lem:jump-exit}, fix a process $(b,a,k)$ taking values in $\Theta(X)$ such that $P$-a.s.
    	    \begin{align}\label{eq:canonical-2}
    			\left(X_{\tau}-X_\sigma\right)\,\mathbf{1}_{\{\tau<\infty\}}
    			&=\bigg( \int_\sigma^{\tau}b(s)\,ds+\int_\sigma^{\tau}a(s)\,dW^P_s \nonumber \\
    			&\quad + \int_\sigma^{\tau} \int_{\mathbb{R}^d} h(z) \Big(\mu(ds,dz)-k(s;dz)\,ds\Big) + \int_\sigma^{\tau} \int_{\mathbb{R}^d} \left(z- h(z)\right) \,\mu(ds,dz) \bigg)\,\mathbf{1}_{\{\tau<\infty\}}
    		\end{align}
    		for some $P$-Wiener process $W^P$.
    		We have 
    		\begin{align*}
    		    E_P\left[ \int_\sigma^\tau\int_{\mathbb{R}^d}\left(\|z\|^2\wedge 1\right)\,\mu(ds,dz)\,\mathbf{1}_{\{\tau<\infty\}} \right]
    		    = E_P\left[ \int_\sigma^\tau\int_{\mathbb{R}^d}\left(\|z\|^2\wedge 1\right)\,k(s;dz)\,ds\,\mathbf{1}_{\{\tau<\infty\}} \right] 
    		    =0
    		\end{align*}
    		since $\Theta(X)=\{(0,0,0)\}$ on $]\!]\sigma,\tau[\![$ and the boundary $\lbrace \sigma, \tau \rbrace$ has Lebesgue measure zero. Consequently, the right-hand side of \eqref{eq:canonical-2} is zero $P$-a.s.
    		That is, $X_\tau=X_\sigma$ $P$-a.s. and $\left\{\tau<\infty,\;X_\sigma\neq X_\tau\right\}$ is $P$-null.
    		Hence, $\{\tau<\infty\}=\left\{\tau<\infty,\;X_\sigma=X_\tau\right\}\cup \left\{\tau<\infty,\;X_\sigma\neq X_\tau\right\}$ is also $P$-null.
    	\end{proof}
    	
    	As immediate consequence we obtain the following corollaries.
    	
    	\begin{cor}\label{cor:constant-outside}
    	    For all $x\in\mathbb{R}^d$ and $P\in\mathcal{P}_x(\Theta)$, $X$ is constant $P$-a.s. on $[\![\sigma,\infty[\![$.
    	\end{cor}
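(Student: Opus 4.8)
The plan is to read this off from Lemma~\ref{lem:tau-infinite} together with the stochastic-calculus bookkeeping already used in the proofs of Lemmas~\ref{lem:jump-exit} and~\ref{lem:tau-infinite}. Fix $x\in\mathbb{R}^d$ with $\mathcal{P}_x(\Theta)\neq\emptyset$ (otherwise there is nothing to prove) and let $P\in\mathcal{P}_x(\Theta)$; on $\{\sigma=\infty\}$ the claim is vacuous, so it suffices to argue on $\{\sigma<\infty\}$.

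First I would invoke Lemma~\ref{lem:tau-infinite}: since $\{\tau<\infty\}$ is $P$-null, $P$-a.s.\ the stochastic interval $]\!]\sigma,\tau[\![$ coincides with $]\!]\sigma,\infty[\![$. By the definitions of $\sigma$ and $\tau$ in \eqref{eq:exit-entrance-times} we have $X_t\notin\textnormal{S}$ on $]\!]\sigma,\tau[\![$, and since $\beta(\cdot),\alpha(\cdot),\nu(\cdot)$ are multiplied by $\mathbf{1}_{\textnormal{S}}$ in \eqref{eq:theta1}--\eqref{eq:theta3}, this forces $\Theta(X)=\{(0,0,0)\}$ and hence $(b^P,a^P,k^P)=(0,0,0)$, $dt\otimes dP$-a.e., on $]\!]\sigma,\infty[\![$.

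Next, for fixed $t\ge 0$ I would evaluate the canonical decomposition \eqref{eq:CanonicalDecomposition} on the interval $[\![\sigma,\sigma+t]\!]$, exactly as in \eqref{eq:canonical-1} and \eqref{eq:canonical-2}: on $\{\sigma<\infty\}$ the drift $\int_\sigma^{\sigma+t}b^P_s\,ds$ vanishes because $b^P=0$ after $\sigma$; the continuous local martingale part of $X$ has quadratic variation $\int_\sigma^{\sigma+t}a^P_s\,ds=0$ on $]\!]\sigma,\sigma+t]\!]$, hence is $P$-a.s.\ constant there; and the same expectation computation as in the proof of Lemma~\ref{lem:tau-infinite}, now with $]\!]\sigma,\sigma+t]\!]$ in place of $]\!]\sigma,\tau]\!]$ and using that $\{\sigma\}$ has Lebesgue measure zero, gives $E_P[\int_\sigma^{\sigma+t}\int_{\mathbb{R}^d}(\|z\|^2\wedge 1)\,\mu(ds,dz)\,\mathbf{1}_{\{\sigma<\infty\}}]=0$, so that the two remaining (compensated and non-compensated) jump terms vanish $P$-a.s.\ as well. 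Thus $X_{\sigma+t}=X_\sigma$ $P$-a.s.\ on $\{\sigma<\infty\}$ for each fixed $t\ge 0$; running $t$ through $\mathbb{Q}_{\ge 0}$ and using the right-continuity of the paths of $X$ then yields that $X$ is $P$-a.s.\ constant on $[\![\sigma,\infty[\![$.

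The main obstacle is purely the measure-theoretic bookkeeping around the time $\sigma$ itself, which must be kept separate from the interval $]\!]\sigma,\infty[\![$: a jump of $X$ out of $\textnormal{S}$ is permitted exactly at $\sigma$ (so $X_\sigma$ need not lie in $\textnormal{S}$, consistently with Lemma~\ref{lem:jump-exit}), and this is why one only controls the characteristics on the open interval and argues via $[\![\sigma,\sigma+t]\!]$. Evaluating the canonical decomposition on this random interval is legitimate because $\sigma$, and hence $\sigma+t$, is an $\mathbb{F}^P_+$-stopping time by the Début theorem, as recorded before \eqref{eq:exit-entrance-times}; everything else is identical to the arguments already given for Lemmas~\ref{lem:jump-exit} and~\ref{lem:tau-infinite}.
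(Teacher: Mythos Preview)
Your proof is correct and takes essentially the same approach as the paper: the paper's own proof merely says the result follows from a close inspection of \eqref{eq:canonical-2} together with Lemma~\ref{lem:tau-infinite}, and you have spelled out exactly that---use $\tau=\infty$ $P$-a.s.\ so that the characteristics vanish on $]\!]\sigma,\sigma+t]\!]$, read off $X_{\sigma+t}=X_\sigma$ from the decomposition as in \eqref{eq:canonical-2}, and pass from rational $t$ to all $t\ge 0$ via right-continuity. Your explicit handling of the possible jump at time $\sigma$ and of the countable-$t$/right-continuity step adds clarity that the paper leaves implicit.
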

    	\begin{proof}
    	    This follows immediately from a close inspection of \eqref{eq:canonical-2} and Lemma \ref{lem:tau-infinite}.
    	\end{proof}
    	\begin{rem}
    	By Lemmas \ref{lem:jump-exit}, \ref{lem:tau-infinite} and Corollary \ref{cor:constant-outside} we can conclude if the process $X$ exits the set $\textnormal{S}$, it will never reenter $\textnormal{S}$.
    	Furthermore, the process $X$ can only exit the set $\textnormal{S}$ through a jump. 
    	\end{rem}
    	\begin{cor}\label{cor:trivial_uncertainty_subset}
    		Let $x\in\mathbb{R}^d\setminus\textnormal{S}$.
    		Then $\mathcal{P}_x(\Theta)\neq \emptyset$, and for all $P\in\mathcal{P}_x(\Theta)$ and $t\geq 0$, we have $X_t=x$ $P$-a.s.
    	\end{cor}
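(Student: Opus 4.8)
The plan is to verify directly that the Dirac measure $\delta_x$ concentrated on the constant path $\omega \equiv x$ belongs to $\mathcal{P}_x(\Theta)$, which simultaneously shows $\mathcal{P}_x(\Theta) \neq \emptyset$, and then to use Corollary \ref{cor:constant-outside} to pin down the behaviour of an arbitrary $P \in \mathcal{P}_x(\Theta)$.

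First I would address non-emptiness. Since $x \notin \textnormal{S}$, all three affine functions $\beta(\cdot), \alpha(\cdot), \nu(\cdot)$ in \eqref{eq:theta1}--\eqref{eq:theta3} carry the factor $\mathbf{1}_{\textnormal{S}}$, so $\theta(x) = (0,0,0)$ for every $\theta \in \Theta$; hence $\Theta(x) = \{(0,0,0)\}$. Under $P = \delta_x$ the canonical process $X$ is $P$-a.s.\ the constant path $x$, which is trivially a semimartingale with vanishing canonical decomposition \eqref{eq:CanonicalDecomposition}: $M^P = B^P = J^P = 0$ and $\mu = 0$. Thus $B^P, [\,^c\!M^P], K^P$ are all identically zero, hence absolutely continuous with respect to Lebesgue measure, and the differential characteristics may be taken as $(b^P_t, a^P_t, k^P_t) = (0,0,0)$ for all $t$. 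Since $X_t = x$ $P$-a.s.\ lies outside $\textnormal{S}$, we have $\Theta(X_t) = \{(0,0,0)\}$ $dt \otimes dP$-a.e., so indeed $(b^P, a^P, k^P) \in \Theta(X)$ $dt\otimes dP$-a.e. Together with $P(X_0 = x) = 1$, this gives $\delta_x \in \mathcal{P}_x(\Theta)$, so $\mathcal{P}_x(\Theta) \neq \emptyset$.

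For the second claim, fix any $P \in \mathcal{P}_x(\Theta)$. Because $x \notin \textnormal{S}$, we have $\sigma = 0$ $P$-a.s., as already noted after \eqref{eq:exit-entrance-times}. By Corollary \ref{cor:constant-outside}, $X$ is constant $P$-a.s.\ on $[\![\sigma, \infty[\![ = [\![0,\infty[\![$, and since $X_0 = x$ $P$-a.s., this constant value is $x$. Hence $X_t = x$ $P$-a.s.\ for every $t \geq 0$.

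The only point requiring a little care is the application of Corollary \ref{cor:constant-outside}: its statement gives that $X$ is constant on $[\![\sigma,\infty[\![$, and one must observe that the constant value there coincides with $X_\sigma = X_0 = x$ when $\sigma = 0$, so no genuine obstacle arises. The construction of the Dirac measure as a concrete element of $\mathcal{P}_x(\Theta)$ is routine, using only that the zero characteristics are admissible precisely because $x \notin \textnormal{S}$ forces $\Theta(X) = \{(0,0,0)\}$ along the constant path.
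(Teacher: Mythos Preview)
Your proof is correct and follows essentially the same route as the paper: you exhibit the Dirac measure $\delta_x$ on the constant path to establish $\mathcal{P}_x(\Theta)\neq\emptyset$, and then invoke Corollary~\ref{cor:constant-outside} together with $\sigma=0$ $P$-a.s.\ to conclude $X_t=x$ $P$-a.s. The only difference is that you spell out in detail why $\delta_x$ has zero differential characteristics and why these lie in $\Theta(X)$, whereas the paper simply asserts this is ``clear''.
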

    	\begin{proof}
    	    Fix $x\in\mathbb{R}^d\setminus\textnormal{S}$.
    	    Let $\delta_x$ denote the Dirac measure on $\Omega$ with unit mass on the constant path $\omega\equiv x$.
    		Then clearly $\delta_x\in\mathcal{P}_x(\Theta)$, and hence $\mathcal{P}_x(\Theta)\neq \emptyset$.
    		The second statement follows from Corollary \ref{cor:constant-outside} by observing that $\sigma=0$ $P$-a.s. for all $P\in\mathcal{P}_x(\Theta)$.
    	\end{proof}
    
    	Corollary \ref{cor:trivial_uncertainty_subset} states that for all $x\notin\textnormal{S}$ and $P\in\mathcal{P}_x(\Theta)$ the process $X$ is a $P$-modification of the constant process with state $x$.
    	In particular, for $x\notin\textnormal{S}$, $t\geq0$, and any Borel-measurable function $f:\,\mathbb{R}^d\rightarrow{\mathbb{R}}$, we have
    	\begin{equation}
    		\mathcal{E}^x\left(f(X_t)\right)=\sup_{P\in\mathcal{P}_x(\Theta)} E_P[f(X_t)]=f(x).
    	\end{equation}
    	
    	\begin{rem}\label{rem:non-empty}
    		Note that $\Theta\neq\emptyset$ does not immediately imply $\mathcal{P}_x(\Theta)\neq \emptyset$ for all $x\in\mathbb{R}^d$.
    		In order to avoid difficulties in that direction, we will henceforth assume that $\mathcal{P}_x(\Theta)\neq\emptyset$ for all $x\in\textnormal{S}$.
    		This assumption is not particularly strong as it follows immediately for many common choices of $\textnormal{S}$ and $\Theta$.
    		For instance, for the canonical state space $\textnormal{S}=\mathbb{R}_+^m\times\mathbb{R}^{d-m}$, we have a one-to-one correspondence between admissible parameters in the sense of \cite[Definition 2.6]{duffie_affine_2003} and affine processes due to \cite[Theorem 2.7]{duffie_affine_2003}.
    		More precisely, if $\Theta$ contains some admissible parameters $(\beta,\alpha,\nu)$, then there exists a family $\check{\mathcal{P}}_x(\Theta)$ of probability measures $\{P^x\}_{x\in \textnormal{S}}$ such that $X$ together with $\{P^x\}_{x\in\textnormal{S}}$ is a linear affine process with affine characteristics $(\beta,\alpha,\nu)$.
    		In particular, this implies $\emptyset\neq\check{\mathcal{P}}_x(\Theta) \subseteq\mathcal{P}_x(\Theta)$ for all $x\in\textnormal{S}$.
    		Clearly, by Corollary \ref{cor:trivial_uncertainty_subset}, $\mathcal{P}_x(\Theta)\neq\emptyset$ for all $x\notin\textnormal{S}$.
    \end{rem}  
    
    \section{Dynamic Programming Principle and Markov Property}\label{sec:Dynamic}	
	
    In this section, we prove that the family of subsets $\{\mathcal{P}_x(\Theta)\}_{x\in\textnormal{S}}$, or rather the associated sublinear expectations $\{\mathcal{E}^x\}_{x\in\textnormal{S}}$, is amenable to the dynamic programming principle as formalised in Proposition \ref{prop:DynamicProgramming}.
    In particular, we show that, for each initial value $x$, there exists a family of conditional sublinear operators $\{\mathcal{E}^x_t\}_{t\geq 0}$ that satisfies the dynamic programming property.
    As an immediate consequence we obtain a strong Markov property in Lemma \ref{lem:SubMarkov}.
    
    We build on results from \cite{nutz_constructing_2013}, which have been extended in \cite{hollender_levy-type_2016}.
    Thus, we first introduce the notation and the main results of \cite[Chapter 4]{hollender_levy-type_2016}, which we use later.
    
    Let $\tilde{\omega},\omega\in\Omega$, and $\tau:\Omega \to \mathbb{R}_+$ be a finite random time.
	The concatenation of paths $\tilde{\omega}$ and $\omega$ at $\tau$ is defined as
	\begin{equation}
		(\tilde{\omega}\otimes_{\tau}\omega)(t)= 	\tilde{\omega}(t)\,\mathbf{1}_{\left[0,\tau(\tilde{\omega})\right[\,}(t) +\Big(\,\tilde{\omega}\big(\tau(\tilde{\omega})\big)-\omega(0)+\omega\big(t-\tau(\tilde{\omega})\big)\Big)\,\mathbf{1}_{\left[\tau(\tilde{\omega}),\infty\right[\,}(t),\quad t\geq 0.
	\end{equation} 
	Given a function $f$ on $\Omega$, define the function $f^{\tau,\tilde{\omega}}$ on $\Omega$ by
	\begin{equation} 
		f^{\tau,\tilde{\omega}}(\omega):=f(\tilde{\omega}\otimes_\tau \omega).
	\end{equation}
	Let $P\in\mathfrak{P}(\Omega)$ and $\tau$ be a finite $\mathbb{F}$-stopping time.
	By \cite[Theorem 33.3]{billingsley_probability_1995}, there exists a family of regular conditional probability measures $\left\{P^\omega_\tau\right\}_{\omega\in\Omega}$ given $\mathcal{F}_\tau$, which can be chosen such that $P^\omega_\tau\in\mathfrak{P}(\Omega)$ is concentrated on $\omega\otimes_\tau \Omega$.
	More precisely, for any $A\in\mathcal{F}$, the map $\omega\mapsto P^\omega_\tau(A)$ is $\mathcal{F}_\tau$-measurable,
	\begin{equation}
		E_{P^\omega_\tau}\left[f\right]=E_P\left[f\,|\,\mathcal{F}_\tau\right](\omega)\quad\text{for $P$-a.e. }\omega\in\Omega
	\end{equation}
	when $f$ is bounded and $\mathcal{F}$-measurable, and
	\begin{equation}\label{eq:rcpd1}
		P^\omega_\tau\left(\Big\{\omega'\in\Omega\,:\,\omega=\omega'\text{ on }[0,\tau(\omega)]\Big\}\right)=1\quad\text{for all }\omega\in\Omega.
	\end{equation}
	Define the probability measure $P^{\tau,\omega}\in\mathfrak{P}(\Omega)$ by
	\begin{equation}\label{eq:probm-shift}
		P^{\tau,\omega}(A):=P^\omega_\tau(\omega\otimes_\tau A),\quad A\in\mathcal{F},
	\end{equation}
	then
	\begin{equation}
		E_{P^{\tau,\omega}}\left[f^{\tau,\omega}\right]=E_{P^\omega_\tau}\left[f\right]=E_P\left[f\,|\,\mathcal{F}_\tau\right](\omega)\quad\text{for $P$-a.e. }\omega\in\Omega.
	\end{equation}
	
	For $x\in\mathbb{R}^d$, let $\Omega_x\subseteq\Omega$ be the subspace of all paths starting at $x$.
	Moreover, let $\mathfrak{P}(\Omega_x)$ denote the space of all probability measures on $\Omega_x$.
	In the following, we consider a family $\{\mathcal{P}_x(t,\omega)\}_{(t,\omega)\in\mathbb{R}_+\times\Omega_x}$ of subsets of $\mathfrak{P}(\Omega_x)$ for a fixed $x\in\mathbb{R}^d$.
	\begin{con}\label{con:A}
	    For all $s\geq0$, finite $\mathbb{F}$-stopping times $s\leq \tau$, paths $\tilde{\omega}\in\Omega_x$, probability measures $P\in\mathcal{P}_x(s,\tilde{\omega})$ and $\sigma:=\tau^{s,\tilde{\omega}} -s$, the following holds.\newline
			\begin{longtable}{p{1cm}p{14cm}}
				\emph{(i)} & \emph{Adaptedness}: If $\tilde{\omega}=\omega$ on $[0,t]$, then $\mathcal{P}_x(t,\tilde{\omega})=\mathcal{P}_x(t,\omega)$;\\
				\emph{(ii)} & \emph{Measurability}: The graph
				\begin{equation}
						\left\{(P',\omega')\,:\,\omega'\in\Omega_x,\,P'\in\mathcal{P}_x(\tau(\omega),\omega')\right\}\subseteq\mathfrak{P}(\Omega_x)\times\Omega_x
				\end{equation} is analytic;\\
				\emph{(iii)} & \emph{Invariance}: $P^{\sigma,\omega}\in\mathcal{P}_x(\tau(\tilde{\omega}\otimes_s\omega),\tilde{\omega}\otimes_s\omega)$ for $P$-a.e. $\omega\in\Omega_x$;\\
				\emph{(iv)} & \emph{Stability}: If $\nu:\,\Omega_x\rightarrow\mathfrak{P}(\Omega_x)$ is an $\mathcal{F}_{\sigma}$-measurable kernel with
				\begin{equation}
					\nu(\omega)\in\mathcal{P}_x(\tau(\tilde{\omega}\otimes_s\omega),\tilde{\omega}\otimes_s\omega)\qquad\text{for $P$-a.e. $\omega\in\Omega_x$,}
				\end{equation} then 
				the measure $\bar{P}$ defined by
				\begin{equation}
					\bar{P}(A):=\int_{\Omega_{x}}\int_{\Omega_{x}} (\mathbf{1}_A)^{\sigma,\omega}(\omega')\,\nu(\omega;d\omega')\,P(d\omega)\qquad\text{for all $A\in\mathcal{F}$}
				\end{equation}
				is an element of $\mathcal{P}_x(s,\tilde{\omega})$.
			\end{longtable} 
	\end{con}
	If Condition \ref{con:A} holds, we set $\mathcal{P}_x:=\mathcal{P}_x(0,\omega)$ since $\mathcal{P}_x(0,\omega)$ is independent of $\omega$ due to Condition \ref{con:A} (i).
	We now state the relevant results from \cite{hollender_levy-type_2016}, cf. Theorem 4.29 and Lemma 4.30.
	
	\begin{thm}\label{thm:Hollender4.29}
		Suppose that $\{\mathcal{P}_x(t,\omega)\}_{(t,\omega)\in\mathbb{R}_+\times\Omega_x}$ is a family which satisfies Condition \ref{con:A}.
		Then for any finite $\mathbb{F}$-stopping time $\tau$ and upper semianalytic function $f:\,\Omega_x\rightarrow\bar{\mathbb{R}}$, the function
		\begin{equation}\label{eq:hollender-sublinear-expectation}
		     \mathcal{E}^x_\tau(f)(\omega):= \sup_{P\in\mathcal{P}_x(\tau(\omega),\omega)}E_P\left[f^{\tau,\omega}\right]
		\end{equation}
		is upper semianalytic and $\mathcal{F}_\tau^*$-measurable.
		Moreover, for finite stopping $\mathbb{F}$-stopping times $\sigma<\tau$, it holds
		\begin{equation} \label{eq:Aggregation}
			\mathcal{E}^x_\sigma 	\left(\mathcal{E}^x_\tau(f)\right)(\omega)=\mathcal{E}^x_\sigma(f)(\omega)
		\end{equation}
	    for all $\omega\in\Omega_x$.
		
    	Further, if for each $x\in\mathbb{R}^d$, the family $\{\mathcal{P}_x(t,\omega)\}_{(t,\omega)\in\mathbb{R}_+\times\Omega_x}$ 
		satisfies Condition \ref{con:A}., and the graph of $x\mapsto\mathcal{P}_x$, i.e., the set
		\begin{equation} \label{eq:SetAnalytic}
			\Big\{(x,P)\in\mathbb{R}^d\times\mathfrak{P}(\Omega)\,:\,x\in\mathbb{R}^d\text{ and }P|_{\Omega_x}\in\mathcal{P}_x\Big\},
		\end{equation}
		is analytic, then $x\mapsto\mathcal{E}_0^x(f):=\mathcal{E}_0^x(f|_{\Omega_x})$, where $\mathcal{E}^x_0$ is defined as in \eqref{eq:hollender-sublinear-expectation}, is upper semianalytic for every upper semianalytic function $f.$
	\end{thm}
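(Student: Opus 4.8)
The plan is to reconstruct the standard measurable–selection proof of dynamic programming for sublinear expectations, in the spirit of \cite{nutz_constructing_2013}, transported from deterministic to stopping times by means of the concatenation operators $\tilde\omega\otimes_\tau\omega$, the shift operations $f^{\tau,\omega}$, and the shifted regular conditional probabilities $P^{\tau,\omega}$ introduced above. The three building blocks are: (a) upper semianalyticity and $\mathcal{F}_\tau^*$-measurability of $\mathcal{E}^x_\tau(f)$, which must come \emph{first}, since the selection argument later relies on it; (b) the two inequalities making up \eqref{eq:Aggregation}, one fed by the invariance axiom and one by the stability axiom; (c) the joint statement in $x$, obtained by the same projection argument on $\mathbb{R}^d\times\mathfrak{P}(\Omega)$.

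\emph{Regularity of $\mathcal{E}^x_\tau(f)$.} The map $g\mapsto(P\mapsto E_P[g])$ is Borel (and preserves upper semianalyticity), the concatenation map $(\tilde\omega,\omega)\mapsto\tilde\omega\otimes_\tau\omega$ is Borel, and upper semianalyticity is stable under composition and under integration; hence $(P,\omega)\mapsto E_P[f^{\tau,\omega}]$ is upper semianalytic on $\mathfrak{P}(\Omega_x)\times\Omega_x$ for upper semianalytic $f$. By Condition~\ref{con:A}(ii) the graph $\{(P,\omega):P\in\mathcal{P}_x(\tau(\omega),\omega)\}$ is analytic, so
\[
 \mathcal{E}^x_\tau(f)(\omega)=\sup\{E_P[f^{\tau,\omega}]:P\in\mathcal{P}_x(\tau(\omega),\omega)\}
\]
is a projection of an upper semianalytic function restricted to an analytic set, hence upper semianalytic by the projection theorem for analytic sets; Condition~\ref{con:A}(i) makes it depend on $\omega$ only through $\omega|_{[0,\tau(\omega)]}$, which gives $\mathcal{F}_\tau^*$-measurability.

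\emph{The two halves of \eqref{eq:Aggregation}.} For ``$\le$'': fix $\omega$ and $P\in\mathcal{P}_x(\sigma(\omega),\omega)$, put $\rho:=\tau^{\sigma,\omega}-\sigma(\omega)$, and take the shifted regular conditional probabilities $\{P^{\rho,\omega'}\}$. Condition~\ref{con:A}(iii) gives $P^{\rho,\omega'}\in\mathcal{P}_x(\tau(\omega\otimes_\sigma\omega'),\omega\otimes_\sigma\omega')$ for $P$-a.e.\ $\omega'$, hence $E_{P^{\rho,\omega'}}[(f^{\sigma,\omega})^{\rho,\omega'}]\le(\mathcal{E}^x_\tau(f))^{\sigma,\omega}(\omega')$ after using the concatenation–associativity identity $(f^{\sigma,\omega})^{\rho,\omega'}=f^{\tau,\,\omega\otimes_\sigma\omega'}$; integrating against $P$ and using the tower property,
\[
 E_P[f^{\sigma,\omega}]=E_P\big[E_{P^{\rho,\cdot}}[(f^{\sigma,\omega})^{\rho,\cdot}]\big]\le E_P\big[(\mathcal{E}^x_\tau(f))^{\sigma,\omega}\big]\le\mathcal{E}^x_\sigma(\mathcal{E}^x_\tau(f))(\omega),
\]
and a supremum over $P$ finishes this direction. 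For ``$\ge$'': given $\varepsilon>0$, the upper semianalyticity of $\omega'\mapsto\mathcal{E}^x_\tau(f)(\omega')$ from the previous step lets me invoke the Jankov–von Neumann theorem to pick a universally measurable $\varepsilon$-optimal kernel $\nu$ with $\nu(\omega')\in\mathcal{P}_x(\tau(\omega'),\omega')$ and $E_{\nu(\omega')}[f^{\tau,\omega'}]\ge\mathcal{E}^x_\tau(f)(\omega')-\varepsilon$; composed with the shift it becomes an $\mathcal{F}_\rho$-measurable kernel eligible for Condition~\ref{con:A}(iv). Pasting it onto an $\varepsilon$-optimal $P\in\mathcal{P}_x(\sigma(\omega),\omega)$ for the right-hand side of \eqref{eq:Aggregation} produces $\bar P\in\mathcal{P}_x(\sigma(\omega),\omega)$ with $E_{\bar P}[f^{\sigma,\omega}]\ge\mathcal{E}^x_\sigma(\mathcal{E}^x_\tau(f))(\omega)-2\varepsilon$, so $\varepsilon\downarrow0$ yields the reverse inequality. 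For general $\bar{\mathbb{R}}$-valued $f$ I would first run the argument for $f\wedge n$ and pass to the limit, checking that $\mathcal{E}^x_\tau$ is compatible with the relevant monotone limits. Finally, for the joint assertion I would repeat the first step on $\mathbb{R}^d\times\mathfrak{P}(\Omega)$: $(x,P)\mapsto E_P[f|_{\Omega_x}]$ is upper semianalytic, the set in \eqref{eq:SetAnalytic} is analytic by hypothesis, and projecting the former over the latter gives upper semianalyticity of $x\mapsto\mathcal{E}_0^x(f)$.

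I expect the main obstacle to be the reverse inequality. It pins down the logical order (the $\varepsilon$-optimal selection needs the upper semianalyticity of $\mathcal{E}^x_\tau(f)$), it hinges on a careful identification of the shifted stopping time $\rho$ and on the verification that the pasted measure $\bar P$ genuinely lies in $\mathcal{P}_x(\sigma(\omega),\omega)$ — which is exactly what the stability axiom in Condition~\ref{con:A}(iv) is designed to deliver — and it requires the reduction from bounded to general upper semianalytic $f$. Essentially all of the remaining effort is bookkeeping around the identities relating $\tilde\omega\otimes_\tau\omega$, $f^{\tau,\omega}$ and $P^{\tau,\omega}$ under iterated conditioning, and making sure the deterministic-time Condition~\ref{con:A} can be applied pointwise at $s=\sigma(\omega)$ along the stopping time $\sigma$.
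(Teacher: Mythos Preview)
The paper does not prove this theorem: it is quoted verbatim as a summary of \cite[Theorem 4.29 and Lemma 4.30]{hollender_levy-type_2016}, with no argument supplied beyond the citation. Your proposal correctly reconstructs the standard proof strategy underlying those references (projection of an upper semianalytic function over an analytic graph for the regularity, invariance for one inequality, Jankov--von Neumann selection plus stability for the other), so there is nothing to compare against in the paper itself --- you have effectively written out what the paper outsources.

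One small caveat worth flagging in your ``$\ge$'' step: the Jankov--von Neumann theorem yields only a \emph{universally} measurable $\varepsilon$-optimal selector, whereas Condition~\ref{con:A}(iv) asks for an $\mathcal{F}_\sigma$-measurable kernel. The passage from the former to the latter (modifying the selector on a $P$-null set so that it becomes measurable with respect to the raw $\sigma$-algebra) is routine but not automatic, and is the one place in the argument where details are sometimes skipped; you may want to spell it out.
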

	
	For $s\geq 0$, let $\vartheta_s:\,\Omega\rightarrow\Omega$ denote the time shift operator defined by
	\begin{equation} \label{eq:TimeShift}
		\vartheta_s(\omega)(t):=\omega(s+t),\quad t\geq 0.
	\end{equation} 
	We now summarise Proposition 4.31 and Lemma 4.32 in \cite{hollender_levy-type_2016}.
	\begin{prop}\label{prop:Hollender4.31}
		Suppose that the map
		\begin{equation}
			H\,:\mathbb{R}_+\times\Omega\rightarrow 	2^{\mathbb{R}^d\times\mathbb{S}_+\times\mathfrak{L}_+}
		\end{equation}
		is such that
		\begin{equation} \label{eq:AdaptedFiltration}
			\left\{(t,\omega,h)\in [0,T]\times\Omega\times(\mathbb{R}^d\times\mathbb{S}_+\times\mathfrak{L}_+)\,:\,h\in H(t,\omega)\right\}\in\mathcal{B}([0,T])\otimes\mathcal{F}_T\otimes\mathcal{B}(\mathbb{R}^d\times\mathbb{S}_+\times\mathfrak{L}_+) 
		\end{equation}
		for all $T\geq 0$.
		Then for all $x\in\mathbb{R}^d$, the family $\{\mathcal{P}^H_x(s,\tilde{\omega})\}_{(s,\tilde{\omega})\in\mathbb{R}_+\times\Omega_x}$ defined by
		\begin{equation}\label{eq:P-H}
			\mathcal{P}^H_x(s,\tilde{\omega}):=\Big\{P\in\mathfrak{P}_{sem}^{ac}(\Omega_x)\,:\,(b^P,a^P,k^P)\in H(s+\cdot,\tilde{\omega}\otimes_s\cdot) \quad dt\otimes P\text{-a.e.}\Big\}
		\end{equation}
		satisfies all assumptions from Condition \ref{con:A} and the set in \eqref{eq:SetAnalytic} is analytic.\\
		In particular, \eqref{eq:AdaptedFiltration} holds for all $T\geq 0$ if $H(t,\omega):=\bar{H}(\omega(t))$, where $\bar{H}:\,\mathbb{R}^d\rightarrow 2^{\mathbb{R}^d\times\mathbb{S}_+\times\mathfrak{L}_+}$ is a map with a Borel-measurable graph, i.e., the set
		\begin{equation}
			\Big\{(x,h)\in\mathbb{R}^d\times(\mathbb{R}^d\times\mathbb{S}_+\times\mathfrak{L}_+)\,:\,h\in \bar{H}(x)\Big\}
		\end{equation}
		is Borel.
		In this case, the corresponding sublinear conditional expectation from Theorem \ref{thm:Hollender4.29} satisfies
		\begin{equation}
			\mathcal{E}_0^x\left(\mathcal{E}_0^{X_t}(f)\right)=\mathcal{E}_0^x(f\circ\vartheta_t)
		\end{equation}
		for every upper semianalytic function $f:\,\Omega\rightarrow\bar{\mathbb{R}}$, $t\geq 0$ and $x\in\mathbb{R}^d$, where $\mathcal{E}_0^{X_t}$ is defined as \begin{equation*}
		    \mathcal{E}_0^{X_t}(\cdot)=\mathcal{E}_0^x(\cdot)\vert_{x=X_t}.
		\end{equation*}
	\end{prop}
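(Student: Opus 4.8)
The statement collects \cite[Proposition 4.31 and Lemma 4.32]{hollender_levy-type_2016}, so the plan is to follow that route. The backbone is to check that the family $\{\mathcal{P}^H_x(s,\tilde\omega)\}$ from \eqref{eq:P-H} satisfies the four items of Condition \ref{con:A} and that the graph \eqref{eq:SetAnalytic} is analytic; both displayed conclusions then follow from Theorem \ref{thm:Hollender4.29}. Item (i), adaptedness, is immediate: $\mathcal{P}^H_x(t,\tilde\omega)$ depends on $\tilde\omega$ only through the paths $\tilde\omega\otimes_t\cdot$, and if $\tilde\omega=\omega$ on $[0,t]$ then $\tilde\omega\otimes_t\omega'=\omega\otimes_t\omega'$ for every $\omega'$, so the two sets coincide. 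Items (ii)--(iv) rest on the measurable structure of semimartingale laws from \cite{neufeld_measurability_2014} (which builds on \cite{nutz_constructing_2013}): the set $\mathfrak{P}^{ac}_{sem}$ is Borel, the map assigning to such a law a version of its differential characteristics is Borel, regular conditional probabilities of semimartingale laws are again semimartingale laws with the same (time-shifted) characteristics, and pasted measures are semimartingale laws whose characteristics are the concatenation of the pieces. Feeding in the joint measurability hypothesis \eqref{eq:AdaptedFiltration} on $H$, together with the joint measurability of the stopping time and of the concatenation map $(s,\tilde\omega,\omega')\mapsto\tilde\omega\otimes_s\omega'$, I would check: that the graph in Condition \ref{con:A}(ii) is Borel, hence analytic; that $P^{\sigma,\omega}$ inherits characteristics in $H$ evaluated along $\tilde\omega\otimes_s\omega$, giving (iii); and that $\bar P$ has characteristics agreeing $dt\otimes\bar P$-a.e. with those prescribed by $H$, giving (iv). Analyticity of \eqref{eq:SetAnalytic} follows the same way, now tracking the measurable dependence on the extra parameter $x$.

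For the ``in particular'' assertion on \eqref{eq:AdaptedFiltration}: when $H(t,\omega)=\bar H(\omega(t))$, the set there equals the preimage of the Borel graph $\{(y,h):h\in\bar H(y)\}$ under $(t,\omega,h)\mapsto(\omega(t),h)$. Since $(t,\omega)\mapsto\omega(t)=X_t(\omega)$ is jointly (indeed progressively) measurable on $[0,T]\times\Omega$, this preimage lies in $\mathcal{B}([0,T])\otimes\mathcal{F}_T\otimes\mathcal{B}(\mathbb{R}^d\times\mathbb{S}_+\times\mathfrak{L}_+)$.

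It then remains to prove the Markov-type identity. Fix $t>0$ (the case $t=0$ is trivial) and set $g:=f\circ\vartheta_t$, which is upper semianalytic since $\vartheta_t$ is continuous. Applying the aggregation property \eqref{eq:Aggregation} with $\sigma=0$ and the constant stopping time $\tau\equiv t$ gives $\mathcal{E}^x_0(\mathcal{E}^x_t(g))=\mathcal{E}^x_0(g)=\mathcal{E}^x_0(f\circ\vartheta_t)$, so it suffices to identify $\mathcal{E}^x_t(g)(\omega)$ with $\mathcal{E}^{X_t(\omega)}_0(f)$. Unwinding the definitions and using $\omega'(0)=x$ for $\omega'\in\Omega_x$, one gets for $s\geq 0$ that $\vartheta_t(\omega\otimes_t\omega')(s)=\omega'(s)+\big(\omega(t)-x\big)$; thus $g^{t,\omega}(\omega')=f(\psi\omega')$, where $\psi$ is the constant translation by $c:=X_t(\omega)-x$, a bijection of $\Omega_x$ onto $\Omega_{X_t(\omega)}$. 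Since differential characteristics (relative to a fixed truncation function) are invariant under constant translations and $H$ has the form $\bar H(\omega(t))$, the map $P\mapsto P\circ\psi^{-1}$ is a bijection between $\mathcal{P}_x(t,\omega)=\mathcal{P}^H_x(t,\omega)$ and $\mathcal{P}_{X_t(\omega)}$, along which $E_P[g^{t,\omega}]=E_{P\circ\psi^{-1}}[f]$. Taking suprema yields $\mathcal{E}^x_t(g)(\omega)=\mathcal{E}^{X_t(\omega)}_0(f)$; inserting this into the aggregation identity finishes the proof.

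I expect the main obstacle to be not conceptual but the careful bookkeeping in the last step: one must track precisely how the ``increment'' convention in $\otimes_t$ (the subtraction of $\omega'(0)$) interacts with the time shift $\vartheta_t$ and with the Markovian form of $H$, and verify that the prescribed differential characteristics are genuinely unchanged under the constant translation $\psi$, so that the constraint defining $\mathcal{P}^H_x(t,\omega)$ transforms exactly into the one defining $\mathcal{P}_{X_t(\omega)}$. The verification of Condition \ref{con:A} itself is routine once one invokes the measurability and stability theory for semimartingale laws in \cite{neufeld_measurability_2014, nutz_constructing_2013}, as already organised in \cite{hollender_levy-type_2016}.
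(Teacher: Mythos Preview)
Your proposal is correct and matches the paper's treatment: the paper does not give its own proof of this proposition but simply states it as a summary of \cite[Proposition 4.31 and Lemma 4.32]{hollender_levy-type_2016}, which is exactly the route you announce in your first sentence. Your sketch of how those results are established (via the measurability theory of semimartingale laws from \cite{neufeld_measurability_2014, nutz_constructing_2013} for Condition \ref{con:A}(ii)--(iv), the preimage argument for the Markovian $H$, and the translation bijection for the Markov identity) goes beyond what the paper itself provides, but accurately reflects the argument in the cited reference.
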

	
	The results from \cite{hollender_levy-type_2016} provide a method for constructing a family of subsets that satifisfies Condition \ref{con:A}.
	In particular, by Proposition \ref{prop:Hollender4.31} it suffices to prove that the graph of $x\mapsto\Theta(x)$ is Borel-measurable.
	
	\begin{lem}\label{lem:Borel}
		Let $\Theta\subseteq(\mathbb{R}^d)^{d+1}\times\mathbb{S}^{d+1}\times\mathfrak{L}^{d+1}$ be non-empty and closed.
		Then the set
		\begin{equation}
			\left\{(x,b,a,k)\in\mathbb{R}^d\times\mathbb{R}^d\times\mathbb{S}_+\times\mathfrak{L}_+\,:\,(b,a,k)\in\Theta(x)\right\}
		\end{equation}
		is Borel, {where $\Theta(x)$ is defined in \eqref{eq:Theta}.}
	\end{lem}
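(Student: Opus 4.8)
The plan is to split off the trivial behaviour of the process outside $\textnormal{S}$, to rewrite $\Theta(x)$ on $\textnormal{S}$ as the image of $\Theta$ under a jointly continuous family of affine maps, and then to exploit the separability of $\Theta$, the delicate point being the non-local-compactness of $\mathfrak{L}$. First I would reduce to the case $x\in\textnormal{S}$: every component of $\theta(x)$ in \eqref{eq:theta1}--\eqref{eq:theta3} carries the factor $\mathbf{1}_{\textnormal{S}}(x)$, so for $x\notin\textnormal{S}$ one has $\theta(x)=(0,0,0)$ for every $\theta\in\Theta$, whence $\Theta(x)=\{(0,0,0)\}$ (note $0\in\mathbb{S}_+$ and the zero measure lies in $\mathfrak{L}_+$). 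Since $\textnormal{S}$ is closed, the part of the set in question lying over $\mathbb{R}^d\setminus\textnormal{S}$ equals $(\mathbb{R}^d\setminus\textnormal{S})\times\{0\}\times\{0\}\times\{0\}$, which is Borel; it therefore suffices to show that
\begin{equation*}
	G:=\big\{(x,b,a,k)\in\textnormal{S}\times\mathbb{R}^d\times\mathbb{S}_+\times\mathfrak{L}_+\,:\,(b,a,k)\in\Theta(x)\big\}
\end{equation*}
is Borel.

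On $\textnormal{S}$, write $\Psi_x(\theta):=\big(\beta_0+\sum_{i=1}^d x^i\beta_i,\ \alpha_0+\sum_{i=1}^d x^i\alpha_i,\ \nu_0+\sum_{i=1}^d x^i\nu_i\big)$ for $\theta=(\beta,\alpha,\nu)$, so that $\theta(x)=\Psi_x(\theta)$ there. The map $(x,\theta)\mapsto\Psi_x(\theta)$ is jointly continuous: the $\mathbb{R}^d$- and $\mathbb{S}$-parts are polynomial in $x$ and linear in $\theta$, and for the $\mathfrak{L}$-part one uses that $d_{\mathfrak{L}}$ is obtained, via $k\mapsto\hat k$, from a metric on $\mathfrak{M}^f(\mathbb{R}^d)$ compatible with the linear structure, so that addition and scalar multiplication on $\mathfrak{L}$ are continuous. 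As $\Theta$ is closed in the separable metric space $E:=(\mathbb{R}^d)^{d+1}\times\mathbb{S}^{d+1}\times\mathfrak{L}^{d+1}$ it is itself separable; I fix a countable dense set $\{\theta^n\}_{n\in\mathbb{N}}\subseteq\Theta$. Each $x\mapsto(x,\theta^n(x))$ is Borel (continuous on $\textnormal{S}$, equal to $(x,0,0,0)$ off $\textnormal{S}$), and the graph of a Borel map into a metric space is Borel, so each $\Gamma^n:=\{(x,\theta^n(x)):x\in\mathbb{R}^d\}$ is a Borel subset of $\mathbb{R}^d\times\mathbb{R}^d\times\mathbb{S}\times\mathfrak{L}$.

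To assemble $G$, note that $\widetilde G:=\{(x,b,a,k,\theta)\in\textnormal{S}\times\mathbb{R}^d\times\mathbb{S}_+\times\mathfrak{L}_+\times\Theta:\Psi_x(\theta)=(b,a,k)\}$ is closed in its ambient product (preimage of a diagonal under a continuous map, $\Theta$ being closed), hence Borel, and $G$ is its projection onto the first four coordinates. Solving $\Psi_x(\theta)=(b,a,k)$ for $(\beta_0,\alpha_0,\nu_0)$ leaves the free parameters $(\beta_1,\dots,\beta_d,\alpha_1,\dots,\alpha_d)$, which range over the $\sigma$-compact space $(\mathbb{R}^d)^d\times\mathbb{S}^d$, together with $(\nu_1,\dots,\nu_d)\in\mathfrak{L}^d$; projecting out the finite-dimensional parameters first preserves Borel-ness, since the relevant fibres are closed subsets of a $\sigma$-compact space (a tube-lemma / Arsenin--Kunugui argument). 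The genuine difficulty, which I expect to be the main obstacle, is projecting out the remaining $\mathfrak{L}^d$-parameters: $\mathfrak{L}$ is not locally compact, the continuous linear image $\Theta(x)=\Psi_x(\Theta)$ need not be closed, and the naive Borel candidate $\bigcap_m\bigcup_n\{(x,h):x\in\textnormal{S},\,d_\pi(\theta^n(x),h)<1/m\}$ only describes $\{(x,h):h\in\overline{\Theta(x)}\}$, which may strictly contain $G$. Capturing $\Theta(x)$ \emph{exactly} requires more than the topology of $\Theta$ --- exploiting the affine dependence of $x\mapsto\theta(x)$ together with the closedness of $\Theta$, for instance by constructing a Borel (and not merely universally measurable) selection $(x,h)\mapsto\theta_{x,h}\in\Theta\cap\Psi_x^{-1}(\{h\})$ over $G$, which would exhibit $G$ as an injective Borel image of a Borel set. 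Reconciling the lack of $\sigma$-compactness in the $\mathfrak{L}$-direction with the need to land in $\Theta(x)$ rather than $\overline{\Theta(x)}$ is, I believe, the technically demanding point, whereas the reduction and the dense-set set-up are routine.
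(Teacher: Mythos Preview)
Your set-up --- splitting off $x\notin\textnormal{S}$, introducing the jointly continuous evaluation $\Psi_x$, and passing to a countable dense subset $\{\theta^n\}\subseteq\Theta$ --- coincides with the paper's. From there, however, the paper does precisely what you label the ``naive'' construction and then discard: it sets
\[
G:=\Big\{(x,b,a,k)\,:\,\inf_{\theta\in\Theta_c}d_\pi\big((b,a,k),\theta(x)\big)=0\Big\},
\]
observes that $G$ is Borel as the zero set of a countable infimum of Borel maps, and claims $G$ equals the graph of $x\mapsto\Theta(x)$. For the inclusion $G\subseteq\{(x,b,a,k):(b,a,k)\in\Theta(x)\}$ the paper chooses $\theta_n\in\Theta_c$ with $\theta_n(x)\to(b,a,k)$, writes ``since $\overline{\Theta_c}=\Theta$, we have $\theta:=\lim_n\theta_n\in\Theta$'', and concludes $(b,a,k)=\theta(x)\in\Theta(x)$.

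Your suspicion about this step is well placed: convergence of $\theta_n(x)$ in $\mathbb{R}^d\times\mathbb{S}\times\mathfrak{L}$ does not by itself force the sequence $(\theta_n)$ to converge in the parameter space $(\mathbb{R}^d)^{d+1}\times\mathbb{S}^{d+1}\times\mathfrak{L}^{d+1}$ (the evaluation $\theta\mapsto\theta(x)$ is linear but far from injective), so the assertion $\theta:=\lim_n\theta_n\in\Theta$ is not justified as written, and $G$ is a priori only the graph of $x\mapsto\overline{\Theta(x)}$ --- exactly the distinction you drew. That said, your own proposal does not close the gap either: you correctly isolate the obstacle (projecting out the non-$\sigma$-compact $\mathfrak{L}^d$-direction, or equivalently showing $\Theta(x)=\overline{\Theta(x)}$) but the Arsenin--Kunugui/selection route you sketch is left as a programme rather than carried out. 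In summary, the paper takes the short direct path you considered and rejected, glossing over the very point you flag; your attempt diagnoses the issue accurately but remains incomplete.
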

	\begin{proof}
		Recall that $\Theta(x)=\{\theta(x):\,\theta\in\Theta\}$, and every $\theta \in\Theta$ defines a Borel-measurable map  $x\mapsto\theta(x)$, cf. \eqref{eq:theta}.
		For each $\theta \in \Theta$, consider the Borel-measurable map $d_{\tau}: \mathbb{R}^d\times\mathbb{R}^d\times\mathbb{S}_+\times\mathfrak{L}_+\rightarrow\mathbb{R}_+$ given by
		\begin{equation}
			d_{\tau}(x,b,a,k):= d_{\pi}\Big((b,a,k),\theta(x)\Big),
		\end{equation}
		where {$\mathfrak{L}_+$ is defined in \eqref{eq:LevyCone}} and $d_{\pi}$ denotes a metric on $\mathbb{R}^d\times\mathbb{S}\times\mathfrak{L}$.
		As a subspace of a separable metric space, $\Theta\subseteq(\mathbb{R}^d)^{d+1}\times\mathbb{S}^{d+1}\times\mathfrak{L}^{d+1}$ is separable, i.e., there exists a countable, dense subset $\Theta_c$ in $\Theta$.
		Fix such a subset $\Theta_c$, and consider
		\begin{equation}
			\inf_{\theta\in\Theta_c} d_{\pi}\Big((b,a,k),\theta(x)\Big).
		\end{equation}
		The pointwise infimum of countably many Borel-measurable maps is again Borel-measurable.
		Hence, the set
		\begin{equation}
			G:=\left\{(x,b,a,k)\in\mathbb{R}^d\times\mathbb{R}^d\times\mathbb{S}_+\times\mathfrak{L}_+\,:\,\inf_{\theta\in\Theta_c} d_{\pi}\Big((b,a,k),\theta(x)\Big)= 0\right\}
		\end{equation}
		is Borel.
		Further, observe that
		\begin{equation}
			G=\left\{(x,b,a,k)\in\mathbb{R}^d\times\mathbb{R}^d\times\mathbb{S}_+\times\mathfrak{L}_+\,:\,(b,a,k)\in\Theta(x)\right\}.
		\end{equation}
		The $\supseteq$-inclusion is trivial.
		For the other direction, fix $(x,b,a,k)\in G$.
		Then there exists a sequence $(\theta_n)_{n\in\mathbb{N}}$ in $\Theta_c$ such that
		\begin{equation}
			\lim_{n\rightarrow\infty}d_\pi \Big((b,a,k),\theta_n(x)\Big)=0.
		\end{equation}
		Since $\bar{\Theta_c}=\Theta$, we have $\theta:=\lim_{n\rightarrow\infty}\theta_n\in\Theta$. Moreover, by the continuity of $d_\pi$ we have
		\begin{equation}
			d_\pi\Big((b,a,k),\theta(x)\Big)=d_\pi\Big((b,a,k),\lim_{n\rightarrow\infty}\theta_n(x)\Big)=\lim_{n\rightarrow\infty}d_\pi \Big((b,a,k),\theta_n\left(x\right)\Big)=0.
		\end{equation}
		Hence, $(b,a,k)=\theta(x)\in\Theta(x)$, and the identity holds.
	\end{proof}
	
	\begin{rem}\label{rem:Borel}
		The proof is stated for affine parameters, but solely relies on the separability of the space $\mathbb{R}^d\times\mathbb{S}\times\mathfrak{L}$.
		Thus, it also carries over for polynomial processes, i.e., when we consider polynomial functions in \eqref{eq:theta1}-\eqref{eq:theta3}. In particular, this implies that non-linear polynomial processes can be defined analogously and satisfy the dynamic programming principle. We refer to e.g. \cite{cuchiero_affine_2011} for an introduction to classical polynomial processes.
	\end{rem}
	
	\begin{prop}\label{prop:DynamicProgramming}
		Let $\Theta\subseteq(\mathbb{R}^d)^{d+1}\times\mathbb{S}^{d+1}\times\mathfrak{L}^{d+1}$ be non-empty and closed.
		For $x\in\mathbb{R}^d$, let the family $\{\mathcal{P}_x(s,\omega)\}_{(s,\omega)\in\mathbb{R}_+\times\Omega}$ be defined by
		\begin{equation}
		    \mathcal{P}_x(s,\omega):=\Big\{P\in\mathfrak{P}_{sem}^{ac}(\Omega)\,:\,X_0=x\;P\text{-a.s.},\quad (b^P,a^P,k^P)\in\Theta(X_{s+\cdot} (\omega\otimes_s\cdot ))\text{ }dt\otimes dP\text{-a.e.}\Big\}
		\end{equation}
		for $\omega\in\Omega_x$, and $\mathcal{P}_x(s,\omega):=\emptyset$ otherwise.
		Then, for any upper semianalytic function $f:\,\Omega\rightarrow\bar{\mathbb{R}}$ and finite $\mathbb{F}$-stopping times $\sigma\leq\tau$, the function
		\begin{equation}
			\mathcal{E}^x_{\tau}(f)(\omega):=\sup_{P\in\mathcal{P}_x(\tau(\omega),\omega)} E_P\left[f^{\tau,\omega}\right]
		\end{equation}
		is upper semianalytic and $\mathcal{F}_\tau^*$-measurable, and the dynamic programming principle holds, i.e.,
		\begin{equation}\label{eq:aggregation_property}
			\mathcal{E}^x_\sigma\left(\mathcal{E}^x_\tau(f)\right)=\mathcal{E}^x_\sigma(f)\quad\text{on }\Omega.
		\end{equation}
	\end{prop}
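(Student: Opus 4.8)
The plan is to read the statement off the abstract machinery of \cite{hollender_levy-type_2016} (Theorem~\ref{thm:Hollender4.29} and Proposition~\ref{prop:Hollender4.31}) after verifying its one non-trivial hypothesis by means of Lemma~\ref{lem:Borel}. Fix $x\in\mathbb{R}^d$. I would set $\bar H\colon\mathbb{R}^d\to 2^{\mathbb{R}^d\times\mathbb{S}_+\times\mathfrak{L}_+}$, $\bar H(y):=\Theta(y)\cap(\mathbb{R}^d\times\mathbb{S}_+\times\mathfrak{L}_+)$ with $\Theta(y)$ as in \eqref{eq:Theta}, and $H(t,\omega):=\bar H(\omega(t))=\bar H(X_t(\omega))$. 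Since any $(P,\mathbb{F})$-differential characteristic triple automatically takes values in the cone $\mathbb{R}^d\times\mathbb{S}_+\times\mathfrak{L}_+$ (indeed $a^P$ is non-negative definite and $k^P$ is a non-negative Lévy measure), intersecting $\Theta(\cdot)$ with that cone does not alter the constraint defining $\mathcal{P}_x(s,\omega)$; hence, identifying as usual measures in $\mathfrak{P}_{sem}^{ac}(\Omega)$ with $X_0=x$ $P$-a.s.\ with measures in $\mathfrak{P}_{sem}^{ac}(\Omega_x)$, I get $\mathcal{P}_x(s,\omega)=\mathcal{P}^H_x(s,\omega)$ for all $\omega\in\Omega_x$, and $\mathcal{P}_x(s,\omega)=\emptyset=\mathcal{P}^H_x(s,\omega)$ otherwise, where $\mathcal{P}^H_x$ is the family in \eqref{eq:P-H}.

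Next, Lemma~\ref{lem:Borel} asserts exactly that $\{(y,b,a,k)\in\mathbb{R}^d\times\mathbb{R}^d\times\mathbb{S}_+\times\mathfrak{L}_+:(b,a,k)\in\Theta(y)\}$ is Borel, i.e.\ that $\bar H$ has Borel-measurable graph. So the last part of Proposition~\ref{prop:Hollender4.31} applies and yields that \eqref{eq:AdaptedFiltration} holds for every $T\ge0$; consequently $\{\mathcal{P}_x(s,\omega)\}=\{\mathcal{P}^H_x(s,\omega)\}$ satisfies Condition~\ref{con:A} and the graph \eqref{eq:SetAnalytic} of $x\mapsto\mathcal{P}_x$ is analytic. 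Feeding this into Theorem~\ref{thm:Hollender4.29} (applied to $f|_{\Omega_x}$, which is upper semianalytic on $\Omega_x$) gives immediately that $\mathcal{E}^x_\tau(f)$ is upper semianalytic and $\mathcal{F}_\tau^*$-measurable for every finite $\mathbb{F}$-stopping time $\tau$, and that $\mathcal{E}^x_\sigma(\mathcal{E}^x_\tau(f))=\mathcal{E}^x_\sigma(f)$ on $\Omega_x$ whenever $\sigma<\tau$; off $\Omega_x$ every term equals $\sup\emptyset=-\infty$, so the identity extends to all of $\Omega$.

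It then remains to upgrade the aggregation identity from $\sigma<\tau$ to $\sigma\le\tau$, i.e.\ to verify the idempotency $\mathcal{E}^x_\tau(\mathcal{E}^x_\tau(f))=\mathcal{E}^x_\tau(f)$. For this I would use the elementary fact that for a finite $\mathbb{F}$-stopping time $\tau$ one has $\tau(\omega\otimes_\tau\omega')=\tau(\omega)$ and $\omega\otimes_\tau\omega'=\omega$ on $[0,\tau(\omega)]$ for all $\omega,\omega'\in\Omega$ (because $\{\tau\le t\}\in\mathcal{F}_t$ depends only on the path on $[0,t]$). Hence the $\mathcal{F}_\tau^*$-measurable function $g:=\mathcal{E}^x_\tau(f)$ satisfies $g^{\tau,\omega}\equiv g(\omega)$, so $E_P[g^{\tau,\omega}]=g(\omega)$ for every $P$ and therefore $\mathcal{E}^x_\tau(g)(\omega)=g(\omega)$ on $\Omega_x$ (the case $\mathcal{P}_x(\tau(\omega),\omega)=\emptyset$ being trivial, both sides then being $-\infty$), with $-\infty=-\infty$ off $\Omega_x$.

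The step I expect to require the most care is the identification $\mathcal{P}_x(s,\omega)=\mathcal{P}^H_x(s,\omega)$: making precise that restricting the parameter set to the cone $\mathbb{R}^d\times\mathbb{S}_+\times\mathfrak{L}_+$ is harmless, that the clause ``$X_0=x$ $P$-a.s.'' corresponds to the passage from $\Omega$ to $\Omega_x$, and that the input really needed from Lemma~\ref{lem:Borel} is precisely the Borel-measurability of the graph of $\bar H$ demanded by Proposition~\ref{prop:Hollender4.31}. Once this bookkeeping is in place, the upper semianalyticity, the $\mathcal{F}_\tau^*$-measurability, and the dynamic programming identity are a direct consequence of Theorem~\ref{thm:Hollender4.29}, with only the minor boundary case $\sigma=\tau$ handled separately as above.
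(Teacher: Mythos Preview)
Your proposal is correct and follows essentially the same route as the paper: define $\bar H(y)=\Theta(y)\cap(\mathbb{R}^d\times\mathbb{S}_+\times\mathfrak{L}_+)$, invoke Lemma~\ref{lem:Borel} to get a Borel graph, identify $\mathcal{P}_x(s,\omega)$ with $\mathcal{P}^H_x(s,\omega)$ via the observation that differential characteristics lie in the cone anyway, and then apply Proposition~\ref{prop:Hollender4.31} and Theorem~\ref{thm:Hollender4.29} on $\Omega_x$, extending by $-\infty$ off $\Omega_x$. Your additional treatment of the boundary case $\sigma=\tau$ via idempotency is a nice touch that the paper does not spell out; note, however, that the justification should rest on the explicit adaptedness of $\omega\mapsto\mathcal{P}_x(\tau(\omega),\omega)$ (Condition~\ref{con:A}(i)) and Galmarino's test rather than on $\mathcal{F}_\tau^*$-measurability alone, and that passing from the two separate cases $\sigma<\tau$ and $\sigma\equiv\tau$ to the general $\sigma\le\tau$ still needs a word (split on $\{\sigma=\tau\}\in\mathcal{F}_\sigma$).
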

	
	\begin{proof}
		The proof is a straightforward application of Theorem \ref{thm:Hollender4.29} and Proposition \ref{prop:Hollender4.31}.
		As an immediate consequence of Lemma \ref{lem:Borel}, the graph of the map $\bar{H}:\,\mathbb{R}^d\rightarrow 2^{\mathbb{R}^d\times\mathbb{S}_+\times\mathfrak{L}_+}$ given by
		\begin{equation}\label{eq:H_map}
			 \bar{H}(x)=\Theta(x)\cap(\mathbb{R}^d\times\mathbb{S}_+\times\mathfrak{L}_+)
		\end{equation}
		is Borel since $\Theta$ is non-empty and closed.
		Fix $x\in\mathbb{R}^d$, and consider the map $H:\,(t,\omega)\mapsto\bar{H}(\omega(t))=\bar{H}(X_t(\omega))$.
        Let $\mathcal{P}^H_x(s,\omega)$ with $s\geq 0$ and $\omega\in\Omega_x$ be defined as in Proposition \ref{prop:Hollender4.31}. Then
        \begin{align}
            \mathcal{P}^H_x(s,\omega)
            :&=\Big\{P\in\mathfrak{P}_{sem}^{ac}(\Omega_x)\,:\,(b^P,a^P,k^P)\in H(s+\cdot,\omega\otimes_s\cdot )\text{ }dt\otimes dP\text{-a.e.}\Big\} \nonumber \\
            &= \Big\{P\in\mathfrak{P}_{sem}^{ac}(\Omega_x)\,:\,(b^P,a^P,k^P)\in \bar{H}\left(X_{s+\cdot}(\omega\otimes_s\cdot)\right)\text{ }dt\otimes dP\text{-a.e.}\Big\} 
            \nonumber\\
            &= \Big\{P\in\mathfrak{P}_{sem}^{ac}(\Omega_x)\,:\,(b^P,a^P,k^P)\in \Theta\left(X_{s+\cdot}(\omega\otimes_s\cdot)\right)\text{ }dt\otimes dP\text{-a.e.}\Big\},
        \end{align}
        where the last identity is immediate by  \eqref{eq:H_map}  since the characteristics $(b^P,a^P,k^P)$ takes values in $\mathbb{R}^d\times\mathbb{S}_+\times\mathfrak{L}_+$.
        Observe that for all $\omega\in\Omega_x$, we have
        \begin{equation}
            \mathcal{P}_x^H(s,\omega)=\Big\{ P|_{\Omega_x}\,:\,P\in\mathcal{P}_x(s,\omega)\Big\},
        \end{equation}
        and thus
        \begin{equation}
            \mathcal{E}^x_\tau(f)(\omega)=\sup_{P\in\mathcal{P}_x(\tau(\omega),\omega)} E_P\left[f^{\tau,\omega}\right]=\sup_{P\in\mathcal{P}^H_x(\tau(\omega),\omega)} E_P\left[(f|_{\Omega_x})^{\tau,\omega}\right].
        \end{equation}
        Since $f|_{\Omega_x}$ is upper semianalytic for an upper semianalytic $f:\,\Omega\rightarrow\bar{\mathbb{R}}$, Proposition \ref{prop:Hollender4.31} and Theorem \ref{thm:Hollender4.29} yields the desired properties on $\Omega_x$.
        For $\omega\in\Omega\setminus\Omega_x$, we have
        \begin{equation}
            \mathcal{E}^x_\tau(f)(\omega)=-\infty,
        \end{equation}
        thus the desired measurability properties and the dynamic programming property on $\Omega$ follow immediately.
    \end{proof}
        
    Note that the conditional sublinear expectations from Proposition \ref{prop:DynamicProgramming} are compatible with the sublinear expectations $\mathcal{E}^x$ defined in \eqref{eq:defSublinearExpectation} in the sense that for all $x\in\mathbb{R}^d$ and upper semianalytic $f:\,\Omega\rightarrow\bar{\mathbb{R}}$, it holds that
    \begin{equation}
        \mathcal{E}^x(f)=\mathcal{E}^x_0(f)\quad\text{on }\Omega_x.
    \end{equation}
    
	In the classical setting, the Markov property is quintessential to affine processes.
	The dynamic programming principle of Proposition \ref{prop:DynamicProgramming} yields an analogue for sublinear conditional expectations.

	\begin{cor}\label{lem:SubMarkov}
		Let $\Theta\subseteq(\mathbb{R}^d)^{d+1}\times\mathbb{S}^{d+1}\times\mathfrak{L}^{d+1}$ be non-empty and closed.
		Then for any $x \in \mathbb{R}$, $s,t\geq 0$ and any upper semianalytic function $f:\,\Omega\rightarrow\bar{\mathbb{R}}$, we have
		\begin{equation}
			\mathcal{E}^x_s(f\circ\vartheta_s)=\mathcal{E}^{X_s}(f)
	        \quad \text{on }\Omega_x,
		\end{equation}
		where $\vartheta_s$ is the time shift defined in \eqref{eq:TimeShift}.
		In particular, 
		\begin{equation}\label{eq:markov}
			\mathcal{E}_s^x(f(X_{t+s}))
			=\mathcal{E}^{X_s}(f(X_t))\quad\text{on }\Omega_x.
		\end{equation}
	\end{cor}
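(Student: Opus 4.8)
The plan is to prove the pointwise identity $\mathcal{E}^x_s(f\circ\vartheta_s)(\omega)=\mathcal{E}^{X_s(\omega)}(f)$ for every $\omega\in\Omega_x$ by exhibiting an explicit bijection, induced by a constant translation of paths, between the prior set $\mathcal{P}_x(s,\omega)$ from Proposition \ref{prop:DynamicProgramming} and $\mathcal{P}_y(\Theta)$, where $y:=X_s(\omega)=\omega(s)$. First I would fix $x\in\mathbb{R}^d$, $s\geq0$, an upper semianalytic $f:\Omega\to\bar{\mathbb{R}}$ (so that $f\circ\vartheta_s$ is again upper semianalytic, $\vartheta_s$ being Borel, and $\mathcal{E}^x_s(f\circ\vartheta_s)$ is legitimate by Proposition \ref{prop:DynamicProgramming}) and $\omega\in\Omega_x$, set $c:=y-x$, and introduce $T_c:\Omega\to\Omega$, $(T_c\omega')(t):=\omega'(t)+c$, which is a homeomorphism of the Skorokhod space and $\mathcal{F}_t/\mathcal{F}_t$-measurable for each $t$. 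A short computation with the concatenation and the time shift gives $\vartheta_s(\omega\otimes_s\omega')=T_c\omega'$ for every $\omega'\in\Omega_x$, hence $(f\circ\vartheta_s)^{s,\omega}(\omega')=f(T_c\omega')$ there; since every $P\in\mathcal{P}_x(s,\omega)$ is carried by $\Omega_x$ and $E_P[f\circ T_c]=E_{P\circ T_c^{-1}}[f]$, this reduces the claim to
\begin{equation*}
\sup_{P\in\mathcal{P}_x(s,\omega)}E_{P\circ T_c^{-1}}[f]=\sup_{Q\in\mathcal{P}_y(\Theta)}E_Q[f]=\mathcal{E}^y(f).
\end{equation*}

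Next I would show that $P\mapsto P\circ T_c^{-1}$ maps $\mathcal{P}_x(s,\omega)$ bijectively onto $\mathcal{P}_y(\Theta)$. The two ingredients are: (i) translating a semimartingale by a constant leaves its differential characteristics unchanged — this I would read off the canonical decomposition \eqref{eq:CanonicalDecomposition}, noting that $X+c$ has the same jump measure and the same continuous local martingale part as $X$, whence also the same predictable finite variation part — so that $Q:=P\circ T_c^{-1}\in\mathfrak{P}_{sem}^{ac}(\Omega)$ with $(b^Q,a^Q,k^Q)=(b^P,a^P,k^P)\circ T_c^{-1}$ for $dt\otimes dQ$-a.e. $(t,\omega'')$, and $Q(X_0=y)=1$; and (ii) the pointwise identity $X_{s+t}(\omega\otimes_s\omega')=c+\omega'(t)=X_t(T_c\omega')$, valid for $\omega'\in\Omega_x$, which turns the defining constraint $(b^P_t,a^P_t,k^P_t)\in\Theta\big(X_{s+t}(\omega\otimes_s\cdot)\big)$ of $\mathcal{P}_x(s,\omega)$, after pushforward by $T_c$, into exactly $(b^Q_t,a^Q_t,k^Q_t)\in\Theta(X_t)$ $dt\otimes dQ$-a.e., the defining constraint of $\mathcal{P}_y(\Theta)$. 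Applying the same reasoning with $T_{-c}$ produces the inverse, so the map is a bijection and the two suprema coincide.

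Combining these gives $\mathcal{E}^x_s(f\circ\vartheta_s)(\omega)=\mathcal{E}^{y}(f)=\mathcal{E}^{X_s(\omega)}(f)$ for all $\omega\in\Omega_x$, the right-hand side being a well-defined $\mathcal{F}^*_s$-measurable function of $\omega$ by the Borel-measurability of $X_s$ together with the analyticity of the graph of $x\mapsto\mathcal{P}_x$ established in the proof of Proposition \ref{prop:DynamicProgramming} (via Theorem \ref{thm:Hollender4.29}). The stated special case \eqref{eq:markov} then follows by applying this identity to the upper semianalytic function $\omega\mapsto f(X_t(\omega))$ and using $\big(f(X_t)\big)\circ\vartheta_s=f(X_{t+s})$.

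I expect the main obstacle to be steps (i)–(ii): verifying that the constant shift $T_c$ genuinely intertwines the two families of priors, where one must check carefully that the characteristics transform as claimed (this is where the canonical decomposition and the law-invariance of being a semimartingale on the canonical space are used) and that the "$dt\otimes dP$-a.e." characteristic constraint is transported correctly along the pushforward. The remaining bookkeeping — upper semianalyticity of $f\circ\vartheta_s$ and measurability of $\omega\mapsto\mathcal{E}^{X_s(\omega)}(f)$ — is routine given Theorem \ref{thm:Hollender4.29} and Remark \ref{rem:filtration-closedness}.
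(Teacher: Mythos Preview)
Your argument is correct and self-contained: the translation $T_c$ with $c=\omega(s)-x$ indeed furnishes a bijection $\mathcal{P}_x(s,\omega)\to\mathcal{P}_{\omega(s)}(\Theta)$, and the computation $\vartheta_s(\omega\otimes_s\omega')=T_c\omega'$ together with the invariance of differential characteristics under constant shifts yields the pointwise identity. The only place requiring some care is the transport of the $dt\otimes dP$-a.e.\ constraint along the pushforward, which you flag and which goes through because $T_c$ is a filtration-preserving homeomorphism.

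The paper, by contrast, does not argue directly: its proof is a one-line appeal to Proposition~\ref{prop:Hollender4.31} (i.e.\ Hollender's Proposition~4.31 and Lemma~4.32), which packages exactly this translation argument in the abstract setting of state-dependent uncertainty maps $H(t,\omega)=\bar H(\omega(t))$. So your route is genuinely different in presentation---you are effectively reproving Hollender's Lemma~4.32 in the concrete affine setting rather than invoking it. What your approach buys is transparency and independence from the cited reference; what the paper's approach buys is brevity and the reassurance that the measurability bookkeeping (upper semianalyticity of $x\mapsto\mathcal{E}^x(f)$, $\mathcal{F}^*_s$-measurability of the conditional expectation) has already been handled once and for all in \cite{hollender_levy-type_2016}.
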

	
	\begin{proof}
		It follows immediately from Proposition \ref{prop:Hollender4.31}.
	\end{proof}
    
    \section{Kolmogorov PIDE under model uncertainty}\label{sec:PIDE}
	In the classical setting, an affine process $X=(X_t)_{t\geq 0}$ with characteristics $(\beta,\alpha,\nu)$ is a Markov process with generator
	\begin{align}
		A_x^{(\beta,\alpha,\nu)}f(x)
		& =
		D_x f(x)^T\,\beta(x) +\frac{1}{2}\, \tr\Big[D_{x}^2f(x)\,\alpha(x)\Big]
		+\int_{\mathbb{R}^d}\left[f(x+z)-f(x)-D_xf(x)^T\,h(z)\right]\nu(x;dz).
	\end{align}
	The generator induces a PIDE, usually referred to as evolution or Kolmogorov equation given by
	\begin{equation}\label{eq:classicalPIDE}
		\partial_t v(t,x)-A_x^{(\beta, \alpha, \nu)}v(t,x)=0.
	\end{equation}
	Under suitable conditions on $\phi$, the value function $v(t,x):=E^x[\phi(X_t)]$ is a viscosity solution of  \eqref{eq:classicalPIDE} satisfying the initial condition $v(0,x)=\phi(x)$.
	The goal of this section is to establish a similar result for non-linear affine processes.
	More precisely, for a bounded, Lipschitz continuous function $\phi \in \text{Lip}_b(\mathbb{R}^d)$, a non-linear affine process $X$ with parameter set $\Theta$ and state space $\textnormal{S}$, we show that if the value function $v:\;\mathbb{R}_+\times\mathbb{R}^d\rightarrow\bar{\mathbb{R}}$ given by
	\begin{equation}\label{eq:def-value-function}
	v(t,x):=\mathcal{E}^x(\phi(X_t))
	\end{equation}
	is continuous, then $v$ is a viscosity solution of the PIDE 
	\begin{align}\label{eq:PIDE}
		\partial_t v(t,x)-\mathcal{A}_xv(t,x)&=0\\
		\label{eq:PIDE-initial} v(0,x)&=\phi(x),
	\end{align}
	where the operator $\mathcal{A}_y$ is defined by
	\begin{align}\label{eq:def-nonlinear-A}
		\mathcal{A}_y f(x)
		&:=
		\sup_{(\beta,\alpha,\nu)\in\Theta}\Bigg\{D_x f(x)^T\,\beta(y)+\frac{1}{2}\,\tr\Big[D_{x}^2 f(x)\,\alpha(y)\Big]
		+\int_{\mathbb{R}^d} \left[f(x+z)-f(x)-D_x f(x)^T\,h(z)\right]\,\nu(y;dz) \Bigg\}.
	\end{align}
	To be specific, we use the following definition of a viscosity solution, as in \cite{neufeld_nonlinear_2016}.
	\begin{defn}[Viscosity Solution]
		Let $G:\,\mathbb{R}^d\times\mathbb{R}^d\times\mathbb{S}\times\textnormal{C}^2_b(\mathbb{R}^d)\rightarrow\mathbb{R}$ be some operator.
		An upper semicontinuous function $v:\,\mathbb{R}_+\times\mathbb{R}^d\rightarrow \bar{\mathbb{R}}$ is called \emph{viscosity subsolution} of
		\begin{equation}\label{eq:def-PIDE}
			\partial_t v(t,x)+G(x,D_xv(t,x),D_x^2v(t,x),v(t,\cdot)) = 0
		\end{equation} in a domain $D\subseteq\mathbb{R}_+\times\mathbb{R}^d$ if for any interior point $(t,x)\in D^o$ and any $\psi\in\textnormal{C}^{2,3}_b(\mathbb{R}_+\times\mathbb{R}^d)$ with $\psi(t,x)=v(t,x)$ and $v\leq \psi$ on $\mathbb{R}_+\times\mathbb{R}^d$, we have
		\begin{equation}\label{eq:def-visc2}
			\partial_t\psi(t,x)+G(x,D_xv(t,x),D_x^2v(t,x),v(t,\cdot))\leq 0.
		\end{equation}
		The definition of a \emph{viscosity supersolution} of \eqref{eq:def-PIDE} in $D$ is obtained by reversing the inequalities and considering a lower semicontinuous function $v$. 
		A function $v$ is called \emph{viscosity solution} of \eqref{eq:def-PIDE} in $D$ if it is both a viscosity sub- and a supersolution.
	\end{defn}

	To obtain the desired result, we need to impose some boundedness conditions on $\Theta$.		
	Let $\|\cdot\|$ denote both the Euclidean norm on $\mathbb{R}^d$ and the spectral norm on $\mathbb{S}$. For $k\in\mathfrak{L}$, define
	\begin{equation}\label{def:extended_norm_Levy}
		\|k\|:=\int_{\mathbb{R}^d}\left(\|z\|^2\wedge\|z\|\right)\,|k(dz)|,
	\end{equation}
	which is clearly an extended norm on $\mathfrak{L}$ since $k(\{0\})=0$ for all $k\in\mathfrak{L}$.
	
	Moreover, define (extended) norms for $\beta=(\beta_0,\ldots,\beta_d)\in(\mathbb{R}^d)^{d+1}$, $\alpha=(\alpha_0,\ldots,\alpha_d)\in\mathbb{S}^{d+1}$ and $\nu=(\nu_0,\ldots,\nu_d)\in\mathfrak{L}^{d+1}$ by 
	\begin{equation}\label{eq:ExtendedNorm1}
		\trinorm{\beta}:=\sup_{x\in\mathbb{R}^d} \frac{\|\beta_0+(\beta_1,\ldots,\beta_d)\,x\|}{\|x\|+1},
	\end{equation}
	\begin{equation}\label{eq:ExtendedNorm2}
		\trinorm{\alpha}:=\sup_{x\in\mathbb{R}^d} \frac{\|\alpha_0+(\alpha_1,\ldots,\alpha_d)\,x\|}{\|x\|+1},
	\end{equation}
	\begin{equation}\label{eq:ExtendedNorm3}
		\trinorm{\nu}:=\sup_{x\in\mathbb{R}^d} \frac{\|\nu_0+(\nu_1,\ldots,\nu_d)\,x\|}{\|x\|+1}.
	\end{equation}	
	By the properties of $\|\cdot\|$ it follows that each map in \eqref{eq:ExtendedNorm1} - \eqref{eq:ExtendedNorm3} satisfy the defining properties of an (extended) norm.
	Next, we state similar conditions as in \cite{neufeld_nonlinear_2016}, \cite{fadina_affine_2019}. 
	
	\begin{con}\label{con:C}
		Let $\Theta\subseteq(\mathbb{R}^d)^{d+1}\times\mathbb{S}^{d+1}\times\mathfrak{L}^{d+1}$ be non-empty, closed and such that the following holds. 
		For all $x\in\mathbb{R}^d$, we have $\mathcal{P}_x(\Theta)\neq\emptyset$ and
		\begin{equation}\label{eq:conC2}
			\quad\lim_{\delta\downarrow0} \mathcal{K}_{\delta}(x)=0,
		\end{equation}
		where for $\delta>0$,
		\begin{equation}
			\mathcal{K}_\delta(x):=\mathcal{K}_{\delta} (x;\Theta):=\sup_{(\beta,\alpha,\nu)\in\Theta}\, \int_{\|z\|\leq\delta}\|z\|^2\,\nu(x;dz).
		\end{equation}
		Further,
		\begin{equation}\label{eq:conC1}
			\mathcal{K}:=\mathcal{K}(\Theta):=\sup_{(\beta,\alpha,\nu)\in\Theta} \Big\{\trinorm{\beta}+\trinorm{\alpha}+\trinorm{\nu}\Big\}<\infty.
		\end{equation}
	\end{con}
	
	Note that \eqref{eq:conC1} is equivalent to $\Theta$ being bounded with respect to the operator norm induced by the Euclidean norm on $\mathbb{R}^{d+1}$.
	In particular, \eqref{eq:conC1} implies that the linear components of the parameters are uniformly bounded.
	\begin{cor}\label{cor:lin_bound}
		Let $\Theta\subseteq(\mathbb{R}^d)^{d+1}\times\mathbb{S}^{d+1}\times\mathfrak{L}^{d+1}$ satisfy Asssumption \ref{con:C}. Then
		\begin{equation}\label{ineq:lin_bound}
			\sup_{(\beta,\alpha,\nu)\in\Theta}\Big\{\|(\beta_1,\ldots,\beta_d)\|_{op}+\|(\alpha_1,\ldots,\alpha_d)\|_{op}+\|(\nu_1,\ldots,\nu_d)\|_{op}\Big\}\leq 3\,\mathcal{K},
		\end{equation}
		where $\beta=(\beta_0,\beta_1,\ldots,\beta_d)$, $\alpha=(\alpha_0,\alpha_1,\ldots,\alpha_d)$ and $\nu=(\nu_0,\nu_1,\ldots,\nu_d)$, and $\|\cdot\|_{op}$ denote the extended operator norms induced by $\|\cdot\|$.
	\end{cor}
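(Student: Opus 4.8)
The plan is to reduce the statement to a single elementary fact applied three times: for each parameter, the extended norm $\trinorm{\cdot}$ dominates the operator norm of the \emph{linear} part. Summing the three resulting inequalities and using \eqref{eq:conC1} then gives the bound — in fact with $\mathcal{K}$ in place of $3\mathcal{K}$, so the stated estimate follows a fortiori.

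The key auxiliary fact I would isolate is the following. Let $(V,\|\cdot\|)$ be a real vector space with an (extended) norm, let $\gamma=(\gamma_0,\gamma_1,\ldots,\gamma_d)\in V^{d+1}$, and write $L_\gamma x:=\sum_{i=1}^d x^i\gamma_i$ for the associated linear map $\mathbb{R}^d\to V$, so that $\gamma(x)=\gamma_0+L_\gamma x$ in the notation of \eqref{eq:theta1}--\eqref{eq:theta3}. Then $\|L_\gamma\|_{op}:=\sup_{x\neq 0}\|L_\gamma x\|/\|x\|\le\trinorm{\gamma}$. The one computation that is not entirely routine is the symmetrization trick that removes the constant term: fixing a unit vector $x$ and $t>0$, linearity of $L_\gamma$ gives
\begin{equation*}
	t\,\|L_\gamma x\|=\tfrac12\bigl\|\,(\gamma_0+L_\gamma(tx))-(\gamma_0+L_\gamma(-tx))\,\bigr\|\le\tfrac12\bigl(\|\gamma_0+L_\gamma(tx)\|+\|\gamma_0+L_\gamma(-tx)\|\bigr)\le\trinorm{\gamma}\,(t+1),
\end{equation*}
where the last inequality is the definition of $\trinorm{\gamma}$ together with $\|tx\|=\|-tx\|=t$. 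Dividing by $t$ and letting $t\to\infty$ yields $\|L_\gamma x\|\le\trinorm{\gamma}$ for every unit $x$, hence $\|L_\gamma\|_{op}\le\trinorm{\gamma}$. If $\trinorm{\gamma}=\infty$ the inequality is trivial, so the extended-norm case needed for $\nu\in\mathfrak{L}^{d+1}$ requires no special treatment.

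Applying this with $V=\mathbb{R}^d$ and the Euclidean norm to $\beta$, with $V=\mathbb{S}$ and the spectral norm to $\alpha$, and with $V=\mathfrak{L}$ and the extended norm from \eqref{def:extended_norm_Levy} to $\nu$, one obtains $\|(\beta_1,\ldots,\beta_d)\|_{op}\le\trinorm{\beta}$, $\|(\alpha_1,\ldots,\alpha_d)\|_{op}\le\trinorm{\alpha}$ and $\|(\nu_1,\ldots,\nu_d)\|_{op}\le\trinorm{\nu}$ for every $(\beta,\alpha,\nu)\in\Theta$. Adding these three inequalities, taking the supremum over $\Theta$, and invoking \eqref{eq:conC1} gives $\sup_{(\beta,\alpha,\nu)\in\Theta}\{\|(\beta_1,\ldots,\beta_d)\|_{op}+\|(\alpha_1,\ldots,\alpha_d)\|_{op}+\|(\nu_1,\ldots,\nu_d)\|_{op}\}\le\mathcal{K}\le 3\,\mathcal{K}$, as claimed.

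There is no genuine obstacle; the only subtlety worth flagging is that $\trinorm{\cdot}$ simultaneously "sees" the constant term $\gamma_0$, which is exactly why the scaling $x\mapsto tx$, $t\to\infty$ (paired with the symmetrization that cancels $\gamma_0$) is needed rather than a direct estimate. One should also record the trivial consistency point that, for $d\ge 1$, the supremum defining $\|\cdot\|_{op}$ over the unit ball agrees with the one over the unit sphere, so the displayed argument indeed computes the operator norm.
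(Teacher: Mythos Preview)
Your proof is correct and in fact yields the sharper bound $\mathcal{K}$ rather than $3\mathcal{K}$, but the route is genuinely different from the paper's. The paper works on the unit sphere: for $\|x\|=1$ it writes $\|(\beta_1,\ldots,\beta_d)x\|\le\|\beta_0+(\beta_1,\ldots,\beta_d)x\|+\|\beta_0\|$, then observes that the first term equals $2\cdot\frac{\|\beta_0+(\beta_1,\ldots,\beta_d)x\|}{\|x\|+1}\le 2\trinorm{\beta}$ and the second is $\frac{\|\beta_0\|}{\|0\|+1}\le\trinorm{\beta}$, giving $\|(\beta_1,\ldots,\beta_d)\|_{op}\le 3\trinorm{\beta}$ (and likewise for $\alpha,\nu$). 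You instead cancel the constant term by symmetrization, $2tL_\gamma x=(\gamma_0+L_\gamma(tx))-(\gamma_0+L_\gamma(-tx))$, and then scale $t\to\infty$ to kill the additive $+1$ in the denominator of $\trinorm{\gamma}$. What your approach buys is the optimal constant~$1$ and a clean unified argument in any (extended) normed space; what the paper's approach buys is that it avoids a limiting procedure and stays at a single fixed scale, which is marginally more elementary. Both are short and valid.
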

	\begin{proof}
		By the definition of the operator norm and the triangle inequality, we have that
		\begin{align}\label{ineq:lin_bound_proof}
			\|(\beta_1,\ldots,\beta_d)\|_{op}
			& = \sup_{x\in\mathbb{R}^d;\,\|x\|=1} \left\|(\beta_1,\ldots,\beta_d)\,x\right\|\nonumber\\
			&\leq \sup_{x\in\mathbb{R}^d;\,\|x\|=1} \left\|\beta_0+(\beta_1,\ldots,\beta_d)\,x\right\|+\|\beta_0\|\nonumber\\
		    & = \sup_{x\in\mathbb{R}^d;\,\|x\|=1} 2\,\frac{\|\beta_0+(\beta_1,\ldots,\beta_d)\,x\|}{\|x\|+1}+\frac{\|\beta_0+(\beta_1,\ldots,\beta_d)\,0\|}{\|0\|+1} \nonumber\\
			&\leq 
			3\,\trinorm{\beta}.
		\end{align}			
		Analogously, we can show \eqref{ineq:lin_bound_proof} for $\alpha$ and $\nu$.
	\end{proof}

	In order to avoid subtleties related to measurability, from now on we work with the augmentation $\mathbb{F}^P_+$, as this does not affect the 
	the semimartingale characteristics, see Remark \ref{rem:filtration-closedness} Point \ref{rem:filtration}. 
	
	We start with some auxiliary results before we turn to the value function.
	We follow the line of argument in the proof of \cite[Lemma 3]{fadina_affine_2019} and adapt it to the case with jumps and consider a $d$-dimensional setting.
	\begin{lem}\label{lem:TimeCont0}
		Let $\Theta\subseteq(\mathbb{R}^d)^{d+1}\times\mathbb{S}^{d+1}\times\mathfrak{L}^{d+1}$ satisfy Assumption \ref{con:C}, and let $1\leq p\leq 2$.
		Then there exists an $\epsilon:=\epsilon(p)>0$ such that for all $0\leq t\leq \epsilon$, $x\in\mathbb{R}^d$ and $P\in\mathcal{P}_x(\Theta)$, we have
		\begin{equation}
			E_P \left[ \sup_{0\leq s\leq t} \|X_s-X_0\|^p \right]\leq C_{\mathcal{K},p}\,(1+\|x\|)^p\,(t^p+t^{\frac{p}{2}})
		\end{equation}
		for some constant $C_{\mathcal{K},p}$ independent of $t$, $x$ and $P$.
	\end{lem}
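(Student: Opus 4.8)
The plan is to use the canonical semimartingale decomposition from \eqref{eq:CanonicalDecomposition}, written in the differential-characteristics form from \eqref{eq:canonical-1}, and estimate each of the four terms (drift, continuous martingale, compensated small jumps, large jumps) separately in $L^p$, uniformly over $P\in\mathcal{P}_x(\Theta)$. The key point is that on the set of priors the characteristics $(b^P,a^P,k^P)$ take values in $\Theta(X_s)$, so Assumption \ref{con:C} — specifically the bound \eqref{eq:conC1} together with Corollary \ref{cor:lin_bound} — controls them linearly in $\|X_s\|$. Since $p\le 2$, I expect to choose $\epsilon(p)$ small and set up a Gronwall-type argument: bound $E_P[\sup_{0\le s\le t}\|X_s-X_0\|^p]$ in terms of $\int_0^t E_P[(1+\|X_s\|)^p]\,ds \le \int_0^t (1+\|x\|)^p\,ds + \int_0^t E_P[\sup_{0\le u\le s}\|X_u-X_0\|^p]\,ds$, using $(1+\|X_s\|)\le (1+\|x\|)+\|X_s-X_0\|$ and the elementary inequality $(a+b)^p\le 2^{p-1}(a^p+b^p)$.

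\textbf{Term-by-term estimates.} First, for the drift term $\int_0^t b^P_s\,ds$, I would apply Jensen's (or Hölder's) inequality to get $\|\int_0^t b^P_s\,ds\|^p \le t^{p-1}\int_0^t \|b^P_s\|^p\,ds$, and then use $\|b^P_s\|=\|\beta(X_s)\|\le \trinorm{\beta}(1+\|X_s\|)\le \mathcal{K}(1+\|X_s\|)$. Second, for the continuous local martingale part $^cM^P$ with $d[^cM^P]_s=a^P_s\,ds$, I would use the Burkholder–Davis–Gundy inequality: $E_P[\sup_{s\le t}\|{}^cM^P_s\|^p] \le C_p\,E_P[([^cM^P]_t)^{p/2}] \le C_p\,E_P[(\int_0^t \|a^P_s\|\,ds)^{p/2}]$, then again bound $\|a^P_s\|\le \mathcal{K}(1+\|X_s\|)$ and pull out $t^{p/2}$ (using $p/2\le 1$, so this power is genuinely the smaller one for small $t$). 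Third, for the compensated small-jump martingale $^jM^P$, I would again use BDG, bounding its quadratic variation by $\int_0^t\int \|h(z)\|^2\,\mu(ds,dz)$, whose compensator is $\int_0^t\int\|h(z)\|^2 k^P_s(dz)\,ds$; since $h$ is a truncation function, $\|h(z)\|^2\le C(\|z\|^2\wedge 1)\le C(\|z\|^2\wedge\|z\|)$ up to the cut-off, so $\int\|h(z)\|^2 k^P_s(dz)\le C'\|k^P_s\|\le C'\mathcal{K}(1+\|X_s\|)$ with $\|\cdot\|$ the extended Lévy norm \eqref{def:extended_norm_Levy}, modulo handling the region where $\|z\|\ge 1$ which is absorbed into the large-jump term. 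Fourth, the unbounded jump part $J^P_t=\int_0^t\int (z-h(z))\,\mu(ds,dz)$ is a finite sum of jumps of size $\gtrsim$ const; here I would not compensate but estimate $\|J^P_t\|^p\le (\int_0^t\int\|z-h(z)\|\,\mu(ds,dz))^p$ or, more carefully, use that $\|z-h(z)\|$ is supported on $\{\|z\|\ge c\}$ where $\|z\|\le \|z\|^2/c \wedge\ldots$ — here I would instead bound its $p$-th moment (for $p\le 2$) by the compensator of $\int\|z-h(z)\|^2\vee\|z-h(z)\|\,\mu$, which is again $\lesssim\mathcal{K}(1+\|X_s\|)$.

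\textbf{Conclusion via Gronwall.} Combining the four bounds, there is a constant $C=C(\mathcal{K},p)$ with
\begin{equation*}
E_P\Big[\sup_{0\le s\le t}\|X_s-X_0\|^p\Big]\le C\,(t^p+t^{p/2})\,(1+\|x\|)^p + C\,(t^{p-1}+t^{p/2-1}+1)\int_0^t E_P\Big[\sup_{0\le u\le s}\|X_u-X_0\|^p\Big]\,ds,
\end{equation*}
where I have first replaced $(1+\|X_s\|)^p$ by $2^{p-1}(1+\|x\|)^p+2^{p-1}\sup_{u\le s}\|X_u-X_0\|^p$ and split the integrals. Restricting to $t\le\epsilon$ for suitable small $\epsilon=\epsilon(p)$ makes the coefficient of the integral term bounded by a fixed constant, and Gronwall's lemma then yields $E_P[\sup_{s\le t}\|X_s-X_0\|^p]\le C_{\mathcal{K},p}(1+\|x\|)^p(t^p+t^{p/2})$ as claimed, with the constant independent of $x$, $t$ and $P$ since every estimate used only the uniform bound $\mathcal{K}$.

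\textbf{Main obstacle.} The delicate point is the bookkeeping of the jump terms: one must split $\mathbb{R}^d$ into $\{\|z\|\le 1\}$ and $\{\|z\|> 1\}$ so that small jumps (handled with BDG on the compensated martingale and $\|z\|^2$-control) and large jumps (finitely many, handled with the $\|z\|$-part of the extended Lévy norm) are treated with the right integrability, and to verify that the constants coming from the truncation function $h$ and from BDG are genuinely independent of $P$. A secondary technical care is justifying finiteness of all expectations a priori (e.g. by a localization/stopping argument along a sequence of stopping times reducing the local martingales), so that the Gronwall step is legitimate; this is routine but should be mentioned.
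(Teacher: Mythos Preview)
Your approach is essentially the paper's: decompose $X-X_0$ into drift, continuous martingale, purely discontinuous martingale, and large-jump parts via \eqref{eq:CanonicalDecomposition}, bound each in $L^p$ using BDG and the linear growth $\|b^P\|,\|a^P\|,\|k^P\|\le\mathcal{K}(1+\|X\|)$ from Assumption~\ref{con:C}, and close with a self-referencing inequality. The paper does exactly this.

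The one slip is in your closing step. Your displayed inequality carries a coefficient $C(t^{p-1}+t^{p/2-1}+1)$ in front of the time integral, and for $1\le p<2$ the term $t^{p/2-1}$ blows up as $t\downarrow 0$, so it is \emph{not} bounded on $[0,\epsilon]$ for any $\epsilon>0$; your sentence ``restricting to $t\le\epsilon$ makes the coefficient of the integral term bounded'' is therefore false as written. In fact the BDG terms do not naturally produce a time integral at all: after BDG you face $E_P\big[(\int_0^t\|a^P_s\|\,ds)^{p/2}\big]$, and since $p/2\le 1$ there is no Jensen/H\"older step pulling the power inside. The paper instead bounds $\int_0^t(\|X_s\|+1)\,ds\le t\,(\sup_{s\le t}\|X_s\|+1)$ and uses $(\sup_{s\le t}\|X_s\|+1)^{p/2}\le(\sup_{s\le t}\|X_s\|+1)^p$ (the base is $\ge 1$) to arrive directly at
\[
E_P\Big[\sup_{0\le s\le t}\|X_s-X_0\|^p\Big]\le \tilde{C}\,(t^p+t^{p/2})\Big(E_P\Big[\sup_{0\le s\le t}\|X_s-X_0\|^p\Big]+(1+\|x\|)^p\Big),
\]
then chooses $\epsilon$ with $\tilde{C}(\epsilon^p+\epsilon^{p/2})<1$ and \emph{absorbs} the first right-hand term into the left. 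This absorption replaces your Gronwall step and avoids the singular coefficient. Equivalently, your route is rescued by the crude bound $\int_0^t g(s)\,ds\le t\,g(t)$, which turns your coefficient into $tB(t)=C(t^p+t^{p/2}+t)$ --- small for small $t$ --- but that is precisely absorption, not Gronwall.
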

	\begin{proof}
		For some $x\in \mathbb{R}^d$, fix a $P\in\mathcal{P}_x(\Theta)$ and let $(b^P,a^P,k^P)$ denote the associated differential characteristics.
		Then we have 
		\begin{align}\label{ineq:cont0_drift}
			E_P\left[\sup_{0\leq s\leq t}\left\|B^P_s\right\|^p\,\right]
			&=
			E_P\left[\sup_{0\leq s\leq t}\left\|\int_0^s b^P_u\,du\right\|^p\,\right]\nonumber\\ 
			\expl{} & = 
			E_P\left[\left(\int_0^t \left\|b^P_u\right\|\,du\right)^p\,\right] \nonumber\\ 
			&\leq 
			\mathcal{K}^p\, E_P\left[\left(\int_0^t \left(\left\|X_u\right\|+1\right)\,du\right)^p\,\right] \nonumber\\ 
		     & {\leq} 
		 {\mathcal{K}^p\,t^p\, E_P\left[\left(\sup_{0\leq s\leq t} \left\|X_s\right\|+1\right)^p\,\right]}\nonumber\\ 
			& \leq 
			\mathcal{K}^p\,t^p\, E_P\left[\left(\sup_{0\leq s\leq t} \left\|X_s-X_0\right\|+\|X_0\|+1\right)^p\,\right]\nonumber\\ 
			&\leq 
			\mathcal{K}^p\,t^p\,2^{p-1} \left(E_P\left[\sup_{0\leq s\leq t} \left\|X_s-X_0\right\|^p\right]+\left(\|x\|+1\right)^p\right).
		\end{align}
		The last step follows by Jensen's inequality, as $x\rightarrow x^p$ is convex on $\mathbb{R}_+$ for $p\geq 1$. Moreover, we will use repeatedly the following inequality for a different number of non-negative summands, i.e., 
		\begin{equation}
			\left(\sum_{i=1}^n y_i\right)^p\leq n^{p-1}\,\sum_{i=1}^n y_i^p.
		\end{equation}
		By the Burkholder-Davis-Gundy inequality in \eqref{eq:BDG}, we have for the continuous local martingale part $\,^c\!M^{P}$ of $X$ that 
		\begin{align}\label{ineq:cont0_cMart}
			E_P\left[\sup_{0\leq s\leq t}\left\|\,^c\!M^{P}_s\right\|^p\right]
			&\leq 
			C_{p,d}\,E_P\left[\Big\|\left[\,^c\!M^{P}\right]_t\Big\|^{\frac{p}{2}}\right]\nonumber\\
			&= 
			C_{p,d}\,E_P\left[\Big\|\int_{0}^t a^P_u\,du\,\Big\|^{\frac{p}{2}}\right]\nonumber\\
			&\leq 
			{C_{p,d}\,\mathcal{K}^{\frac{p}{2}}\,E_P\left[\left(\int_{0}^t \left(\left\|X_u\right\|+1\right)\,du\right)^{\frac{p}{2}}\,\right]}\nonumber\\
			 & {\leq} 
			 {C_{p,d}\,\mathcal{K}^{\frac{p}{2}}\,t^{\frac{p}{2}}\,E_P\left[\left(\sup_{0\leq s\leq t} \left\|X_s\right\|+1\right)^{\frac{p}{2}}\,\right]}\nonumber\\
		     &{\leq} 
			 {C_{p,d}\,\mathcal{K}^{\frac{p}{2}}\,t^{\frac{p}{2}}\,E_P\left[\left(\sup_{0\leq s\leq t} \left\|X_s\right\|+1\right)^{p}\,\right]}\nonumber\\
			&\leq 
			C_{p,d}\,\mathcal{K}^{\frac{p}{2}}\,t^{\frac{p}{2}}\,2^{p-1}\left(E_P\left[\sup_{0\leq s\leq t} \left\|X_s-X_0\right\|^p\right]+\left(\|x\|+1\right)^p\right),
		\end{align}
		where $C_{p,d}$ is the constant from the Burkholder-Davis-Gundy inequality.
		Similarly, we obtain for the purely discontinuous local martingale part $\,^j\!M^{P}$,
		\begin{align}\label{ineq:cont0_dMart}
			E_P\left[\sup_{0\leq s\leq t}\left\|\,^j\!M^{P}_s\right\|^p\right]
			&\leq 
			C_{p,d}\,E_P\left[\Big\|\left[\,^j\!M^{P}\right]_t\Big\|^{\frac{p}{2}}\right]\nonumber\\
			&\leq C_{p,d}\,E_P\left[\left(\int_{0}^t\int_{\mathbb{R}^d}\|h(z)\|^2\,\mu(ds,dz)\right)^{\frac{p}{2}}\, \right]\nonumber\\
			& \leq C_{p,d}\,C_h^{p}\,E_P\left[\left(\int_{0}^t\int_{\mathbb{R}^d}(\|z\|^2\wedge\,1)\,\mu(ds,dz)\right)^{\frac{p}{2}}\, \right] \nonumber\\
			& \leq C_{p,d}\,C_h^{p}\,E_P\left[\int_{0}^t\int_{\mathbb{R}^d}(\|z\|^2\wedge\,1)\,\mu(ds,dz) \right]^{\frac{p}{2}} \nonumber\\ 
			 & \leq C_{p,d}\,C_h^{p}\,E_P\left[\int_{0}^t\int_{\mathbb{R}^d}(\|z\|^2\wedge\,1)\,k_u^P(dz)\,du \right]^{\frac{p}{2}} \nonumber\\
			&\leq C_{p,d}\,C_h^{p}\,E_P\left[\int_{0}^t\int_{\mathbb{R}^d}(\|z\|^2\wedge\,\|z\|)\,k_u^P(dz)\,du \right]^{\frac{p}{2}}\nonumber\\
			&\leq C_{p,d}\,C_h^{p}\,\mathcal{K}^{\frac{p}{2}}\,E_P\left[\int_{0}^t\left(\|X_u\|+1\right)\,du \right]^{\frac{p}{2}}\nonumber\\
			&\leq C_{p,d}\,C_h^{p}\,\mathcal{K}^{\frac{p}{2}}\,t^{\frac{p}{2}}\, E_P\left[\sup_{0\leq s\leq t}\|X_s\|+1 \right]^{\frac{p}{2}}\nonumber\\
			&\leq C_{p,d}\,C_h^{p}\,\mathcal{K}^{\frac{p}{2}}\,t^{\frac{p}{2}}\, E_P\left[\sup_{0\leq s\leq t}\|X_s\|+1 \right]\nonumber\\
			&\leq C_{p,d}\,C_h^{p}\,\mathcal{K}^{\frac{p}{2}}\,t^{\frac{p}{2}}\, E_P\left[\sup_{0\leq s\leq t}\left(\|X_s\|+1\right)^p \right]\nonumber\\
			&\leq C_{p,d}\,C_h^{p}\,\mathcal{K}^{\frac{p}{2}}\,t^{\frac{p}{2}}\,2^{p-1}\left(E_P\left[\sup_{0\leq s\leq t}\|X_s-X_0\|^p \right]+(\|x\|+1)^p\right),
		\end{align}
		where $C_h$ depends only on $h$ and is such that $\|z-h(z)\|\leq C_h\,(\|z\|^2\wedge\|z\|)$ and $\|h(z)\|\leq C_h\,(\|z\|\wedge 1)$ for all $z\in\mathbb{R}^d$, and $\|z\|,\|h(z)\|\leq C_h\,(\|z\|^2\wedge\|z\|)$ outside some neighbourhood of zero.\footnote{
			Such a constant exists.
			By the definition of $h$, there exists a $\delta \in ]0,1[$ such that $h(z)=z$ for all $z$ with $\|z\|\leq \delta$.
			Fix such a $\delta$ and set $C_h:=\delta^{-2}\,\|h\|_\infty$. 
			Then the desired inequalities hold.
			For a detailed proof, see Corollary \ref{cor:truncation}.
		}
		Further, for the unbounded jump part $J^{P}$,
		\begin{align}\label{ineq:cont0_jump}
			E_P\left[\sup_{0\leq s\leq t}\left\| J^{P}_s\right\|^p\right]
			&= 
			{E_P\left[\sup_{0\leq s\leq t}\left\| \int_0^s\int_{\mathbb{R}^d}[z-h(z)]\,\mu(du,dz)\right\|^p\,\right]}\nonumber\\
		 &{=} 
			E_P\left[ \left(\int_0^t\int_{\mathbb{R}^d}\left\|z-h(z)\right\|\,\mu(du,dz)\right)^p\,\right]\nonumber\\
			&\leq 
			C_h^p\, E_P\left[ \left(\int_0^t\int_{\mathbb{R}^d}\left(\left\|z\right\|^2\wedge\|z\|\right)\,k^P_u(dz)\,du\right)^p\,\right]\nonumber\\
			&\leq 
			C_h^p\,\mathcal{K}^p\, E_P\left[ \left(\int_0^t\left(\|X_u\|+1\right)\,du\right)^p\,\right]\nonumber\\
			&\leq 
			C_h^p\,\mathcal{K}^p\,t^p\,2^{p-1} \left(E_P\left[ \sup_{0\leq s\leq t}\|X_s-X_0\|^p\right]+(\|x\|+1)^p\right).
		\end{align}
		We can choose $C_{p,d},\,C_h\geq 1$.
		Then by the canonical semimartingale decomposition in \eqref{eq:CanonicalDecomposition}, the triangle inequality, and the inequalities \eqref{ineq:cont0_drift} - \eqref{ineq:cont0_jump}, we have for all $\epsilon\geq t$,
		\begin{align}
			E_P\left[\sup_{0\leq s\leq t}\|X_s-X_0\|^p\right]
			&\leq 
			4^{p-1}\, E_P\left[\sup_{0\leq s\leq t}\|B^P_s\|^p+\sup_{0\leq s\leq t} \|\,^c\!M^{P}_s\|^p+\sup_{0\leq s\leq t}\|\,^j\!M^{P}_s\|^p+\sup_{0\leq s\leq t}\|J^{P}_s\|^p\right]\nonumber\\
		     & {\leq} 
			 {4^{p-1}\,2^p\,C_h^p\,C_{p,d}\left(\,\mathcal{K}^p\,t^p +\mathcal{K}^{\frac{p}{2}}\,t^{\frac{p}{2}}\right)\!\! \left(E_P\left[ \sup_{0\leq s\leq t}\|X_s-X_0\|^p\right] +(\|x\|+1)^p\right) } \nonumber\\
			& \leq \underbrace{2^{3p-2}\,C_h^p\,C_{p,d}\,(\mathcal{K}^p+1)}_{=:\,\tilde{C}}\,(t^p+t^{\frac{p}{2}})\!\left(E_P\left[ \sup_{0\leq s\leq t}\|X_s-X_0\|^p\right]+(\|x\|+1)^p\right) \nonumber\\
			&\leq
			\tilde{C}\,(\epsilon^p+\epsilon^{\frac{p}{2}})\,E_P\left[ \sup_{0\leq s\leq t}\|X_s-X_0\|^p\right]+\tilde{C}\,(t^p+t^{\frac{p}{2}})(\|x\|+1)^p.
		\end{align}
		Fix a sufficiently small $\epsilon:=\epsilon(p)>0$ such that $1-\tilde{C}\,(\epsilon^p+\epsilon^{\frac{p}{2}})>0$.
		Then we obtain
		\begin{equation}
			E_P\left[\sup_{0\leq s\leq t}\|X_s-X_0\|^p\right]\leq \underbrace{\frac{\tilde{C}}{1-\tilde{C}\,(\epsilon^p+\epsilon^{\frac{p}{2}})}}_{=:C_{\mathcal{K},p}}\,(\|x\|+1)^p\,(t^p+t^{\frac{p}{2}}),
		\end{equation}
		which proves Lemma \ref{lem:TimeCont0}.
	\end{proof}
	\begin{cor}\label{cor:TimeCont0}
		Let $\Theta\subseteq(\mathbb{R}^d)^{d+1}\times\mathbb{S}^{d+1}\times\mathfrak{L}^{d+1}$ satisfy Assumption \ref{con:C}, and let $1\leq p\leq 2$.
		Then there exists an $\epsilon:=\epsilon(p)>0$ such that for all $0\leq t\leq \epsilon$ and $x\in\mathbb{R}^d$, 
		\begin{equation}
			\mathcal{E}^x \left( \sup_{0\leq s\leq t} \|X_s-X_0\|^p \right)\leq C_{\mathcal{K},p}\,(1+\|x\|)^p\,(t^p+t^{\frac{p}{2}})
		\end{equation}
		for some constant $C_{\mathcal{K},p}$ independent of $t$ and $x$.
	\end{cor}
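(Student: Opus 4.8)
The plan is to obtain Corollary \ref{cor:TimeCont0} directly from Lemma \ref{lem:TimeCont0} by passing to the supremum over priors, so the work is essentially bookkeeping. First I would check that the functional $\omega\mapsto\sup_{0\leq s\leq t}\|X_s(\omega)-X_0(\omega)\|^p$ is $\mathcal{F}$-measurable, so that $\mathcal{E}^x$ applies to it: since every $\omega\in\Omega=\mathrm{D}(\mathbb{R}_+,\mathbb{R}^d)$ is càdlàg, the supremum over $s\in[0,t]$ agrees with the supremum over the countable set $D_t:=([0,t]\cap\mathbb{Q})\cup\{t\}$, and each map $\omega\mapsto\|X_q(\omega)-X_0(\omega)\|^p$ with $q\in D_t$ is $\mathcal{F}$-measurable; a countable supremum of measurable functions is measurable.

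Next I would fix $p\in[1,2]$ and take $\epsilon:=\epsilon(p)>0$ and the constant $C_{\mathcal{K},p}$ to be exactly those furnished by Lemma \ref{lem:TimeCont0}. For any $x\in\mathbb{R}^d$, Assumption \ref{con:C} ensures $\mathcal{P}_x(\Theta)\neq\emptyset$, and Lemma \ref{lem:TimeCont0} states that for all $0\leq t\leq\epsilon$ and \emph{every} $P\in\mathcal{P}_x(\Theta)$,
\[
E_P\!\left[\sup_{0\leq s\leq t}\|X_s-X_0\|^p\right]\leq C_{\mathcal{K},p}\,(1+\|x\|)^p\,(t^p+t^{\frac{p}{2}}),
\]
where the right-hand side does not depend on $P$.

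Finally, taking the supremum over $P\in\mathcal{P}_x(\Theta)$ and invoking the definition \eqref{eq:defSublinearExpectation} of the sublinear expectation yields
\[
\mathcal{E}^x\!\left(\sup_{0\leq s\leq t}\|X_s-X_0\|^p\right)=\sup_{P\in\mathcal{P}_x(\Theta)}E_P\!\left[\sup_{0\leq s\leq t}\|X_s-X_0\|^p\right]\leq C_{\mathcal{K},p}\,(1+\|x\|)^p\,(t^p+t^{\frac{p}{2}}),
\]
which is the assertion. I do not expect any genuine obstacle here: the threshold $\epsilon(p)$ and the constant $C_{\mathcal{K},p}$ produced in Lemma \ref{lem:TimeCont0} are already uniform in both $P$ and $x$, so they carry over verbatim to $\mathcal{E}^x$; the only point that must be spelled out is the measurability of the path supremum noted above.
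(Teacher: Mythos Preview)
Your proposal is correct and follows exactly the paper's approach: the paper's own proof is simply ``Follows immediately from Lemma \ref{lem:TimeCont0}.'' Your additional measurability remark is harmless extra detail, but the core argument---taking the supremum over $P\in\mathcal{P}_x(\Theta)$ in the uniform bound from Lemma \ref{lem:TimeCont0}---is identical.
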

	\begin{proof}
		Follows immediately from Lemma \ref{lem:TimeCont0}.
	\end{proof}
	\begin{lem}\label{lem:integrability}
		Let $\Theta\subseteq(\mathbb{R}^d)^{d+1}\times\mathbb{S}^{d+1}\times\mathfrak{L}^{d+1}$ satisfy Assumption \ref{con:C}.
		Then for all $x\in\mathbb{R}^d$ and $t\geq0$,
		\begin{equation*}
			\mathcal{E}^x(\|X_t\|)<\infty.
		\end{equation*}
	\end{lem}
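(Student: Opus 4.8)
The plan is to reduce the claim $\mathcal{E}^x(\|X_t\|)<\infty$ for arbitrary $t\geq 0$ to the short-time estimate already established in Lemma \ref{lem:TimeCont0} (equivalently Corollary \ref{cor:TimeCont0}) via an iteration over a finite partition of $[0,t]$, using the dynamic programming principle of Proposition \ref{prop:DynamicProgramming} together with the Markov property from Corollary \ref{lem:SubMarkov} to stitch the pieces back together. The key quantity to control is the growth of the function $x\mapsto\mathcal{E}^x(\|X_s\|)$ over a single small time step.

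First I would fix $p=1$ and let $\epsilon=\epsilon(1)>0$ be the threshold from Lemma \ref{lem:TimeCont0}, so that for all $0\le s\le\epsilon$ and all $y\in\mathbb{R}^d$ we have, by the triangle inequality and sublinearity of $\mathcal{E}^y$,
\begin{equation}
	\mathcal{E}^y(\|X_s\|)\le \|y\|+\mathcal{E}^y\Big(\sup_{0\le u\le s}\|X_u-X_0\|\Big)\le \|y\|+C_{\mathcal{K},1}(1+\|y\|)(s+\sqrt{s})\le C_1(1+\|y\|),
\end{equation}
where $C_1:=1+C_{\mathcal{K},1}(\epsilon+\sqrt{\epsilon})$ is a finite constant independent of $y$ and of $s\in[0,\epsilon]$. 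Note $\mathcal{P}_y(\Theta)\neq\emptyset$ for all $y$ by Assumption \ref{con:C} together with Corollary \ref{cor:trivial_uncertainty_subset}, so these sublinear expectations are genuine. This is the single-step affine growth bound.

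Next, given an arbitrary $t\ge 0$, choose $n\in\mathbb{N}$ with $h:=t/n\le\epsilon$ and write $t=nh$. Applying the Markov property \eqref{eq:markov} and the aggregation property \eqref{eq:aggregation_property} of Proposition \ref{prop:DynamicProgramming} with the stopping times $\sigma\equiv 0$, $\tau\equiv (k-1)h$, together with monotonicity of the conditional sublinear expectations, I would show inductively that
\begin{equation}
	\mathcal{E}^x(\|X_{kh}\|)=\mathcal{E}^x\Big(\mathcal{E}^{X_{(k-1)h}}(\|X_h\|)\Big)\le \mathcal{E}^x\big(C_1(1+\|X_{(k-1)h}\|)\big)=C_1+C_1\,\mathcal{E}^x(\|X_{(k-1)h}\|),
\end{equation}
using the one-step bound in the first inequality. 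Iterating from $k=n$ down to $k=0$, with $\mathcal{E}^x(\|X_0\|)=\|x\|$, yields $\mathcal{E}^x(\|X_t\|)\le C_1^{\,n}\|x\|+C_1\sum_{j=0}^{n-1}C_1^{\,j}<\infty$, which is the assertion. (Here one uses that $\|X_h\|$ is bounded below by a function of the starting point and that $x\mapsto\mathcal{E}^x(\|X_h\|)$ is upper semianalytic, hence universally measurable, so that the outer expectation is well defined — this is guaranteed by Theorem \ref{thm:Hollender4.29}.)

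The main obstacle I anticipate is making the application of the Markov/dynamic-programming identities fully rigorous for the specific (unbounded, not upper semicontinuous) functional $\omega\mapsto\|X_{kh}(\omega)\|$: Corollary \ref{lem:SubMarkov} and Proposition \ref{prop:DynamicProgramming} are stated for upper semianalytic functions, and one must check that $\|X_{kh}\|$ — or a suitable truncation $\|X_{kh}\|\wedge m$ followed by monotone passage to the limit via the monotone convergence property of sublinear expectations — falls within their scope, and that the identity $\mathcal{E}^x_0(\mathcal{E}^{X_{(k-1)h}}_0(f))=\mathcal{E}^x_0(f\circ\vartheta_{(k-1)h})$ with $f=\|X_h\|$ composes correctly so that $f\circ\vartheta_{(k-1)h}=\|X_{kh}\|$ as functions on $\Omega_x$. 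Once the measurability bookkeeping is settled, the estimate itself is the elementary geometric-series argument above.
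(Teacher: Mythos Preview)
Your proposal is correct and follows essentially the same strategy as the paper: combine the short-time estimate from Lemma~\ref{lem:TimeCont0}/Corollary~\ref{cor:TimeCont0} with the tower property \eqref{eq:Aggregation} and the Markov property \eqref{eq:markov}, and iterate over a finite partition of $[0,t]$ to obtain a geometric growth bound. The only cosmetic difference is that the paper steps backward in chunks of size $\epsilon$ (deriving the recursion $\mathcal{E}^x(\|X_t\|)\le(C_{\mathcal{K},1}(\epsilon+\sqrt{\epsilon})+1)(\mathcal{E}^x(\|X_{t-\epsilon}\|)+1)$), whereas you step forward in equal chunks $h=t/n\le\epsilon$; your careful remark on the upper-semianalyticity/measurability bookkeeping is a point the paper leaves implicit.
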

	\begin{proof}
		Let $\epsilon:=\epsilon(1)>0$ be the constant from Corollary \ref{cor:TimeCont0}, and let $x\in\mathbb{R}^d$.
		For $t\leq\epsilon$, the statement follows directly from Corollary \ref{cor:TimeCont0}.
		Suppose that $t>\epsilon$.
		The tower property in \eqref{eq:Aggregation} and the Markov property in \eqref{eq:markov} give
		\begin{align}
			\mathcal{E}^x(\|X_t\|)
			&
			= \mathcal{E}^x( \mathcal{E}^x_{t-\epsilon}(\|X_t\|)) \nonumber\\
			&=
			\mathcal{E}^x(\mathcal{E}^{X_{t-\epsilon}}(\|X_\epsilon\|))\nonumber\\
			&\leq
			\mathcal{E}^x(\mathcal{E}^{X_{t-\epsilon}}(\|X_\epsilon-X_0\|))+\mathcal{E}^x(\mathcal{E}^{X_{t-\epsilon}}(\|X_0\|))\nonumber\\
			&=
			\mathcal{E}^x(\mathcal{E}^{X_{t-\epsilon}}(\|X_\epsilon-X_0\|))+\mathcal{E}^x(\mathcal{E}^{y}(\|X_0\|)\vert_{y=X_{t-\epsilon}})\nonumber\\
			&=
			\mathcal{E}^x(\mathcal{E}^{X_{t-\epsilon}}(\|X_\epsilon-X_0\|))+\mathcal{E}^x(\|y\|)\vert_{y=X_{t-\epsilon}}\nonumber\\
			&\leq 
			\mathcal{E}^x\left(\mathcal{E}^{X_{t-\epsilon}}\left(\sup_{0\leq u\leq \epsilon}\|X_u-X_0\|\right)\right)+\mathcal{E}^x(\|X_{t-\epsilon}\|)\nonumber\\
			&\leq 
			C_{\mathcal{K},1}\,(\epsilon+\epsilon^{\frac{1}{2}})\,(\mathcal{E}^x(\|X_{t-\epsilon}\|)+1)+\mathcal{E}^x(\|X_{t-\epsilon}\|) \label{eq:Corrections_Explanation_1}\\
			&\leq 
			\left(C_{\mathcal{K},1}\,(\epsilon+\epsilon^{\frac{1}{2}})+1\right)\,(\mathcal{E}^x(\|X_{t-\epsilon}\|)+1), \label{ineq:Cont_t-eps}
		\end{align}
		where we use Corollary \ref{cor:TimeCont0} in \eqref{eq:Corrections_Explanation_1}.
		For every $t>\epsilon$, there exists an $N\in\mathbb{N}$ such that $N\epsilon < t\leq (N+1)\,\epsilon$.
		Hence, the statement follows by repeating the procedure $N$ times and applying Corollary \ref{cor:TimeCont0} to $\mathcal{E}^x(\|X_{t-N\epsilon}\|)$.
	\end{proof}
	
	Now we can turn to the value function $v$. We start by proving its continuity and then show that it is a viscosity solution of \eqref{eq:PIDE}.
	
	\begin{lem}\label{lem:ValueF_ts}
		Let $\Theta\subseteq(\mathbb{R}^d)^{d+1}\times\mathbb{S}^{d+1}\times\mathfrak{L}^{d+1}$ satisfy Assumption \ref{con:C} and $\phi\in\textnormal{Lip}_b(\mathbb{R}^d)$.
		The value function $v$ from \eqref{eq:def-value-function} satisfies for all $t,s\geq 0$ and $x\in\mathbb{R}^d$,
		\begin{equation}
			v(t+s,x)=\mathcal{E}^x(v(t,X_s)).
		\end{equation}
	\end{lem}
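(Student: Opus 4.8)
The plan is to derive the flow identity $v(t+s,x)=\mathcal{E}^x\bigl(v(t,X_s)\bigr)$ by feeding the dynamic programming principle of Proposition~\ref{prop:DynamicProgramming}, specialised to the deterministic times $\sigma=0$ and $\tau=s$, into the Markov property of Corollary~\ref{lem:SubMarkov}. As preliminaries I would first record that, since $\phi\in\textnormal{Lip}_b(\mathbb{R}^d)$, the function $\omega\mapsto\phi(X_{t+s}(\omega))=\phi(\omega(t+s))$ is bounded and Borel, hence upper semianalytic, so both cited results apply to it; that $v$ is bounded by $\|\phi\|_\infty$ and in particular real-valued (using $\mathcal{P}_x(\Theta)\neq\emptyset$ from Assumption~\ref{con:C}); and that, by the final assertion of Theorem~\ref{thm:Hollender4.29}, $x\mapsto v(t,x)=\mathcal{E}^x(\phi(X_t))$ is upper semianalytic on $\mathbb{R}^d$, so that $v(t,X_s)=v(t,\cdot)\circ X_s$ is upper semianalytic and bounded on $\Omega$ (the $X_s$-preimage of an analytic set is analytic), which makes $\mathcal{E}^x(v(t,X_s))$ well defined.

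The main argument then has two steps. First, applying the Markov property of Corollary~\ref{lem:SubMarkov} (equation \eqref{eq:markov}) with the function $\phi(X_t)$ gives, on $\Omega_x$,
\[
\mathcal{E}^x_s\bigl(\phi(X_{t+s})\bigr)=\mathcal{E}^{X_s}\bigl(\phi(X_t)\bigr)=v(t,X_s),
\]
the last equality being just the definition \eqref{eq:def-value-function} of $v$ evaluated at the random point $X_s$. Second, the aggregation property \eqref{eq:aggregation_property} of Proposition~\ref{prop:DynamicProgramming} with $\sigma=0$, $\tau=s$ and $f=\phi(X_{t+s})$ yields $\mathcal{E}^x_0\bigl(\mathcal{E}^x_s(\phi(X_{t+s}))\bigr)=\mathcal{E}^x_0\bigl(\phi(X_{t+s})\bigr)$ on $\Omega$. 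Restricting both sides to $\Omega_x$, the right-hand side equals $\mathcal{E}^x(\phi(X_{t+s}))=v(t+s,x)$ by the compatibility relation $\mathcal{E}^x=\mathcal{E}^x_0$ on $\Omega_x$ noted after Proposition~\ref{prop:DynamicProgramming}, while the left-hand side equals $\mathcal{E}^x_0(v(t,X_s))=\mathcal{E}^x(v(t,X_s))$ by that same compatibility relation together with the first step. Chaining these equalities gives $v(t+s,x)=\mathcal{E}^x(v(t,X_s))$, which is the claim.

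The only genuinely delicate point is the bookkeeping of the restriction-to-$\Omega_x$ qualifiers: both the Markov property and the compatibility relation are valid on $\Omega_x$ rather than on all of $\Omega$, so in passing from $\mathcal{E}^x_0\bigl(\mathcal{E}^x_s(\phi(X_{t+s}))\bigr)$ to $\mathcal{E}^x_0\bigl(v(t,X_s)\bigr)$ one must justify replacing a function by one agreeing with it only on $\Omega_x$. This is legitimate because for $\omega\in\Omega_x$ one has $\mathcal{E}^x_0(g)(\omega)=\sup_{P\in\mathcal{P}_x(\Theta)}E_P[g]$ with every prior $P$ concentrated on $\Omega_x$ and concatenation at time $0$ acting trivially there, so that $\mathcal{E}^x_0(g)$ on $\Omega_x$ depends on $g$ only through its restriction $g|_{\Omega_x}$; I would isolate this as a one-line observation before running the chain of equalities. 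Everything else is a direct invocation of Proposition~\ref{prop:DynamicProgramming}, Corollary~\ref{lem:SubMarkov}, Theorem~\ref{thm:Hollender4.29}, and the boundedness of $\phi$, requiring no further estimates.
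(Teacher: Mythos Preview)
Your proposal is correct and follows essentially the same approach as the paper: use the Markov property \eqref{eq:markov} to identify $v(t,X_s)=\mathcal{E}^x_s(\phi(X_{t+s}))$ on $\Omega_x$, then apply the tower property \eqref{eq:aggregation_property} with $\sigma=0$, $\tau=s$ and the compatibility $\mathcal{E}^x=\mathcal{E}^x_0$ on $\Omega_x$. Your write-up is in fact more careful than the paper's (which simply cites \cite[Lemma 5.1]{neufeld_nonlinear_2016}) in spelling out the upper-semianalyticity of $x\mapsto v(t,x)$ and the $\Omega_x$-restriction bookkeeping.
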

	\begin{proof} The proof is essentially the same as in \cite[Lemma 5.1]{neufeld_nonlinear_2016}.
		Let $s,t\geq 0$ and $x\in\mathbb{R}^d$.
		By the definition of the value function and the Markov property from \eqref{eq:markov},
		\begin{align}
			v(t,X_s)=\mathcal{E}^{X_s}(\phi(X_t))=\mathcal{E}^x_s(\phi(X_{t+s})).
		\end{align}
		Applying $\mathcal{E}^x(\cdot)$ on both sides, the dynamic programming property in \eqref{eq:Aggregation} yields
		\begin{equation}
			\mathcal{E}^x(v(t,X_s))=\mathcal{E}^x(\mathcal{E}^x_s(\phi(X_{t+s}))=\mathcal{E}^x(\varphi(X_{t+s}))=v(t+s,x).
		\end{equation}
	\end{proof}
	
	\begin{lem}\label{lem:JointContinuity}
		Let $\Theta\subseteq(\mathbb{R}^d)^{d+1}\times\mathbb{S}^{d+1}\times\mathfrak{L}^{d+1}$ satisfy Assumption \ref{con:C} and $\phi\in\textnormal{Lip}_b(\mathbb{R}^d)$ with Lipschitz constant $L_\phi$.
		Let $v$ be the value function from \eqref{eq:def-value-function}.
		Then for fixed $x\in\mathbb{R}^d$, the map $t\mapsto v(t,x)$ is locally $\frac{1}{2}$-Hölder continuous.
		Moreover, if $\cup_{x \in K} \mathcal{P}_x(\Theta)$ is sequentially compact in itself (with respect to weak convergence of measures)\footnote{i.e. every sequence of measures in $\cup_{x \in K} \mathcal{P}_x(\Theta)$ has a convergent subsequence converging with respect to the weak convergence of measures to another measure in $\cup_{x \in K} \mathcal{P}_x(\Theta)$.} for compact $K \subset \mathbb{R}^d$, then for fixed $t \geq 0$, $x\mapsto v(t,x)$ is upper semicontinuous.  
	\end{lem}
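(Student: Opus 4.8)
The plan is to prove the two assertions separately: the time regularity by bootstrapping Lemma~\ref{lem:ValueF_ts} with the increment estimate of Corollary~\ref{cor:TimeCont0}, and the spatial upper semicontinuity by a weak-compactness argument on near-optimal priors.

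\emph{Time regularity.} Fix $x\in\mathbb R^d$ and $s,t\ge 0$. Applying Lemma~\ref{lem:ValueF_ts} with the roles of $s$ and $t$ interchanged gives $v(t+s,x)=\mathcal E^x(v(s,X_t))$, while $v(t,x)=\mathcal E^x(\phi(X_t))$ by definition; since $\mathcal E^x$ is sublinear and the integrands are bounded, $|v(t+s,x)-v(t,x)|\le\mathcal E^x(|v(s,X_t)-\phi(X_t)|)$. The main step is a pointwise bound on $|v(s,y)-\phi(y)|=|\mathcal E^y(\phi(X_s))-\mathcal E^y(\phi(X_0))|$: sublinearity, the Lipschitz property of $\phi$ and Corollary~\ref{cor:TimeCont0} give $|v(s,y)-\phi(y)|\le L_\phi\,C_{\mathcal K,1}\,(1+\|y\|)(s+\sqrt s)$ for $0\le s\le\epsilon(1)$. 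Substituting $y=X_t$ and applying $\mathcal E^x$ yields $|v(t+s,x)-v(t,x)|\le L_\phi\,C_{\mathcal K,1}\,(s+\sqrt s)\,(1+\mathcal E^x(\|X_t\|))$, which is finite by Lemma~\ref{lem:integrability}; since iterating the estimate in the proof of Lemma~\ref{lem:integrability} shows $t\mapsto\mathcal E^x(\|X_t\|)$ is bounded on compacts, this produces a local bound $|v(t_2,x)-v(t_1,x)|\le C_{T,x}\sqrt{|t_2-t_1|}$ for $t_1,t_2\in[0,T]$, i.e.\ local $\tfrac12$-Hölder continuity.

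\emph{Spatial upper semicontinuity.} Fix $t\ge0$ and $x_n\to x$, and put $K:=\{x\}\cup\{x_n:n\in\mathbb N\}$, a compact set. Passing to a subsequence I may assume $v(t,x_n)\to\ell:=\limsup_nv(t,x_n)$, which is finite since $|v|\le\|\phi\|_\infty$. Fix $\delta>0$; using $\mathcal P_{x_n}(\Theta)\neq\emptyset$ (Assumption~\ref{con:C}) choose $P_n\in\mathcal P_{x_n}(\Theta)$ with $E_{P_n}[\phi(X_t)]\ge v(t,x_n)-\delta$. By the assumed sequential compactness of $\bigcup_{y\in K}\mathcal P_y(\Theta)$, along a subsequence $P_{n_k}$ converges weakly to some $P\in\bigcup_{y\in K}\mathcal P_y(\Theta)$; since $\omega\mapsto\omega(0)$ is Skorokhod-continuous, the laws $P_{n_k}\circ X_0^{-1}=\delta_{x_{n_k}}$ converge to $P\circ X_0^{-1}$, which forces $P(X_0=x)=1$ and hence $P\in\mathcal P_x(\Theta)$. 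Once $\limsup_kE_{P_{n_k}}[\phi(X_t)]\le E_P[\phi(X_t)]$ is shown, it follows that $\ell\le E_P[\phi(X_t)]+\delta\le v(t,x)+\delta$, and letting $\delta\downarrow0$ finishes the proof.

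The hard part is this last inequality, because the evaluation $\omega\mapsto\phi(\omega(t))$ is \emph{not} Skorokhod-continuous. I plan to regularise in time: for $\eta>0$ the functional $F_\eta(\omega):=\tfrac1\eta\int_t^{t+\eta}\phi(\omega(s))\,ds$ is bounded and Skorokhod-continuous (almost-everywhere convergence of Skorokhod-convergent paths plus dominated convergence), so $E_{P_{n_k}}[F_\eta]\to E_P[F_\eta]$ as $k\to\infty$ for each fixed $\eta$, and $E_P[F_\eta]\to E_P[\phi(X_t)]$ as $\eta\downarrow0$ by right-continuity of paths and dominated convergence. The replacement error is estimated by $|E_{P_{n_k}}[\phi(X_t)]-E_{P_{n_k}}[F_\eta]|\le L_\phi\,\tfrac1\eta\int_t^{t+\eta}E_{P_{n_k}}[\|X_s-X_t\|]\,ds$, and for $s-t\le\epsilon(1)$ the Markov property (Corollary~\ref{lem:SubMarkov}) together with Corollary~\ref{cor:TimeCont0} gives $E_{P_{n_k}}[\|X_s-X_t\|\mid\mathcal F_t]\le\mathcal E^{X_t}(\|X_{s-t}-X_0\|)\le C_{\mathcal K,1}(1+\|X_t\|)\big((s-t)+\sqrt{s-t}\big)$, whence $E_{P_{n_k}}[\|X_s-X_t\|]\le C_{\mathcal K,1}\big(1+\sup_{y\in K}\mathcal E^y(\|X_t\|)\big)\big((s-t)+\sqrt{s-t}\big)$, a bound uniform in $k$ and vanishing as $\eta\downarrow0$ (the supremum over $K$ is finite because the estimates in the proof of Lemma~\ref{lem:integrability} are uniform for the initial value in a compact set). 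Sending first $k\to\infty$ and then $\eta\downarrow0$ yields $\limsup_kE_{P_{n_k}}[\phi(X_t)]\le E_P[\phi(X_t)]$, completing the argument.
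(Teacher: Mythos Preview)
Your time-regularity argument is essentially the paper's: both use the semigroup relation (Lemma~\ref{lem:ValueF_ts}), sublinearity of $\mathcal{E}^x$, the Lipschitz bound on $\phi$, Corollary~\ref{cor:TimeCont0} for the increment $\|X_s-X_0\|$, and Lemma~\ref{lem:integrability} to control $\mathcal{E}^x(\|X_t\|)$ locally in $t$. The paper writes out the left- and right-increments separately and then combines them via the recursion \eqref{ineq:Cont_t-eps}; you collapse this into one estimate by conditioning at the fixed time $t$ instead of at $s$, but the content is identical.

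For the spatial upper semicontinuity the two proofs differ. The paper simply invokes \cite[Lemma~4.42]{hollender_levy-type_2016}, which packages exactly the sequential-compactness hypothesis into a ready-made upper-semicontinuity statement for value functions of this type. You instead unpack the argument: pick near-optimal priors $P_n\in\mathcal{P}_{x_n}(\Theta)$, pass to a weak limit $P$ inside $\bigcup_{y\in K}\mathcal{P}_y(\Theta)$, identify $P\in\mathcal{P}_x(\Theta)$ via continuity of $\omega\mapsto\omega(0)$, and then handle the discontinuity of $\omega\mapsto\phi(\omega(t))$ by the time-average regularisation $F_\eta$, controlling the replacement error uniformly in $k$ through the Markov-type bound $E_{P_{n_k}}[\|X_s-X_t\|\mid\mathcal{F}_t]\leq\mathcal{E}^{X_t}(\|X_{s-t}-X_0\|)$ (which is exactly the invariance part of Condition~\ref{con:A} combined with Corollary~\ref{cor:TimeCont0}). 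This is correct and self-contained; the trade-off is that you are effectively reproving a special case of Hollender's lemma, whereas the paper takes the black-box route. Your argument has the advantage of making transparent \emph{where} the sequential compactness and the uniform moment bounds enter.
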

	\begin{proof} 
		For the local $\frac{1}{2}$-Hölder right-continuity, fix an $s\geq 0$ and let $t\geq0$. By the Markov property in \eqref{eq:markov}, we have for all $x\in\mathbb{R}^d$ that
		\begin{align}\label{eq:above}
			|v(s+t,x)-v(s,x)|
			&=\left|\mathcal{E}^x(\phi(X_{s+t}))-\mathcal{E}^x(\phi(X_s))\right|\nonumber\\
			&\leq\mathcal{E}^x\Big(|\phi(X_{s+t})-\phi(X_s)|\Big)\nonumber\\
			 & {=\mathcal{E}^x\Big(\mathcal{E}^{X_s}\Big(|\phi(X_t)-\phi(X_0)|\Big)\Big)} \nonumber\\
			&\leq L_\phi\,\mathcal{E}^x\Big(\mathcal{E}^{X_s}\Big(\|X_t-X_0\|\Big)\Big)\nonumber\\
			&\leq L_\phi\,\mathcal{E}^x\left(\mathcal{E}^{X_s}\left(\sup_{0\leq u\leq t}\|X_u-X_0\|\right)\right).
		\end{align}
		Let $\epsilon:=\epsilon(1)>0$ be the constant from Corollary \ref{cor:TimeCont0}.
		Then \eqref{eq:above} yields for all $0\leq t\leq\epsilon$,
		\begin{align}\label{ineq:HölderR}
			|v(s+t,x)-v(s,x)|&\leq L_\phi\,C_{\mathcal{K},1}\,(t+t^{\frac{1}{2}})\,\left(\mathcal{E}^x\left(\|X_s\|\right)+1\right),
		\end{align}
		and $\mathcal{E}^x(\|X_s\|)<\infty$ by Lemma \ref{lem:integrability}.
		Analogously, for the local $\frac{1}{2}$-Hölder left-continuity, we have that
		\begin{align}\label{ineq:HölderL}
			|v(s-t,x)-v(s,x)|&\leq L_\phi\,C_{\mathcal{K},1}\,(t+t^{\frac{1}{2}})\,\left(\mathcal{E}^x\left(\|X_{s-t}\|\right)+1\right)
		\end{align}
		for all $0\leq t\leq\epsilon$.
		Combining both \eqref{ineq:HölderR} and \eqref{ineq:HölderL} yields for all $-\epsilon\leq t\leq\epsilon$,
		\begin{align}
			|v(s+t,x)-v(s,x)|
			&\leq |v(s+t,x)-v(s,x)|+ |v(s-t,x)-v(s,x)| \nonumber \\
			& \leq L_\phi\,C_{\mathcal{K},1}\,(|t|+|t|^{\frac{1}{2}})\,\left(\mathcal{E}^x\left(\|X_s\|\right)+\mathcal{E}^x\left(\|X_{s-t}\|\right)+2 \right)\nonumber\\
			& \leq L_\phi\,C_{\mathcal{K},1}\,(|t|+|t|^{\frac{1}{2}})\,\Big[\left( C_{\mathcal{K},1}\,(\epsilon+\epsilon^{\frac{1}{2}})+1\right)\,(\mathcal{E}^x(\|X_{s-\epsilon}\|)+1) \nonumber \\
			&\qquad\qquad\qquad\qquad +\left( C_{\mathcal{K},1}\,((\epsilon-t)+(\epsilon-t)^{\frac{1}{2}})+1\right)\,(\mathcal{E}^x(\|X_{s-\epsilon}\|)+1)+2 \Big] \label{eq:Corrections_Francesca_2}\\
			&\leq L_\phi\,C_{\mathcal{K},1}\,(|t|+|t|^{\frac{1}{2}})\,\underbrace{(2\,C_{\mathcal{K},1}\,(\epsilon+\epsilon^{\frac{1}{2}})+1)\,(\mathcal{E}^x(\|X_{s-\epsilon}\|)+2)}_{=:H_{s,x}},
		\end{align}
		where we apply twice \eqref{ineq:Cont_t-eps} in \eqref{eq:Corrections_Francesca_2}.

The upper semicontinuity of $v$ follows directly by the assumptions on $\cup_{x \in K} \mathcal{P}_x(\Theta)$ and by Lemma 4.42 in \cite{hollender_levy-type_2016}. 		
	\end{proof}
	
\begin{rem}
    \begin{enumerate}
        \item Note that the value function $v$ in \eqref{eq:def-value-function} is not any longer continuous, as it is for example the case in \cite{neufeld_nonlinear_2016}, where the sublinear operator and the value function to the set $ \check{\mathcal{P}}_0(\Theta)$ in \eqref{eq:parameter-uncertainty} is considered. However, in our setting the equation in \eqref{eq:ContinuityFalse} is not any longer correct, which would allow to conclude the continuity of $v$. For $t \geq 0$ and $x,y \in \mathbb{R}^d$ it is not true that  
        \begin{align}
			\left|\mathcal{E}^x(\phi(X_t))-\mathcal{E}^y(\phi(X_t))\right|=|\mathcal{E}^x(\phi(X_t))-\mathcal{E}^x(\phi(X_t-x+y))|. \label{eq:ContinuityFalse}
		\end{align}
		\item The continuity of the value function $v$ has also been discussed in \cite{hollender_levy-type_2016}. In particular, in Lemma 4.42 in \cite{hollender_levy-type_2016} provides a criteria which guarantees the upper semicontinuity of $v$ and which we use in Lemma \ref{lem:JointContinuity}. However, as pointed out in \cite{hollender_levy-type_2016} such a criteria does not exist for the lower semicontinuity of $v$.
    \end{enumerate}
\end{rem}	
	\begin{prop}\label{prop:Existence}
		Let $\Theta\subseteq(\mathbb{R}^d)^{d+1}\times\mathbb{S}^{d+1}\times\mathfrak{L}^{d+1}$ satisfy Assumption \ref{con:C} and $\phi\in\textnormal{Lip}_b(\mathbb{R}^d)$.
		If $\cup_{x \in K} \mathcal{P}_x(\Theta)$ is sequentially compact in itself (with respect to weak convergence of measures) for compact $K \subset \mathbb{R}^d$, then the value function $v$ from \eqref{eq:def-value-function} is a viscosity subsolution of \eqref{eq:PIDE} in $\mathbb{R}_+\times\textnormal{S}$, and satisfies the initial condition \eqref{eq:PIDE-initial} on $\mathbb{R}^d$. If in addition the value function $v$ is also lower semicontinuous, then $v$ is a viscosity solution of \eqref{eq:PIDE} in $\mathbb{R}_+\times\textnormal{S}$, and satisfies the initial condition \eqref{eq:PIDE-initial} on $\mathbb{R}^d$. 
	\end{prop}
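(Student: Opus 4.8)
The plan is to adapt the ``dynamic programming $\Rightarrow$ viscosity solution'' argument of \cite{neufeld_nonlinear_2016} and \cite{fadina_affine_2019} to the present affine, jump, and general-state-space framework. Three ingredients enter: the dynamic programming identity $v(t+s,x)=\mathcal{E}^x(v(t,X_s))$ of Lemma~\ref{lem:ValueF_ts}; Dynkin's formula combined with the moment estimates of Lemma~\ref{lem:TimeCont0}, Corollary~\ref{cor:TimeCont0} and Lemma~\ref{lem:integrability}; and a regularity statement for the nonlinear generator evaluated on a fixed test function. The initial condition is immediate: since $X_0=x$ $P$-a.s.\ for every $P\in\mathcal{P}_x(\Theta)$, one has $v(0,x)=\mathcal{E}^x(\phi(X_0))=\phi(x)$. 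The upper semicontinuity of $v$, which is part of the very notion of a viscosity subsolution, is exactly what Lemma~\ref{lem:JointContinuity} supplies under the standing sequential-compactness hypothesis.

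First I would establish the technical backbone. Fix $\psi\in\textnormal{C}^{2,3}_b(\mathbb{R}_+\times\mathbb{R}^d)$ and, for $\theta=(\beta,\alpha,\nu)\in\Theta$, set
\begin{equation*}
	F_\theta(r,y):=D_x\psi(r,y)^T\beta(y)+\tfrac12\,\tr\big[D_x^2\psi(r,y)\,\alpha(y)\big]+\int_{\mathbb{R}^d}\big[\psi(r,y+z)-\psi(r,y)-D_x\psi(r,y)^T h(z)\big]\,\nu(y;dz),
\end{equation*}
so that $\mathcal{A}_y\psi(r,\cdot)(y)=\sup_{\theta\in\Theta}F_\theta(r,y)$. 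I claim $\{F_\theta\}_{\theta\in\Theta}$ is equi-Lipschitz on compact subsets of $\mathbb{R}_+\times\textnormal{S}^o$, with local constants depending only on $\mathcal{K}$ and $\psi$: Assumption~\ref{con:C} and Corollary~\ref{cor:lin_bound} give a uniform linear bound and a uniform Lipschitz constant for $y\mapsto(\beta(y),\alpha(y))$ on $\textnormal{S}$ and for $y\mapsto\nu(y)$ with respect to the norm \eqref{def:extended_norm_Levy}, while $\psi\in\textnormal{C}^2_b$ makes the integrand above and its $(r,y)$-increments dominated by $C_\psi(\|z\|^2\wedge\|z\|)$, which is precisely the weight in \eqref{def:extended_norm_Levy}. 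Hence $(r,y)\mapsto\mathcal{A}_y\psi(r,\cdot)(y)$, and with it $g(r,y):=-\partial_t\psi(r,y)+\mathcal{A}_y\psi(r,\cdot)(y)$, is locally Lipschitz on $\mathbb{R}_+\times\textnormal{S}^o$ and grows at most linearly in $y$.

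For the subsolution property, fix an interior point $(t_0,x_0)\in(\mathbb{R}_+\times\textnormal{S})^o$ (so $t_0>0$ and $x_0\in\textnormal{S}^o$) and $\psi\in\textnormal{C}^{2,3}_b$ with $\psi(t_0,x_0)=v(t_0,x_0)$ and $v\le\psi$. By Lemma~\ref{lem:ValueF_ts}, $\psi(t_0,x_0)=v(t_0,x_0)=\mathcal{E}^{x_0}\big(v(t_0-s,X_s)\big)\le\mathcal{E}^{x_0}\big(\psi(t_0-s,X_s)\big)$ for $0\le s<t_0$. For each $P\in\mathcal{P}_{x_0}(\Theta)$, Itô's formula applied to $u\mapsto\psi(t_0-u,X_u)$, the inclusion $(b^P_u,a^P_u,k^P_u)\in\Theta(X_u)$ (which makes the drift of $\psi(t_0-u,X_u)$ at most $g(t_0-u,X_u)$), and the true-martingale property of the stochastic integrals (from $\|D\psi\|_\infty<\infty$, the domination $C_\psi(\|z\|^2\wedge\|z\|)$ of the jump integrand, and $\mathcal{E}^{x_0}(\|X_u\|)<\infty$ by Lemma~\ref{lem:integrability}) give
\begin{equation*}
	E_P\big[\psi(t_0-s,X_s)\big]\le\psi(t_0,x_0)+E_P\Big[\int_0^s g(t_0-u,X_u)\,du\Big].
\end{equation*}
Taking the supremum over $P$ and comparing with the previous inequality yields $0\le\sup_{P\in\mathcal{P}_{x_0}(\Theta)}E_P\big[\int_0^sg(t_0-u,X_u)\,du\big]$. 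Subtracting $s\,g(t_0,x_0)$ and bounding the remainder by the local modulus of continuity of $g$ on a small ball around $x_0$ together with its linear growth off that ball — both controlled uniformly in $P$ through Corollary~\ref{cor:TimeCont0} with $p=1$ and $p=2$ — shows this remainder is $s\cdot o(1)$ as $s\downarrow0$. Dividing by $s$ and letting $s\downarrow0$ gives $g(t_0,x_0)\ge0$, i.e.\ $\partial_t\psi(t_0,x_0)-\mathcal{A}_{x_0}\psi(t_0,\cdot)(x_0)\le0$, the required inequality.

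For the supersolution property (now assuming $v$ lower semicontinuous) the scheme is reversed: with $\psi$ touching $v$ from below one gets $\psi(t_0,x_0)=\mathcal{E}^{x_0}(v(t_0-s,X_s))\ge\mathcal{E}^{x_0}(\psi(t_0-s,X_s))$, so to obtain the reverse inequality one needs a \emph{near-optimal} prior. Given $\epsilon>0$, choose $\theta^*\in\Theta$ with $F_{\theta^*}(t_0,x_0)\ge\mathcal{A}_{x_0}\psi(t_0,\cdot)(x_0)-\epsilon$; the crucial step is to realise $\theta^*$ as an actual semimartingale law $P_\epsilon\in\mathcal{P}_{x_0}(\Theta)$ with differential characteristics $\theta^*(X)$, i.e.\ the law of the classical affine process with parameter $\theta^*$ started at $x_0$ in the sense of Remark~\ref{rem:non-empty}, noting that $\theta^*(y)\in\Theta(y)$ for all $y$ forces any such law into $\mathcal{P}_{x_0}(\Theta)$ automatically. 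For $P_\epsilon$, Dynkin's formula reads $E_{P_\epsilon}[\psi(t_0-s,X_s)]=\psi(t_0,x_0)+E_{P_\epsilon}[\int_0^s(-\partial_t\psi(t_0-u,X_u)+F_{\theta^*}(t_0-u,X_u))\,du]$; replacing the integrand by $-\partial_t\psi(t_0,x_0)+F_{\theta^*}(t_0,x_0)\ge g(t_0,x_0)-\epsilon$ up to $o(1)$ (equi-Lipschitz control of $F_{\theta^*}$ and $\partial_t\psi$, Corollary~\ref{cor:TimeCont0} for $P_\epsilon$) and combining with $\psi(t_0,x_0)\ge E_{P_\epsilon}[\psi(t_0-s,X_s)]$ gives, after dividing by $s$, $0\ge g(t_0,x_0)-\epsilon-o(1)$; letting $s\downarrow0$ and then $\epsilon\downarrow0$ yields $g(t_0,x_0)\le0$. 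Together with the subsolution part and the initial condition this proves the proposition. I expect this last step to be the main obstacle: unlike the nonlinear Lévy case of \cite{neufeld_nonlinear_2016}, where a maximizing triplet is itself an admissible constant characteristic and the corresponding Lévy law is trivially a prior, here the near-maximizing affine parameter $\theta^*$ must be shown to correspond to a genuine element of $\mathcal{P}_{x_0}(\Theta)$ — an existence/admissibility question for classical affine processes, and precisely the place where structural assumptions on $\Theta$ and $\textnormal{S}$ (in the spirit of Remark~\ref{rem:non-empty}) must be invoked. The uniform-in-$\theta$ estimate of the nonlocal term is a secondary, purely technical difficulty, and is what forces the weighted norm \eqref{def:extended_norm_Levy} and Assumption~\ref{con:C} into the argument.
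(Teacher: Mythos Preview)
Your subsolution argument is the same DPP--plus--Itô strategy as the paper's, organised differently. The paper freezes $\partial_t\psi$, $D_x\psi$, $D_x^2\psi$ at the base point $(t,x)$ and bounds each remainder by $O(s^{3/2})$ via Lemma~\ref{lem:TimeCont0}; for the jump term it splits into $\|z\|<\delta$ and $\|z\|\ge\delta$, controlling the small-jump piece by $\mathcal{K}_\delta(x)$ from Assumption~\ref{con:C} and sending $\delta\downarrow0$ only at the very end. What survives is $E_P\big[\int_0^s\mathcal{A}_{X_u}\psi(t,x)\,du\big]$ (note: generator evaluated at $X_u$, test function frozen at $(t,x)$), and a separate estimate $\big|\mathcal{A}_{X_u}\psi(t,x)-\mathcal{A}_x\psi(t,x)\big|\le\bar C\,\|X_u-x\|$ on $\{X_u\in\textnormal{S}\}$, combined with the interior condition $x\in\textnormal{S}^o$ to handle $\{X_u\notin\textnormal{S}\}$, gives the limit. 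Your equi-Lipschitz packaging of $F_\theta$ at the running point $(t_0-u,X_u)$ merges these two stages into one and avoids the $\delta$-split; the content is equivalent and somewhat cleaner.

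For the supersolution the paper does \emph{not} construct a near-optimal prior; it simply writes that ``the supersolution property follows analogously'' and gives no further detail. Your explicit approach --- pick $\theta^*\in\Theta$ near-maximising and realise it as the law of a classical affine process in $\mathcal{P}_{x_0}(\Theta)$ --- is therefore a genuine departure. The obstacle you flag is real: the supremum defining $\mathcal{A}_{x_0}$ ranges over all of $\Theta$, including $\theta$ with $\theta(x_0)\notin\mathbb{R}^d\times\mathbb{S}_+\times\mathfrak{L}_+$, and no semimartingale characteristic can attain such a triple, so the reversal of the step $(b^P,a^P,k^P)\mapsto\sup_{\theta\in\Theta}$ in \eqref{ineq:combined} is not automatic. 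The paper does not resolve this and in fact raises precisely this non-ellipticity issue at the start of Section~\ref{sec:Uniqueness}. So your supersolution argument is more explicit than the paper's, but leans on an existence hypothesis (in the spirit of Remark~\ref{rem:non-empty}) that the statement of Proposition~\ref{prop:Existence} does not make; matching the paper would mean asserting ``analogously'' as it does, while a watertight argument would require either your admissibility route or a redefinition of $\mathcal{A}_x$ with the supremum restricted to $\theta$ with $\theta(x)\in\mathbb{R}^d\times\mathbb{S}_+\times\mathfrak{L}_+$.
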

	
	\begin{proof}
		We largely follow the line of arguments presented in the proofs of \cite[Theorem 1]{fadina_affine_2019}, and \cite[Proposition 5.4]{neufeld_nonlinear_2016} for the jump part. The initial condition \eqref{eq:PIDE-initial} follows from the definition of the value function in \eqref{eq:def-value-function}.
		Further, the condition for the upper semicontinuity of the value function $v$ follows directly by Lemma \ref{lem:JointContinuity}.
		We prove that $v$ is a viscosity subsolution of the PIDE \eqref{eq:PIDE}.
		In case $v$ is also lower semicontinuous, the supersolution property follows analogously.
		
		Fix an interior point $(t,x)\in\mathbb{R}_+\times \textnormal{S}$, and let  $\psi\in\textnormal{C}^{2,3}_b(\mathbb{R}_+\times\mathbb{R}^d)$ with $\psi(t,x)=v(t,x)$ and $\psi\geq v$ on $\mathbb{R}_+\times\mathbb{R}^d$.
		By Lemma \ref{lem:ValueF_ts}, for all $0\leq s\leq t$, we have 
		\begin{equation}\label{ineq:0}
			0=\mathcal{E}^x\left(v(t-s,X_s)-v(t,x)\right)\leq \mathcal{E}^x\left(\psi(t-s,X_s)-\psi(t,x)\right).
		\end{equation}

		In the following, we study the sublinear expectation on the right-hand side by considering the associated linear expectations.
		Therefore, fix a $P\in\mathcal{P}_x(\Theta)$, and let $(b^P,a^P,k^P)$ denote the differential characteristics of $X$ under $P$.
		For $0\leq s\leq t$,
		It\^{o}'s formula yields $P$-a.s. that
		\begin{flalign}
			\psi(&t-s,X_s)-\psi(t,x)\nonumber\\
			&=\int_0^s-\partial_t\psi(t-u,X_{u-})\,du+\int_0^s D_x\psi(t-u,X_{u-})^T\,b^P_u\,du\nonumber\\
			&\quad+\int_0^s D_x\psi(t-u,X_{u-})^T\,dM_u^{P}+\frac{1}{2}\int_0^s \tr\Big[D_{x}^2\psi(t-u,X_{u-})\,a^P_u\Big]\,du\label{eq:ItoVisc_1}\\
			&\quad+\int_0^s\int_{\mathbb{R}^d}\Big[\psi(t-u,X_{u-}+z)-\psi(t-u,X_{u-})
			-D_x\psi(t-u,X_{u-})^T\,h(z)\Big]\,\mu(du,dz), \label{eq:ItoVisc}
		\end{flalign}
		where $\mu$ is the jump measure, and $M^P=\,^c\!M^{P}+\,^j\!M^{P}$ is the local martingale part in the canonical decomposition of $X$ under $P$, cf. \eqref{eq:CanonicalDecomposition}.
		
		Since $\psi\in\textnormal{C}^{2,3}_b(\mathbb{R}_+\times\mathbb{R}^d)$, the gradient $D_x\psi$ is bounded.
		Thus, the integral with respect to $M^{P}$ is a local martingale starting at zero, cf. \cite[Corollary I.4.55]{jacod_limit_2003}.
		Let $\epsilon:=\epsilon(1)$ be the constant from Lemma \ref{lem:TimeCont0}. Then we obtain for all $0\leq s\leq \epsilon$,
		\begin{flalign}\label{ineq:3}
			E_P&\Bigg[\left[\int_{0}^{\cdot}D_x\psi(t-u,X_{u-})^T\,dM^P_u\right]_s\Bigg]\nonumber\\
			&=E_P\Bigg[\int_{0}^{s}D_x\psi(t-u,X_{u-})^T\,d[M^P]_u\,D_x\psi(t-u,X_{u-})\Bigg]\nonumber\\
			&=
			E_P\Bigg[\int_{0}^{s}D_x\psi(t-u,X_{u-})^T\,a_u^P\;D_x\psi(t-u,X_{u-})\,du\Bigg]\nonumber\\
			&\quad
			+E_P\Bigg[\int_{0}^{s}D_x\psi(t-u,X_{u-})^T\,\int_{\mathbb{R}^d}h(z)\,h(z)^T\,d\mu(du,dz)\,D_x\psi(t-u,X_{u-})\Bigg]\nonumber\\
			&\leq
			\|D_x\psi\|_\infty^2\,\left(E_P\left[\int_0^s\left\|a_u^P\right\|\,du\right]+ E_P\left[\int_0^s\int_{\mathbb{R}^d}\|h(z)\|^2\,\mu(du,dz)\right]\right)<\infty,
		\end{flalign}
		due to the inequalities \eqref{ineq:cont0_cMart} and \eqref{ineq:cont0_dMart}.
		Hence, the integral with respect to $M^P$ in \eqref{eq:ItoVisc_1} is a true martingale on $[0,\epsilon]$, cf. \cite[Corollary II.6.3]{protter_stochastic_2004}, and 
	    \begin{equation}
	        E_P \left[ \int_0^s D_x\psi(t-u,X_{u-})^T\,dM_u^{P} \right]=0.
	    \end{equation}
		To estimate the expectation of the other terms,
		note that $\psi$, $\partial_t\psi$, $D_x \psi$ and $D^2_{x}\psi$ are Lipschitz continuous in $t,x$ since $\psi\in\textnormal{C}_b^{2,3}(\mathbb{R}_+\times\mathbb{R}^d)$.
 		From now on we use the notation $L:=\max(L,L_t,L_x,L_{xx})<\infty$. 
		Observe that for the first term on the right-hand side of \eqref{eq:ItoVisc},
		\begin{equation}\label{eq:time}
			\int_0^s-\partial_t\psi(t-u,X_{u-})\,du=\int_0^s\Big(\partial_t\psi(t,x)-\partial_t\psi(t-u,X_{u-}) \Big)\,du-\int_0^s\partial_t\psi(t,x)\,du.
		\end{equation}
		For sufficiently small $s$, Lemma \ref{lem:TimeCont0} gives the estimation
		\begin{flalign}\label{ineq:1}
			 \Bigg \vert E_P\left[\int_0^s\Big(\partial_t\psi(t,x)-\partial_t\psi(t-u,X_{u-})\Big)\,du\right] \Bigg \vert 	
			&\leq L\, E_P\left[\int_0^s\left( u+\|X_{u-}-x\| \right)\,du\right]\nonumber\\ 
			&\leq L\,s\,E_P\left[\frac{s}{2}+\sup_{0\leq u\leq s}\|X_u-x\|\right]\nonumber\\ 
			&\leq L\,s\,\left(\frac{s}{2}+C_{\mathcal{K},1}(\|x\|+1)(s+s^{\frac{1}{2}})\right) \nonumber\\
			&{=} L\,\left(\frac{1}{2}+C_{\mathcal{K},1}\,(\|x\|+1)\right)(s^{\frac{1}{2}}+1)\,s^{\frac{3}{2}}.
		\end{flalign}
		Similarly for the second term on the right-hand side of \eqref{eq:ItoVisc}, we have
		\begin{align}
			\int_0^s D_x\psi(t-u,X_{u-})^T\,b^P_u\,du&=\int_0^s\Big( D_x\psi(t-u,X_{u-})-D_x\psi(t,x)\Big)^T\,b^P_u\,du\nonumber\\
			&\quad+\int_0^sD_x\psi(t,x)^T\,b^P_u\,du.
		\end{align}
		Applying Lemma \ref{lem:TimeCont0},  this yields for sufficiently small $0\leq s$ that
		\begin{align}\label{ineq:2}
			\Bigg \vert E_P&\Bigg[\int_0^s\Big( D_x\psi(t-u,X_{u-})-D_x\psi(t,x)\Big)^T\,b^P_u\,du\Bigg]\Bigg \vert  \nonumber\\
			&\leq L\, E_P\left[\int_0^s\left(u+\|X_{u-}-x\|\right)\,\|b^P_u\|\,du\right]\nonumber\\
			&\leq L\,\mathcal{K}\,E_P\left[\int_0^s \left(u+\|X_{u-}-x\|\right)\,(\|X_u\|+1)\,du\right] \nonumber\\
			&\leq L\,\mathcal{K}\,s\, E_P\left[\left(\frac{s}{2}+\sup_{0\leq u\leq s}\|X_{u}-x\|\right)\,\left(\sup_{0\leq u\leq s}\|X_u-x\|+\|x\|+1\right)\right]\nonumber\\
			&= L\,\mathcal{K}\,s \Bigg(\left(\frac{s}{2}+\|x\|+1\right)\,E_P\left[\sup_{0\leq u\leq s}\|X_{u}-x\|\right]+\frac{s}{2}\,(\|x\|+1)+E_P\left[\sup_{0\leq u\leq s}\|X_u-x\|^2\right] \Bigg)\nonumber\\
			&\leq L\,\mathcal{K}\,s\Bigg(\left(\frac{s}{2}+\|x\|+1\right)\,C_{\mathcal{K},1}\,(s+s^{\frac{1}{2}})\,(\|x\|+1)+\frac{s}{2}\,(\|x\|+1)+C_{\mathcal{K},2}\,(s^2+s)\,(\|x\|+1)^2\Bigg)\nonumber\\
			&\leq 
			L\,\mathcal{K}\,(\|x\|+1)^2 \,\left(C_{\mathcal{K},1}+1+C_{\mathcal{K},2}\right)\,(s^{\frac{3}{2}}+s+s^{\frac{1}{2}}+1)\,s^{\frac{3}{2}}.
		\end{align}
		For the forth term on the right-hand side of \eqref{eq:ItoVisc},
		\begin{align}
			\int_0^s \tr\Big[D_{x}^2\psi(t-u,X_{u-})\,a^P_u\Big]\,du&=\int_0^s \tr\Big[\Big(D_{x}^2\psi(t-u,X_{u-})-D_{x}^2\psi(t,x)\Big)\,a^P_u\Big]\,du\nonumber\\
			&\quad + \int_0^s \tr\Big[D_{x}^2\psi(t,x)\,a^P_u\Big]\,du.
		\end{align}
		Since the trace is an inner product on $\mathbb{S}$, we get by the Cauchy-Schwarz inequality, we obtain that
		\begin{flalign}\label{ineq:4}
			E_P&\left[\int_0^s \tr\left[\Big(D_{x}^2\psi(t-u,X_{u-})-D_{x}^2\psi(t,x)\Big)\,a^P_u\right]\,du\right]\nonumber\\
			&\leq E_P\left[\int_0^s \sqrt{ \tr\left[\Big(D_{x}^2\psi(t-u,X_{u-})-D_{x}^2\psi(t,x)\Big)^2\right] \tr \left[\left(a_u^P\right)^2\right]}\,du\right] \nonumber\\ 
			&\leq E_P\left[\int_0^s \sqrt{d^2\, \left\|D_{x}^2\psi(t-u,X_{u-})-D_{x}^2\psi(t,x)\right\|^2\,\left\|a_u^P\right\| ^2}\,du\right] \nonumber\\ 
			& = E_P\left[\int_0^s d \left\|D_{x}^2\psi(t-u,X_{u-})-D_{x}^2\psi(t,x)\right\|\,\left\|a_u^P\right\| \,du\right] \nonumber\\
			&\leq {d\,L\, E_P\left[\int_0^s \left(u+\|X_{u-}-x\|\right)\,\left\|a_u^P\right\|\,du\right]} \nonumber\\ 
			&\leq d\,L\,\mathcal{K}\,(\|x\|+1)^2 \,\left(C_{\mathcal{K},1}+1+C_{\mathcal{K},2}\right)\,(s^{\frac{3}{2}}+s+s^{\frac{1}{2}}+1)\,s^{\frac{3}{2}},
		\end{flalign}
		where the last steps follows as in \eqref{ineq:2}.
		
		For the last term in \eqref{eq:ItoVisc}, i.e., the integral with respect to the jump measure $\mu$, we consider large and small jumps separately.
		Therefore, take some $0<\delta\leq1$ such that $h(z)=z$ for all $z$ with $\|z\|\leq\delta$.
		By a Taylor expansion and the Mean-Value Theorem, there exists for each $z$ a $\zeta_z\in\mathbb{R}^d$ such that $P$-a.s.,
		\begin{align}\label{eq:jumps}
			\int_0^s&\int_{\mathbb{R}^d}\Big[\psi(t-u,X_{u-}+z)-\psi(t-u,X_{u-})-D_x\psi(t-u,X_{u-})^T\,h(z)\Big]\,\mu(du,dz)\nonumber\\
			&=\int_0^s\int_{\|z\|\geq \delta}\Big[\psi(t-u,X_{u-}+z)-\psi(t-u,X_{u-})-D_x\psi(t-u,X_{u-})^T\,h(z)\Big]\,\mu(du,dz)\nonumber\\
			&\quad+\int_0^s\int_{\|z\|< \delta}\left[\psi(t,x+z)-\psi(t,x)-D_{x}\psi(t,x)^T\,z\right]\,\mu(du,dz)\nonumber\\
			&=\int_0^s\int_{\|z\|\geq \delta}\Big[\psi(t-u,X_{u-}+z)-\psi(t-u,X_{u-})-D_x\psi(t-u,X_{u-})^T\,h(z)\Big]\,\mu(du,dz)\nonumber\\
			&\quad+\frac{1}{2}\int_0^s\int_{\|z\|< \delta}\tr\Big[D_{x}^2\psi(t-u,X_{u-}+\zeta_z)\,z\,z^T\Big]\,\mu(du,dz).
		\end{align}
		Both terms are $P$-integrable due to Assumption \ref{con:C}.
		Moreover for the large jumps, 
		\begin{align}
			\int_0^s&\int_{\|z\|\geq \delta}\Big[\psi(t-u,X_{u-}+z)-\psi(t-u,X_{u-})-D_x\psi(t-u,X_{u-})^T\,h(z)\Big]\,\mu(du,dz)\nonumber\\
			&=\int_0^s \int_{\|z\|\geq \delta}\bigg[\psi(t-u,X_{u-}+z)-\psi(t,x+z)+\psi(t,x)-\psi(t-u,X_{u-})\nonumber\\
			&\qquad\qquad\qquad\qquad\qquad\qquad+\left.\Big(D_x\psi(t,x)-D_x\psi(t-u,X_{u-})\Big)^T\, h(z)\right]\,\mu(du,dz)\nonumber\\
			&\quad+\int_0^s\int_{\|z\|\geq \delta}\Big[\psi(t,x+z)-\psi(t,x)-D_x\psi(t,x)^T\, h(z)\Big]\,\mu(du,dz).
		\end{align}
        And by the same arguments as above, we obtain for sufficiently small $s$ that
		\begin{align}\label{ineq:5bigjumps}
			\Bigg|E_P&\Bigg[\int_0^s \int_{\|z\|\geq \delta}\bigg[\psi(t-u,X_{u-}+z)-\psi(t,x+z)+\psi(t,x)-\psi(t-u,X_{u-})\nonumber\\
			&\qquad\qquad\qquad\quad\qquad\qquad\qquad\quad+\Big(D_x\psi(t,x)-D_x\psi(t-u,X_{u-})\Big)^T\, h(z)\bigg]\,\mu(du,dz)\Bigg]\Bigg|\nonumber\\
			&\leq E_P\left[ \int_0^s \int_{\|z\|\geq \delta}\Big[2\,L\,(u+\|X_{u-}-x\|)+L\,\|h(z)\|\,(u+\|X_{u-}-x\|)\Big]\,\mu(du,dz)\right]\nonumber\\
			&\leq L\,\left(2+C_h\right)\, E_P\left[\int_0^s(u+\|X_{u-}-x\|)\int_{\|z\|\geq \delta}\|z\|^2\wedge\|z\|\,k^P_u(dz)\,du\right]\nonumber\\
			&\leq L\,\left(2+C_h\right)\,\mathcal{K}\,(\|x\|+1)^2 \,\left(C_{\mathcal{K},1}+1+C_{\mathcal{K},2}\right)\,(s^{\frac{3}{2}}+s+s^{\frac{1}{2}}+1)\,s^{\frac{3}{2}}.
		\end{align}
        For the expectation of the small jumps in \eqref{eq:jumps}, we have
		\begin{flalign}
			\Bigg|E_P&\left[\int_0^s\int_{\|z\|< \delta}\left[\psi(t,x+z)-\psi(t,x)-D_{x}\psi(t,x)^T\,z\right]\,\mu(du,dz)\right]\Bigg| \nonumber\\
			&=\frac{1}{2}\,\Bigg|E_P\left[\int_0^s\int_{\|z\|< \delta}\tr\Big[D_{x}^2\psi(t-u,X_{u-}+\zeta_z)\,z\,z^T\Big]\,\mu(du,dz)\right]\Bigg|\label{eq:TaylorExpansion}\\
			& = \frac{1}{2}\, E_P\left[\int_0^s \int_{\|z\|< \delta} d\,\Big\|D_{x}^2\psi(t-u,X_{u-}+\zeta_z)\Big\|\,\|z\|^2\;\mu(du,dz) \right] \nonumber\\
			&\leq\frac{d}{2}\,\|D_{x}^2\psi\|_\infty\, E_P\left[\int_0^s\int_{\|z\|< \delta}\|z\|^2\,k^P_u(dz)\,du\right]\nonumber\\
			& \leq \frac{d}{2}\,\|D_{x}^2\psi\|_\infty\, E_P\left[\int_0^s \sup_{(\beta,\alpha,\nu)\in\Theta}\int_{\|z\|< \delta}\|z\|^2\,\nu(X_u;dz)\,du\right] \nonumber\\
			&=\frac{d}{2}\,\|D_{x}^2\psi\|_\infty\, E_P\left[\int_0^s\mathcal{K}_\delta(X_u)\,du\right]\nonumber\\
			&\leq\underbrace{\frac{d}{2}\,\|D_{x}^2\psi\|_\infty\,E_P\left[\sup_{0\leq u\leq s}\mathcal{K}_\delta(X_u)\right]}_{=:C_{\delta,s}}\,s, \label{eq:DefinitionCDeltaS}
		\end{flalign}
		where such a $\zeta_z \in \mathbb{R}^d$ in \eqref{eq:TaylorExpansion} exists by using a Taylor expansion and $K_\delta(x)$ is as defined in Assumption \ref{con:C}.
		
		Choose $\hat{C}>0$ such that $\hat{C}\,s^{\frac{3}{2}}$ is equal or greater than the sum of the right-hand side of equations \eqref{ineq:1}, \eqref{ineq:2}, \eqref{ineq:4} and \eqref{ineq:5bigjumps}.
		For instance, consider
		\begin{equation}
		    \hat{C}:=L\,\left(4+d+C_h\right)\,(\mathcal{K}+1)\,(\|x\|+1)^2 \,\left(C_{\mathcal{K},1}+1+C_{\mathcal{K},2}\right)\,(s^{\frac{3}{2}}+s+s^{\frac{1}{2}}+1).
		\end{equation}
		Then, combining \eqref{eq:ItoVisc} with \eqref{eq:time} - \eqref{eq:DefinitionCDeltaS}, yields
		\begin{flalign}\label{ineq:combined}
			E_P&\left[\psi(t-s,X_s)-\psi(t,x)\right]\nonumber\\
			&\leq\hat{C}\,s^{\frac{3}{2}}+C_{\delta,s}\,s-\int_{0}^s\partial_t\psi(t,x)\,du\nonumber\\
			&\quad +E_P\Bigg[\int_0^sD_x\psi(t,x)^T\,b_u^P\,du+\int_0^s\tr\Big[D_{x}^2\psi(t,x)\,a^P_u\Big]\,du\nonumber\\
			&\qquad\qquad\qquad\qquad\qquad\qquad+\int_0^s\int_{\|z\|\geq \delta}\left[\psi(t,x+z)-\psi(t,x)-D_{x}\psi(t,x)^T\,h(z)\right]\,\mu(du,dz)\Bigg]\nonumber\\
			&=\hat{C}\,s^{\frac{3}{2}}+\left(C_{\delta,s}-\partial_t\psi(t,x)\right)s\nonumber\\
			&\quad+E_P\Bigg[\int_0^s \bigg \lbrace D_x\psi(t,x)^T\,b_u^P+\tr\Big[D_{x}^2\psi(t,x)\,a^P_u\Big]\nonumber\\
			&\qquad\qquad\qquad\qquad\qquad\qquad+\int_{\mathbb{R}^d}\left[\psi(t,x+z)-\psi(t,x)-D_{x}\psi(t,x)^T\,h(z)\right]\,k^P_u(dz)\bigg \rbrace\,du\Bigg]\nonumber\\
			&\quad - \underbrace{E_P\left[\int_0^s\int_{\|z\|< \delta}\left[\psi(t,x+z)-\psi(t,x)-D_{x}\psi(t,x)^T\,h(z)\right]\,\mu(du,dz)\right]}_{\leq C_{\delta,s}\,s\quad \text{due to \eqref{eq:DefinitionCDeltaS}}} \nonumber\\
			&\leq \hat{C}\,s^{\frac{3}{2}}+\left(2\,C_{\delta,s}-\partial_t\psi(t,x)\right)s\nonumber\\
			&\quad+E_P\Bigg[\int_0^s\sup_{(\beta,\alpha,\nu)\in\Theta} \bigg\{D_x\psi(t,x)^T\,\beta(X_u)+\tr\Big[D_{x}^2\psi(t,x)\,\alpha(X_u)\Big]\nonumber\\
			&\qquad\qquad\qquad\qquad\qquad\qquad+\int_{\mathbb{R}^d}\left[\psi(t,x+z)-\psi(t,x)-D_{x}\psi(t,x)^T\,h(z)\right]\,\nu(X_u; dz)\bigg\}\,du\Bigg]\nonumber\\
			&=\hat{C}\,s^{\frac{3}{2}}+\left(2\,C_{\delta,s}-\partial_t\psi(t,x)\right)s+E_P\left[\int_0^s\mathcal{A}_{X_u}\psi(t,x)\,du\right]\nonumber\\
			&\leq\hat{C}\,s^{\frac{3}{2}}+\left(2\,C_{\delta,s}-\partial_t\psi(t,x)\right)s+s\,E_P\left[\sup_{0\leq u\leq s}\mathcal{A}_{X_u}\psi(t,x)\right],
		\end{flalign}
		where $\mathcal{A}_{y}$ is introduced in \eqref{eq:def-nonlinear-A}.
		Since $P\in\mathcal{P}_x(\Theta)$ was arbitrary, we conclude from \eqref{ineq:combined} and \eqref{ineq:0}, divided by $s>0$, that
		\begin{equation}
			0\leq \hat{C}\,s^{\frac{1}{2}}+2\,C_{\delta,s}-\partial_t\psi(t,x)+\mathcal{E}^x\left(\sup_{0\leq u\leq s}\mathcal{A}_{X_u}\psi(t,x)\right).
		\end{equation}
		
		\noindent We proceed by studying the convergence of the last term for $s\downarrow0$.
		Therefore, note that
		\begin{align}\label{ineq:A0}
			\Big| \mathcal{E}^x\left(\sup_{0\leq u\leq s}\mathcal{A}_{X_u}\psi(t,x)\right)-\mathcal{A}_x\psi(t,x)\Big|
			&\leq \mathcal{E}^x\left(\left| \sup_{0\leq u\leq s}\mathcal{A}_{X_u}\psi(t,x)-\mathcal{A}_x\psi(t,x)\right|\right) \nonumber\\
			&\leq \mathcal{E}^x\left(  \sup_{0\leq u\leq s} \left|\mathcal{A}_{X_u}\psi(t,x)-\mathcal{A}_x\psi(t,x)\right|\right).
		\end{align}
		
		Further, observe that we have $\mathcal{A}_{X_u}\psi(t,x)=0$ on $\{X_u\notin\textnormal{S}\}$.			
		On $\{X_u\in\textnormal{S}\}$, Corollary \ref{cor:lin_bound} together with the definition of $\mathcal{A}_y$ yields
		\begin{align}
			&| \mathcal{A}_{X_u}\psi(t,x) - \mathcal{A}_{x}\psi(t,x) |\nonumber\\
			&\leq \sup_{(\beta,\alpha,\nu)\in\Theta} \Bigg\{ \Big| D_{x}\psi(t,x)^T\, (\beta_1,\ldots,\beta_d)\,(X_u-x)\Big| +\frac{1}{2}\,\left|\tr\Big[D_{x}^2\psi(t,x)\,(\alpha_1,\ldots,\alpha_d)\,(X_u-x)\Big]\right| \nonumber\\
			&\quad\qquad\qquad\qquad+ \left| \int_{\mathbb{R}^d} \left[\psi(t,x+z)-\psi(t,x)- D_x\psi(t,x)^T\,h(z)\right] \,(\nu_1(dz),\ldots,\nu_d(dz))\,(X_u-x) \right|\Bigg\} \nonumber \\
			&\leq3\,\mathcal{K}\,\|D_x\psi\|_\infty\,\|X_u-x\|+3\,\mathcal{K}\,\frac{d}{2}\,\|D_{x}^2\psi\|_\infty\,\|X_u-x\| \nonumber\\
			&\quad+\int_{\|z\|\geq \delta} \Big[ \left|\psi(t,x+z)-\psi(t,x)\right| + \|D_x\psi(t,x)\|\,\|h(z)\| \Big] \,\left|(\nu_1(dz),\ldots,\nu_d(dz))\,(X_u-x)\right| \nonumber\\
			&\quad+\left| \int_{\|z\|< \delta} \left[\psi(t,x+z)-\psi(t,x)- D_x\psi(t,x)^T\,z\right] \,(\nu_1(dz),\ldots,\nu_d(dz))\,(X_u-x) \right|\nonumber\\
			&\leq3\,\mathcal{K}\, \Big(\|D_x\psi\|_\infty+\frac{d}{2}\,\|D_{x}^2\psi\|_\infty\Big)\, \|X_u-x\|
			+\int_{\|z\|\geq \delta} \Big[L\,\|z\|+\|D_{x}\psi\|_\infty\,\|h(z)\|\Big]\,|(\nu_1(dz),\ldots,\nu_d(dz))(X_u-x)|\nonumber\\
			&\quad+\frac{1}{2}\int_{\|z\|< \delta}\tr\Big[D_{x}^2\psi(t,x+\zeta_z)\,z\,z^T\Big]\,|(\nu_1(dz),\ldots,\nu_d(dz))(X_u-x)|\nonumber\\
			&\leq 3\,\mathcal{K}\, \Big(\|D_x\psi\|_\infty+\frac{d}{2}\,\|D_{x}^2\psi\|_\infty\Big)\, \|X_u-x\| +\int_{\|z\|\geq \delta} \Big[L\,\|z\|+\|D_{x}\psi\|_\infty\,\|h(z)\|\Big]\,|(\nu_1(dz),\ldots,\nu_d(dz))(X_u-x)|\nonumber\\
			&\quad+\frac{d}{2}\int_{\|z\|< \delta}\left\|D_{x}^2\psi(t,x+\zeta_z)\,z\,z^T\right\|\,|(\nu_1(dz),\ldots,\nu_d(dz))(X_u-x)|\nonumber\\
			&\leq 3\,\mathcal{K}\, \Big(\|D_x\psi\|_\infty+\frac{d}{2}\,\|D_{x}^2\psi\|_\infty\Big)\, \|X_u-x\|		
			+ 3\,\mathcal{K}\,\Big(L+\|D_{x}\psi\|_\infty\Big)\,C_h\,\|X_u-x\|
			+3\,\mathcal{K}\,\frac{d}{2}\,\|D_{x}^2\psi\|_\infty\,\|X_u-x\|\label{eq:Explanation2.0}\\
			&\leq \underbrace{\Big(\Big(L+2\,\|D_{x}\psi\|_\infty\Big)\,C_h+d\,\|D_{x}^2\psi\|_\infty\Big)\,3\,\mathcal{K}}_{=:\bar{C}}\,\|X_u-x\|. \label{ineq:A1}
		\end{align}
		Thus, we obtain
		\begin{align}
			\mathcal{E}^x&\left(\sup_{0\leq u\leq s} \left|\mathcal{A}_{X_u}\psi(t,x)-\mathcal{A}_x\psi(t,x)\right|\right)\nonumber\\
			&\leq \mathcal{E}^x\left(  \sup_{0\leq u\leq s} \left|\mathcal{A}_{X_u}\psi(t,x)-\mathcal{A}_x\psi(t,x)\right|\,\mathbf{1}_{\{X_u\in\textnormal{S}\}}\right)+\mathcal{E}^x\left(\sup_{0\leq u\leq s}|\mathcal{A}_x\psi(t,x)|\,\mathbf{1}_{\{X_u\notin\textnormal{S}\}}\right)\nonumber\\
			& \leq \mathcal{E}^x\left(\sup_{0\leq u\leq s}\bar{C}\,\|X_u-x\|\,\mathbf{1}_{\{X_u\in\textnormal{S}\}}\right)+|\mathcal{A}_x\psi(t,x)|\,\mathcal{E}^x\left( \sup_{0\leq u\leq s}\mathbf{1}_{\{X_u\notin\textnormal{S}\}}\right).
		\end{align}
		For estimating the latter expectation, recall that $x\in\textnormal{S}$ and $x\notin\partial\textnormal{S}$, i.e., $\rho:=\textnormal{dist}(x,\mathbb{R}^d\setminus\textnormal{S})>0$, and
		\begin{equation}\label{ineq:rho}
			\rho\,\mathcal{E}^x\left( \sup_{0\leq u\leq s}\mathbf{1}_{\{X_u\notin\textnormal{S}\}}\right)
			\leq \mathcal{E}^x\left( \sup_{0\leq u\leq s}\|X_u-x\|\,\mathbf{1}_{\{X_u\notin\textnormal{S}\}}\right).
		\end{equation}
		
		\noindent Combining \eqref{ineq:A0} - \eqref{ineq:rho}, yields
		\begin{align}
			\Big| \mathcal{E}^x&\left(\sup_{0\leq u\leq s}\mathcal{A}_{X_u}\psi(t,x)\right)-\mathcal{A}_x\psi(t,x)\Big|\nonumber\\
			&\leq \bar{C}\,\mathcal{E}^x\left(\sup_{0\leq u\leq s}\|X_u-x\|\,\mathbf{1}_{\{X_u\in\textnormal{S}\}}\right)+|\mathcal{A}_x\psi(t,x)|\,\rho^{-1}\,\mathcal{E}^x\left( \sup_{0\leq u\leq s}\|X_u-x\|\,\mathbf{1}_{\{X_u\notin\textnormal{S}\}}\right)\nonumber\\
			&\leq \left(\bar{C}+|\mathcal{A}_x\psi(t,x)|\,\rho^{-1}\right)\,C_{\mathcal{K},1}\,(\|x\|+1)\,(s+s^{\frac{1}{2}})
		\end{align}
		for small enough $s\geq 0$ due to Corollary \ref{cor:TimeCont0}.
		This establishes the point-wise limit 
		\begin{equation}
			\lim_{s\downarrow0}\mathcal{E}^x\left(\sup_{0\leq u\leq s}\mathcal{A}_{X_u}\psi(t,x)\right)=\mathcal{A}_x\psi(t,x).
		\end{equation}
		Finally, since $C_{\delta,s}$ is continuous in $s$, cf. \eqref{eq:DefinitionCDeltaS}, first letting $s\downarrow0$ and then $\delta\downarrow0$ in \eqref{ineq:combined}, we obtain
		\begin{align}
			0&\leq \hat{C}\,s^{\frac{1}{2}}+2\,C_{\delta,s}-\partial_t\psi(t,x)+\mathcal{E}^x\left(\sup_{0\leq u\leq s}\mathcal{A}_{X_u}\psi(t,x)\right)\nonumber\\
			&\overset{s\downarrow0}{=}2\,C_{\delta,0}-\partial_t\psi(t,x)+\mathcal{A}_x\psi(t,x)\nonumber\\
			&\overset{\delta\downarrow0}{=}-\partial_t\psi(t,x)+\mathcal{A}_x\psi(t,x),
		\end{align}
		where we used that $C_{\delta,0}$ is a multiple of $\mathcal{K}_{\delta}(x)$, and $\lim_{\delta\downarrow0}\mathcal{K}_\delta(x)=0$ for all $x\in\mathbb{R}^d$ due to Assumption \ref{con:C}.
		Thus, the value function $v$ satisfies \eqref{eq:def-visc2}.
	\end{proof}
	
    \section{Uniqueness of the Viscosity Solution} \label{sec:Uniqueness}
We now study uniqueness of viscosity solutions of the PIDE \eqref{eq:PIDE} - \eqref{eq:PIDE-initial}.
	Unfortunately, comparison and uniqueness results for viscosity solutions tend to become more involved when considering an arbitrary domain.
	Even for the choice $\textnormal{S}=\mathbb{R}^d$, the associated PIDE \eqref{eq:PIDE} fails to satisfy the necessary assumptions, which allow to apply comparison and uniqueness results, cf. eg. \cite{barles_second-order_2008}, \cite[pp. 30 - 31]{neufeld_nonlinear_2016}, \cite[Chapter 2]{hollender_levy-type_2016}, \cite{jakobsen_maximum_2006}.
	This is due to our definition in \eqref{eq:theta1} - \eqref{eq:theta3}.
	More precisely, since we do not have that $\Theta(x)\subseteq\mathbb{R}^d\times\mathbb{S}_+\times\mathfrak{L}_+$ for all $x\in\textnormal{S}$, the degenerate ellipticity condition is not satisfied.
	One possible solution would be to restrict the supremum in the definition of $\mathcal{A}_y$ in \eqref{eq:def-nonlinear-A} to parameters $(\beta,\alpha,\nu)\in\Theta$ such that $\alpha(y)\in\mathbb{S}_+$ and $\nu(y)\in\mathfrak{L}_+$.
	This is in line with the probabilistic reasoning behind the construction since the differential characteristics $(b,a,k)$ takes values in $\mathbb{R}^d\times\mathbb{S}_+\times\mathfrak{L}_+$, 
	but it would cause problems in the first estimation in \eqref{ineq:A1}.
	In order to circumvent these issues, we consider the canonical state space $\textnormal{S}=\mathbb{R}^d$ and an alternative parameter function by following the approach of \cite{fadina_affine_2019}.
	
	For $(\beta,\alpha,\nu)\in( \mathbb{R}^d)^{d+1}\times\mathbb{S}^{d+1}\times\mathfrak{L}^{d+1}$, define  
	\begin{align}
		\hat{\beta}(x)&:=\beta_0+(\beta_1,\ldots,\beta_d)\,x,\\
		\hat{\alpha}(x)&:=\alpha_0+(\alpha_1,\ldots,\alpha_d)\,x^+,\\
		\hat{\nu}(x)&:=\nu_0 \label{eq:UniquenessCharacteristicThirdComponent}
	\end{align}
	for all $x\in\mathbb{R}^d$, where $x^+:=x\vee0\in\mathbb{R}^d$ coordinate-wise.
	Analogously, for any subset $\Theta\subseteq (\mathbb{R}^d)^{d+1}\times\mathbb{S}^{d+1}\times\mathfrak{L}^{d+1}$ and $x\in\mathbb{R}^d$, define
	\begin{equation} \label{eq:definitionHatTheta}
		\hat{\Theta}(x):=\Big\{ (\hat{\beta}(x),\hat{\alpha}(x),\hat{\nu}(x))\,:\, (\beta,\alpha,\nu)\in\Theta \Big\}. 
	\end{equation}
	As in \eqref{eq:NLAJD} we define the family $\lbrace \mathcal{P}_x(\hat{\Theta}) \rbrace_{x \in \mathbb{R}}$ by
	\begin{equation}\label{eq:NLAJDFadina}
		\mathcal{P}_x(\hat{\Theta}):=\Big\{P\in\mathfrak{P}_{sem}^{ac}(\Omega)\,:\,\;P(X_0=x)=1;\:(b^P\!, a^P\!, k^P)\in \hat{\Theta}(X)\text{ $dt\otimes dP$-a.e.}\Big\},
	\end{equation}
	and let $\{\hat{\mathcal{E}}^x\}_{x\in\mathbb{R}^d}$ denote the associated sublinear expectations.
	\begin{rem}
	    Note that the function $\hat{\nu}(x)$ is constant. This is necessary in order to prove that the corresponding value function is a unique viscosity solution of the associated PIDE. 
	\end{rem}
	
	\begin{thm}\label{thm:hat}
		Let $\Theta\subseteq (\mathbb{R}^d)^{d+1}\times\mathbb{S}_+^{d+1}\times\mathfrak{L}_+^{d+1}$ satisfy Condition \ref{con:C}. Assume that there exists a constant $\hat{L}>0$ such that for all $x,y\in\mathbb{R}^d$
		\begin{equation}\label{con:sqrt-alpha}
			\sup_{(\alpha, \beta, \nu_0) \in \Theta} \left\| \sqrt{\hat{\alpha}(x)} -\sqrt{\hat{\alpha}(y)} \right\| \leq \hat{L}\,\|x-y\|,
		\end{equation}
		where $\sqrt{\hat{\alpha}(\cdot)}$ denotes the unique square root of the matrix $\hat{\alpha}(\cdot)$.
		Let $\phi\in\textnormal{Lip}_b(\mathbb{R}^d)$.
		Define the value function 
		\begin{equation}\label{eq:def-value-function-hat}
			\hat{v}(t,x):=\hat{\mathcal{E}}^x(\phi(X_t)).
		\end{equation}
		If $\hat{v}$ is continuous, then $\hat{v}$ is the unique viscosity solution of the PIDE
		\begin{align}\label{eq:PIDE-hat}
			\partial_t v(t,x) -\hat{\mathcal{A}}_x v(t,x)&=0 
		\end{align}
		in $\mathbb{R}_+\times\mathbb{R}^d$ such that the initial condition \eqref{eq:PIDE-initial} holds on $\mathbb{R}^d$, where the operator $\hat{\mathcal{A}}_y$ is given by
		\begin{align}
			\hat{\mathcal{A}}_y f(x)&:=\sup_{(\beta,\alpha,\nu)\in\Theta} \Bigg\{D_x f(x)^T\,\hat{\beta}(y)+\frac{1}{2}\,\tr\Big[D_{x}^2 f(x)\,\hat{\alpha}(y)\Big]
			+\int_{\mathbb{R}^d} \left[f(x+z)-f(x)-D_x f(x)^T\,h(z)\right]\,\hat{\nu}(y;dz) \Bigg\},
		\end{align}
		with $h(z):=z\,\mathbf{1}_{\{\|z\|\leq 1\}}(z)$ for all $z\in\mathbb{R}^d$.
	\end{thm}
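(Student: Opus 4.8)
The plan is to split the statement into an \emph{existence} part, obtained by adapting the machinery of Sections \ref{sec:Dynamic}--\ref{sec:PIDE} to the modified parameter map, and a \emph{uniqueness} part, which carries the real weight and will be proved through a comparison principle for the PIDE \eqref{eq:PIDE-hat}. For existence I would first note that, since $\Theta\subseteq(\mathbb{R}^d)^{d+1}\times\mathbb{S}_+^{d+1}\times\mathfrak{L}_+^{d+1}$ and $x^+\ge0$ coordinatewise, the conic combination $\alpha_0+\sum_i x_i^+\alpha_i$ stays in $\mathbb{S}_+$ and $\hat\nu(x)\equiv\nu_0\in\mathfrak{L}_+$, so that $\hat\Theta(x)\subseteq\mathbb{R}^d\times\mathbb{S}_+\times\mathfrak{L}_+$ for every $x$; moreover $(\hat\Theta,\mathbb{R}^d)$ still satisfies Assumption \ref{con:C} with the same constant $\mathcal K$ (one has $\trinorm{\hat\beta}=\trinorm\beta$, $\|\hat\nu(x)\|=\|\nu_0\|\le\trinorm\nu$, $\sup_x\|\hat\alpha(x)\|/(\|x\|+1)<\infty$, and $\mathcal K_\delta(x;\hat\Theta)=\mathcal K_\delta(0;\Theta)\to0$), and $\mathcal P_x(\hat\Theta)\ne\emptyset$ because, by \eqref{con:sqrt-alpha}, for any fixed $\theta\in\Theta$ the SDE with the Lipschitz coefficients $\hat\beta$, $\sqrt{\hat\alpha}$ and constant Lévy measure $\nu_0$ admits a solution. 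Consequently Lemmas \ref{lem:TimeCont0}--\ref{lem:integrability} and their corollaries carry over verbatim to $\{\hat{\mathcal{E}}^x\}$, and running the argument of Proposition \ref{prop:Existence} with $\textnormal{S}=\mathbb{R}^d$ (so that all boundary terms disappear and $\rho=\infty$) shows that the continuous $\hat v$ is a viscosity solution of \eqref{eq:PIDE-hat} satisfying \eqref{eq:PIDE-initial}.

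The bulk of the proof is then the comparison principle: \emph{if $u$ is a bounded u.s.c.\ subsolution and $w$ a bounded l.s.c.\ supersolution of \eqref{eq:PIDE-hat} on $(0,T)\times\mathbb{R}^d$ with $u(0,\cdot)\le w(0,\cdot)$, then $u\le w$}. Granting this, applying it to $(\hat v,w)$ and to $(u,\hat v)$ for any other continuous solution $u$ gives $u=\hat v$ (note $\hat v$ is bounded since $\phi$ is). To prove comparison I would argue by contradiction: assume $M:=\sup(u-w)>0$, replace $u$ by $u_\eta(t,x):=u(t,x)-\eta/(T-t)$ to get a \emph{strict} subsolution, $\partial_t u_\eta-\hat{\mathcal{A}}_x u_\eta\le-\eta(T-t)^{-2}$ (the $x$-independent correction does not enter $\hat{\mathcal{A}}_x$), still with $\sup(u_\eta-w)>0$ for small $\eta$, and then perform the usual doubling of variables by maximising over $[0,T]\times\mathbb{R}^d\times\mathbb{R}^d$
\[
\Phi_{\epsilon,\gamma}(t,x,y):=u_\eta(t,x)-w(t,y)-\tfrac{1}{2\epsilon}\|x-y\|^2-\gamma\big(\chi(x)+\chi(y)\big),
\]
where $\chi$ is a smooth coercive weight chosen compatibly with the at most linear growth of $\hat\beta,\hat\alpha$, as in \cite{fadina_affine_2019,neufeld_nonlinear_2016}. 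The classical penalisation estimates yield a maximiser $(t_0,x_0,y_0)$ with $\|x_0-y_0\|^2/\epsilon\to0$ as $\epsilon\downarrow0$ and, using the ordered initial data, $t_0>0$ for $\epsilon,\gamma$ small.

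Next I would apply the parabolic Jensen--Ishii lemma for nonlocal operators (cf.\ \cite{barles_second-order_2008}, \cite[Ch.~2]{hollender_levy-type_2016}, \cite{kuhn_viscosity_2019}), splitting the integral term at a level $\kappa\in(0,1)$: small jumps $\|z\|<\kappa$ are tested against the smooth part of $\Phi_{\epsilon,\gamma}$, large jumps $\|z\|\ge\kappa$ against $u_\eta$ resp.\ $w$ (legitimate since these are bounded and $\nu_0$ has finite first moment at infinity, $\int_{\|z\|\ge\kappa}\|z\|\,\nu_0(dz)<\infty$, by \eqref{def:extended_norm_Levy}--\eqref{eq:conC1}). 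Subtracting the sub- and supersolution inequalities (they share the same time-derivative slot) gives, with $p:=(x_0-y_0)/\epsilon$ and matrices $X,Y$ with $\begin{pmatrix}X&0\\0&-Y\end{pmatrix}\le\tfrac3\epsilon\begin{pmatrix}I&-I\\-I&I\end{pmatrix}+O(\gamma)$,
\[
\frac{\eta}{(T-t_0)^2}\le\sup_{(\beta,\alpha,\nu)\in\Theta}\Big\{p^{T}\big(\hat\beta(x_0)-\hat\beta(y_0)\big)+\tfrac12\tr\big[X\hat\alpha(x_0)-Y\hat\alpha(y_0)\big]+\mathcal I^\kappa+\mathcal I_\kappa\Big\}+O(\gamma),
\]
and I would then estimate uniformly over $\Theta$: by Corollary \ref{cor:lin_bound}, $p^{T}(\hat\beta(x_0)-\hat\beta(y_0))=p^{T}(\beta_1,\dots,\beta_d)(x_0-y_0)\le3\mathcal K\|x_0-y_0\|^2/\epsilon$; via $\hat\alpha=\sqrt{\hat\alpha}\sqrt{\hat\alpha}$ and the matrix inequality the trace term is $\le\tfrac C\epsilon\|\sqrt{\hat\alpha(x_0)}-\sqrt{\hat\alpha(y_0)}\|^2\le\tfrac{C\hat L^2}\epsilon\|x_0-y_0\|^2$ by \eqref{con:sqrt-alpha}; the small-jump term $\mathcal I^\kappa$ is $O(\mathcal K_\kappa(x_0))+O(\gamma)$ since it is tested against $\tfrac12D_x^2\big(\tfrac{1}{2\epsilon}\|\cdot-y_0\|^2+\gamma\chi\big)$ integrated against $\nu_0|_{\{\|z\|<\kappa\}}$; and the large-jump term $\mathcal I_\kappa$ is $O(\gamma)$, because maximality of $\Phi_{\epsilon,\gamma}$ forces $[u_\eta(t_0,x_0+z)-u_\eta(t_0,x_0)]-[w(t_0,y_0+z)-w(t_0,y_0)]\le\gamma(\chi(x_0+z)-\chi(x_0)+\chi(y_0+z)-\chi(y_0))$, integrable against $\nu_0|_{\{\|z\|\ge\kappa\}}$. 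Letting $\epsilon\downarrow0$, then $\gamma\downarrow0$, then $\kappa\downarrow0$ (using $\mathcal K_\kappa(x_0)\to0$ from Assumption \ref{con:C}) makes the right-hand side vanish, contradicting $\eta/(T-t_0)^2>0$; hence $M\le0$, and uniqueness follows.

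I expect the main obstacle to be the nonlocal part of this argument: one must arrange the jet/viscosity machinery so that small jumps are absorbed into the smooth test function (hence controlled by $\mathcal K_\kappa$) while large jumps are controlled by the bounded solutions together with the coercive penalisation, and this works \emph{only because} $\hat\nu(x)\equiv\nu_0$ is state-independent, so no difference $\int[\cdots]\nu(x_0;dz)-\int[\cdots]\nu(y_0;dz)$ of Lévy measures ever appears — precisely the reason the construction replaces $\nu(x)$ by the constant $\nu_0$ in \eqref{eq:UniquenessCharacteristicThirdComponent}, since such a difference could not be handled with only a Lipschitz assumption. The companion difficulty, the second-order term, is exactly what the Lipschitz hypothesis \eqref{con:sqrt-alpha} on $\sqrt{\hat\alpha}$ is tailored for; accommodating the linear growth of $\hat\beta,\hat\alpha$ in the choice of the weight $\chi$ is a further technical point, treated as in \cite{fadina_affine_2019}.
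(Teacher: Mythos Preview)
Your existence argument matches the paper's: Lemmas \ref{lem:dynamic-hat} and \ref{lem:existence-hat} do exactly what you describe, the only point you gloss over being the re-verification of inequality \eqref{ineq:A1} for $\hat{\mathcal{A}}$ (which needs the elementary $\|x^+-y^+\|\le\|x-y\|$, as the paper notes explicitly).

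For uniqueness the paper takes a shorter route: rather than running the doubling-of-variables argument itself, it simply checks that the PIDE \eqref{eq:PIDE-hat} is a Hamilton--Jacobi--Bellman equation satisfying Assumption \ref{con:HJB} (boundedness, tightness, and the Lipschitz/continuity condition, the latter coming precisely from \eqref{con:sqrt-alpha} together with Corollary \ref{cor:lin_bound}) and then invokes Hollender's comparison principle, recorded as Proposition \ref{cor:ComparisonPrinciple}. Your sketch is essentially a reconstruction of the proof of that black-box result, so the two approaches are compatible; yours is more self-contained, the paper's more economical.

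One technical slip in your sketch: the small-jump contribution $\mathcal I^\kappa$ carries a factor $1/\epsilon$ (since $D_x^2\big(\tfrac{1}{2\epsilon}\|\cdot-y_0\|^2\big)=\tfrac1\epsilon I$), so it is $\tfrac{C}{\epsilon}\,\mathcal K_\kappa(0)+O(\gamma)$, not $O(\mathcal K_\kappa)$ uniformly in $\epsilon$. With your stated order of limits ($\epsilon\downarrow0$ first, $\kappa\downarrow0$ last) this term blows up. The fix is standard---send $\kappa\downarrow0$ \emph{before} $\epsilon$ (this is how the nonlocal Jensen--Ishii machinery in \cite[Ch.~2]{hollender_levy-type_2016} and \cite{barles_second-order_2008} is set up), using the tightness condition at the origin---but as written the order is wrong. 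Apart from this, your identification of why $\hat\nu(x)\equiv\nu_0$ is essential (no difference of L\'evy measures to control) and why \eqref{con:sqrt-alpha} handles the second-order term is exactly the point.
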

	As the proofs in Sections \ref{sec:Dynamic} - \ref{sec:PIDE} in general do not depend specifically on the form of the functions $\beta, \alpha, \nu$, the main results of these two sections carry over to this setting. Thus, the following results hold.
	\begin{lem}\label{lem:dynamic-hat}
		Let $\Theta\subseteq (\mathbb{R}^d)^{d+1}\times\mathbb{S}^{d+1}\times\mathfrak{L}^{d+1}$ satisfy Assumption \ref{con:C}.
		Then the family $\{ \mathcal{P}_x(\hat{\Theta})\}_{x\in\mathbb{R}^d}$ is amenable to dynamic programming in the sense of Proposition \ref{prop:DynamicProgramming}, and the associated family of sublinear expectations $\{ \hat{\mathcal{E}}^x\}_{x\in\mathbb{R}^d}$ satisfies the Markov property from Lemma \ref{lem:SubMarkov}.
	\end{lem}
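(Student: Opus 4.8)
The plan is to observe that neither the dynamic programming principle of Proposition~\ref{prop:DynamicProgramming} nor the Markov property of Corollary~\ref{lem:SubMarkov} uses anything about the specific form of the parameter maps $\beta,\alpha,\nu$: the only structural input is the Borel-measurability of the graph of $x\mapsto\Theta(x)\cap(\mathbb{R}^d\times\mathbb{S}_+\times\mathfrak{L}_+)$ from Lemma~\ref{lem:Borel}, which is exactly what feeds into Proposition~\ref{prop:Hollender4.31} and Theorem~\ref{thm:Hollender4.29}. Accordingly, it suffices to prove the analogue of Lemma~\ref{lem:Borel} for $\hat{\Theta}$, i.e.\ that
\begin{equation}
	\hat{G}:=\left\{(x,b,a,k)\in\mathbb{R}^d\times\mathbb{R}^d\times\mathbb{S}_+\times\mathfrak{L}_+\,:\,(b,a,k)\in\hat{\Theta}(x)\right\}
\end{equation}
is Borel, and then to rerun the arguments of Section~\ref{sec:Dynamic} with $\Theta$ replaced by $\hat{\Theta}$.

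For the measurability of $\hat{G}$ I would repeat the proof of Lemma~\ref{lem:Borel} essentially line by line, as already anticipated in Remark~\ref{rem:Borel}, using only separability and two continuity facts. First, for fixed $\theta=(\beta,\alpha,\nu)\in\Theta$ the map $x\mapsto\hat{\theta}(x):=(\hat{\beta}(x),\hat{\alpha}(x),\hat{\nu}(x))$ is continuous (hence Borel): $\hat{\beta}$ is affine, $x\mapsto x^+$ is continuous coordinate-wise so $\hat{\alpha}$ is continuous, and $\hat{\nu}\equiv\nu_0$ is constant. Second, for fixed $x$ the map $\theta\mapsto\hat{\theta}(x)$ is continuous, since $\hat{\beta}(x)$, $\hat{\alpha}(x)$ and $\hat{\nu}(x)$ depend linearly on $(\beta_0,\ldots,\beta_d)$, $(\alpha_0,\ldots,\alpha_d)$ and $(\nu_0,\ldots,\nu_d)$, respectively. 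Fixing a countable dense subset $\Theta_c\subseteq\Theta$, the map $(x,b,a,k)\mapsto\inf_{\theta\in\Theta_c}d_\pi\big((b,a,k),\hat{\theta}(x)\big)$ is a countable infimum of Borel maps, hence Borel, and its zero set equals $\hat{G}$: the inclusion $\supseteq$ is trivial, and for $\subseteq$ one picks $\theta_n\in\Theta_c$ with $d_\pi\big((b,a,k),\hat{\theta}_n(x)\big)\to 0$, uses $\bar{\Theta_c}=\Theta$ and closedness of $\Theta$ to pass to a limit $\theta:=\lim_n\theta_n\in\Theta$, and concludes $d_\pi\big((b,a,k),\hat{\theta}(x)\big)=0$ from the two continuity facts together with the continuity of $d_\pi$, so that $(b,a,k)=\hat{\theta}(x)\in\hat{\Theta}(x)$.

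With $\hat{G}$ Borel, I would then set $\bar{H}(x):=\hat{\Theta}(x)\cap(\mathbb{R}^d\times\mathbb{S}_+\times\mathfrak{L}_+)$ and $H(t,\omega):=\bar{H}(X_t(\omega))$ and copy the proof of Proposition~\ref{prop:DynamicProgramming}: identifying $\mathcal{P}_x(\hat{\Theta})$ with $\mathcal{P}^H_x$ on $\Omega_x$ (and with $\emptyset$ off $\Omega_x$), Proposition~\ref{prop:Hollender4.31} and Theorem~\ref{thm:Hollender4.29} yield that the conditional operators $\hat{\mathcal{E}}^x_\tau$ are upper semianalytic, $\mathcal{F}_\tau^*$-measurable, and satisfy the aggregation identity~\eqref{eq:aggregation_property}, and the Markov property follows from the last assertion of Proposition~\ref{prop:Hollender4.31} exactly as in Corollary~\ref{lem:SubMarkov}. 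I do not expect a genuine obstacle here; the single point that needs a word of care is that $\hat{\alpha}$ is merely continuous and no longer affine because of the positive-part operation $x\mapsto x^+$, so one appeals to continuity of $x\mapsto\hat{\theta}(x)$ directly rather than citing the ``Borel affine map'' observation around~\eqref{eq:theta}. Everything else is a transcription of Section~\ref{sec:Dynamic}.
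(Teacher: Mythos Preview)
Your proposal is correct and follows the same route as the paper: both argue that the proofs of Proposition~\ref{prop:DynamicProgramming} and Corollary~\ref{lem:SubMarkov} only require the graph of $x\mapsto\hat{\Theta}(x)$ to be Borel, establish this by rerunning the separability argument of Lemma~\ref{lem:Borel} (as flagged in Remark~\ref{rem:Borel}), and then transcribe Section~\ref{sec:Dynamic}. You supply more detail than the paper does---in particular you make explicit the continuity of $x\mapsto\hat{\theta}(x)$ and $\theta\mapsto\hat{\theta}(x)$ and note that $\hat{\alpha}$ is merely continuous rather than affine because of $x\mapsto x^+$---but the approach is identical.
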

	\begin{proof}
		By Condition \ref{con:C}, $\Theta\subseteq(\mathbb{R}^d)^{d+1}\times\mathbb{S}^{d+1}\times\mathfrak{L}^{d+1}$ is non-empty and closed.
		Hence, the set 
		\begin{equation} \label{eq:SetBorelFadina}
			\left\{(x,b,a,k)\in\mathbb{R}^d\times\mathbb{R}^d\times\mathbb{S}\times\mathfrak{L}\,:\,(b,a,k)\in\hat{\Theta}(x)\right\}
		\end{equation}
		is Borel.
		This follows by the same arguments as in the proof of Lemma \ref{lem:Borel} since they solely rely on the separability of the space $(\mathbb{R}^d)^{d+1} \times \mathbb{S}^{d+1}\times\mathfrak{L}^{d+1}$, cf. Remark \ref{rem:Borel}. 
		The dynamic programming principle and the Markov property follow as in the proofs of Proposition \ref{prop:DynamicProgramming} and Lemma \ref{lem:SubMarkov}.
	\end{proof}
	\begin{lem}\label{lem:existence-hat}
		Let $\Theta\subseteq (\mathbb{R}^d)^{d+1}\times\mathbb{S}^{d+1}\times\mathfrak{L}^{d+1}$ satisfy Assumption \ref{con:C} and  $\phi\in\textnormal{Lip}_b(\mathbb{R}^d)$. Assume that the value function $\hat{v}$ from \eqref{eq:def-value-function-hat} is continuous. 
		Then $\hat{v}$ is a viscosity solution of \eqref{eq:PIDE-hat} in $\mathbb{R}_+\times\mathbb{R}^d$, and satisfies the initial condition \eqref{eq:PIDE-initial} on $\mathbb{R}^d$.
	\end{lem}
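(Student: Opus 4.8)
The plan is to repeat the argument of Proposition \ref{prop:Existence} almost verbatim, taking advantage of three structural simplifications of the present setting. First, the state space is all of $\mathbb{R}^d$, so every $(t,x)\in\mathbb{R}_+\times\mathbb{R}^d$ is an interior point and all terms in that proof involving $\mathbf{1}_{\{X_u\notin\textnormal{S}\}}$ or $\textnormal{dist}(x,\mathbb{R}^d\setminus\textnormal{S})^{-1}$ simply disappear. Second, the continuity of $\hat v$ is assumed outright, so there is nothing to prove about semicontinuity, and the supersolution property will follow by the same computation with the inequality $\psi\geq\hat v$ replaced by $\psi\leq\hat v$ and all inequalities reversed. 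Third, $\hat\nu(x)\equiv\nu_0$ does not depend on $x$, which shortens several estimates. I would begin by recording that the auxiliary results of Section \ref{sec:PIDE} hold for $\{\mathcal{P}_x(\hat\Theta)\}_{x\in\mathbb{R}^d}$: the dynamic programming principle and Markov property are Lemma \ref{lem:dynamic-hat}, and the moment estimates of Lemma \ref{lem:TimeCont0}, Corollary \ref{cor:TimeCont0}, Lemma \ref{lem:integrability}, together with the time-regularity of $\hat v$ (the analogue of Lemma \ref{lem:ValueF_ts} and the H\"older part of Lemma \ref{lem:JointContinuity}), go through unchanged. Indeed, their linear-growth constants are governed by $\mathcal{K}$, and since $\|x^+\|\leq\|x\|$ one gets from Corollary \ref{cor:lin_bound} that $\|\hat\beta(x)\|,\|\hat\alpha(x)\|\leq\mathcal{K}(\|x\|+1)$, $\|\hat\nu(x)\|=\|\nu_0\|\leq\mathcal{K}$, as well as $\|\hat\beta(x)-\hat\beta(y)\|\leq 3\mathcal{K}\|x-y\|$ and $\|\hat\alpha(x)-\hat\alpha(y)\|\leq 3\mathcal{K}\|x^+-y^+\|\leq 3\mathcal{K}\|x-y\|$.

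Next I would fix $(t,x)\in\mathbb{R}_+\times\mathbb{R}^d$, a test function $\psi\in\textnormal{C}^{2,3}_b(\mathbb{R}_+\times\mathbb{R}^d)$ with $\psi(t,x)=\hat v(t,x)$ and $\psi\geq\hat v$, and $P\in\mathcal{P}_x(\hat\Theta)$, and apply It\^o's formula to $\psi(t-s,X_s)$ exactly as in \eqref{eq:ItoVisc_1}--\eqref{eq:ItoVisc}. The stochastic integral against $M^P$ is a true martingale on $[0,\epsilon]$ by the Burkholder--Davis--Gundy argument of \eqref{ineq:3}. The drift, diffusion and large-jump terms are estimated using the Lipschitz continuity of $\psi,\partial_t\psi,D_x\psi,D_x^2\psi$ and Lemma \ref{lem:TimeCont0}, producing an $O(s^{3/2})$ remainder, while the small-jump term is handled by a second-order Taylor expansion and contributes a remainder $C_{\delta,s}\,s$ with $C_{\delta,s}$ a multiple of $E_P[\sup_{0\leq u\leq s}\hat{\mathcal{K}}_\delta(X_u)]$. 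Here $\hat{\mathcal{K}}_\delta(y)=\sup_{(\beta,\alpha,\nu)\in\Theta}\int_{\|z\|\leq\delta}\|z\|^2\,\nu_0(dz)$ is independent of $y$ and coincides with $\mathcal{K}_\delta(0;\Theta)$, hence $\hat{\mathcal{K}}_\delta\downarrow 0$ as $\delta\downarrow 0$ by Assumption \ref{con:C}. Taking the supremum over $P\in\mathcal{P}_x(\hat\Theta)$, dividing by $s>0$ and using the (hat analogue of the) inequality \eqref{ineq:0} leads to $0\leq \hat C\,s^{1/2}+2\,C_{\delta,s}-\partial_t\psi(t,x)+\hat{\mathcal{E}}^x\big(\sup_{0\leq u\leq s}\hat{\mathcal{A}}_{X_u}\psi(t,x)\big)$.

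It then remains to pass to the limit $s\downarrow 0$, and this is precisely where the present case is genuinely easier than Proposition \ref{prop:Existence}: since $\hat\nu$ is constant, the jump integrals in $\hat{\mathcal{A}}_{X_u}\psi(t,x)$ and $\hat{\mathcal{A}}_x\psi(t,x)$ are identical and cancel, so that $|\hat{\mathcal{A}}_{X_u}\psi(t,x)-\hat{\mathcal{A}}_x\psi(t,x)|\leq 3\mathcal{K}\big(\|D_x\psi\|_\infty+\tfrac d2\|D_x^2\psi\|_\infty\big)\,\|X_u^+-x^+\|\leq \bar C\,\|X_u-x\|$, with no boundary indicators to separate. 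By Corollary \ref{cor:TimeCont0} we get $\hat{\mathcal{E}}^x\big(\sup_{0\leq u\leq s}\|X_u-x\|\big)\leq C_{\mathcal{K},1}(\|x\|+1)(s+s^{1/2})\to 0$, whence $\hat{\mathcal{E}}^x\big(\sup_{0\leq u\leq s}\hat{\mathcal{A}}_{X_u}\psi(t,x)\big)\to\hat{\mathcal{A}}_x\psi(t,x)$. Letting $s\downarrow 0$ and then $\delta\downarrow 0$ in the displayed inequality gives $0\leq -\partial_t\psi(t,x)+\hat{\mathcal{A}}_x\psi(t,x)$, i.e. $\hat v$ is a viscosity subsolution of \eqref{eq:PIDE-hat}; running the same computation with $\psi\leq\hat v$ and reversed inequalities gives the supersolution property, and the initial condition \eqref{eq:PIDE-initial} is immediate from \eqref{eq:def-value-function-hat}.

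The steps above are essentially bookkeeping, so I do not expect a genuine obstacle; the only points demanding care are (i) checking that replacing the $\mathbb{S}_+$- and $\mathfrak{L}_+$-valued parameters by $\mathbb{S}$- and $\mathfrak{L}$-valued ones causes no difficulty, which it does not since the existence argument nowhere uses positive semidefiniteness, and (ii) confirming that the square-root Lipschitz condition \eqref{con:sqrt-alpha} of Theorem \ref{thm:hat} plays no role here, being needed only for the uniqueness statement.
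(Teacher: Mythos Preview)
Your proposal is correct and follows essentially the same approach as the paper: both observe that the auxiliary moment estimates and the entire It\^o computation of Proposition \ref{prop:Existence} carry over verbatim, with the only point requiring fresh work being the estimate $|\hat{\mathcal{A}}_{X_u}\psi(t,x)-\hat{\mathcal{A}}_x\psi(t,x)|\leq\bar C\,\|X_u-x\|$, which holds because $\hat\nu$ is constant and $\|x^+-y^+\|\leq\|x-y\|$. One small slip: your displayed intermediate bound $3\mathcal{K}\big(\|D_x\psi\|_\infty+\tfrac d2\|D_x^2\psi\|_\infty\big)\|X_u^+-x^+\|$ is not quite right, since the drift term contributes $\|(\beta_1,\ldots,\beta_d)(X_u-x)\|$, not $\|(\beta_1,\ldots,\beta_d)(X_u^+-x^+)\|$; the correct combined bound is directly $\leq 3\mathcal{K}\big(\|D_x\psi\|_\infty+\tfrac d2\|D_x^2\psi\|_\infty\big)\|X_u-x\|$, which is what you arrive at anyway.
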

	\begin{proof}
		Note that in the proofs of Corollaries \ref{cor:lin_bound}, \ref{cor:TimeCont0} and Lemmas \ref{lem:TimeCont0}, \ref{lem:integrability} - \ref{lem:JointContinuity}, the specific form of the functions $\beta,\alpha,\nu$ is not used.
		Instead, these results follow from the dynamic programming principle, the sublinear Markov property, and the inequalities in Assumption \ref{con:C}.
		
		Since $\Theta\subseteq (\mathbb{R}^d)^{d+1}\times\mathbb{S}^{d+1}\times\mathfrak{L}^{d+1}$ satisfies Condition \ref{con:C}, we have
		\begin{align}\label{con:U1}
			\lim_{\delta\downarrow 0}\sup_{(\beta,\alpha,\nu)\in\Theta}\int_{\|z\|< \delta} \|z\|^2\,\hat{\nu}(x;dz) &
			\equiv {\lim_{\delta\downarrow 0}\sup_{(\beta,\alpha,\nu)\in\Theta}\int_{\|z\|< \delta} \|z\|^2\,{\nu_0}(dz)} \\
			& { = \lim_{\delta\downarrow 0}\sup_{(\beta,\alpha,\nu)\in\Theta}\int_{\|z\|< \delta} \|z\|^2\,{\nu}(0;dz)
				=}
			\lim_{\delta\downarrow 0} \mathcal{K}_\delta(0)
			=0.
		\end{align}
		Moreover,
		\begin{equation}\label{con:U2}
			\sup_{x\in\mathbb{R}^d} \frac{\| \hat{\beta}(x) \| }{\|x\|+1}
			=\trinorm{\beta} ,
			\qquad
			\sup_{x\in\mathbb{R}^d} \frac{\| \hat{\alpha}(x) \| }{\|x\|+1}
			\leq \trinorm{\alpha},
			\qquad 
			\sup_{x\in\mathbb{R}^d} \frac{\| \hat{\nu}(x) \|}{\|x\|+1} 
			\leq \trinorm{\nu}
		\end{equation}
		for all $(\beta,\alpha,\nu)\in\Theta$.
		Hence,  Corollaries \ref{cor:lin_bound}, \ref{cor:TimeCont0} and Lemmas \ref{lem:TimeCont0}, \ref{lem:integrability} - \ref{lem:JointContinuity} are also valid for the family $\{\hat{\mathcal{E}}^x\}_{x\in\mathbb{R}^d}$ of sublinear expectations and the value function $\hat{v}$.
		
		By carefully examining the arguments in the proof of Proposition \ref{prop:Existence} we can see that the specific form of the functions $\alpha, \beta, \nu$ is only used in \eqref{ineq:A1}.
		Thus, proving that $\hat{v}$ is a viscosity solution of the PIDE \eqref{eq:PIDE-hat} in $\mathbb{R}_+\times\mathbb{R}^d$ boils down to establishing the inequality \eqref{ineq:A1} for the operator $\hat{\mathcal{A}}_x$.
		For this purpose, note that $\|x^+-y^+\|\leq \|x-y\|$ for all $x,y\in\mathbb{R}^d$.
		Hence, we obtain
		\begin{align}
			&| \hat{\mathcal{A}}_{X_u}\psi(t,x) - \hat{\mathcal{A}}_{x}\psi(t,x) |\nonumber\\
			&\leq 
			\sup_{(\beta,\alpha,\nu)\in\Theta} \Bigg\{ \Big| D_{x}\psi(t,x)^T\, (\beta_1,\ldots,\beta_d)\,(X_u-x)\Big| +\frac{1}{2}\,\left|\tr\Big[D_{x}^2\psi(t,x)\,(\alpha_1,\ldots,\alpha_d)\,(X_u^+-x^+)\Big]\right| \Bigg\} \nonumber \\
			& \leq  
			3\,\mathcal{K}\, \Big(\|D_x\psi\|_\infty+\frac{d}{2}\,\|D_{x}^2\psi\|_\infty\Big)\, \|X_u-x\|,
		\end{align}
		where we used Assumption \eqref {con:C} 
		.
	\end{proof}
	\begin{proof}[Proof of Theorem \ref{thm:hat}]
		Due to Lemmas \ref{lem:dynamic-hat} and \ref{lem:existence-hat}, we are only left with proving the uniqueness of the viscosity solution $\hat{v}$.
		This in turn follows from Assumption \ref{con:HJB} together with the comparison result for Hamilton-Jacobi-Bellman equations from \cite{hollender_levy-type_2016}, see the Appendix.
		
		The PIDE \eqref{eq:PIDE-hat} clearly has the form as in \eqref{eq:AppendixPIDEViscosity} since $\hat{\alpha}(x)\in\mathbb{S}_+$ and its unique square root exists.
		For the boundedness condition on the local part, observe that for every $x\in\mathbb{R}^d$,
		\begin{align}
			\sup_{(\beta,\alpha,\nu)\in\Theta} \bigg\{ \|\hat{\beta}(x)\| + \left\| \sqrt{\hat{\alpha}(x)}\right\| \bigg\} 
			& \leq \sup_{(\beta,\alpha,\nu)\in\Theta} \Big\{ \|\hat{\beta}(x)\| + {\| \hat{\alpha}(x)\|} + 1 \Big\} 
			\nonumber \\
			& 
			\leq 
			\mathcal{K}\,(\|x\|+1) + 1
			<\infty
		\end{align}
		due to \eqref{con:U2} and Assumption \ref{con:C}.
		Analogously, the non-local part is bounded
		\begin{equation}
			\sup_{(\beta,\alpha,\nu)\in\Theta} \int_{\mathbb{R}^d} \|z\|^2\wedge \|z\|\;\hat{\nu}(x;dz) \equiv  \sup_{(\beta,\alpha,\nu)\in\Theta} \int_{\mathbb{R}^d} \|z\|^2\wedge \|z\|\; \nu(0,dz)
			\leq \mathcal{K} < \infty
		\end{equation}
		by \eqref{eq:UniquenessCharacteristicThirdComponent}.
		The tightness condition follows from \eqref{con:U1}.
		By Corollary \ref{cor:lin_bound} and \eqref{con:sqrt-alpha},
		\begin{equation}
			\sup_{(\alpha, \beta, \nu_0) \in \Theta} \Bigg\{\left\| \hat{\beta}(x) - \hat{\beta}(y)\right\| + \left\| \sqrt{\hat{\alpha}(x)} -\sqrt{\hat{\alpha}(y)} \right\| \Bigg\}
			\leq \left(3\,\mathcal{K}+\hat{L}\right)\,\|x-y\|
		\end{equation}
		for all $x,y\in\mathbb{R}^d$, which establishes the continuity condition.
		
		Applying Corollary \ref{cor:ComparisonPrinciple} immediately yields that $\hat{v}$ is the unique viscosity solution of \eqref{eq:PIDE-hat} in $\mathbb{R}_+\times\mathbb{R}^d$ such that the initial condition \eqref{eq:PIDE-initial} holds on $\mathbb{R}^d$.
	\end{proof}
	
	\begin{rem}
		Note that although the results in Sections \ref{sec:Dynamic} and \ref{sec:PIDE} do not depend on the specific form of the affine functions $\beta, \alpha, \nu$
		, the results in Lemmas \ref{lem:jump-exit}, \ref{lem:tau-infinite} and Corollaries \ref{cor:constant-outside}, \ref{cor:trivial_uncertainty_subset} in Section \ref{sec:setup} do.
		By choosing the state space $\textnormal{S}=\mathbb{R}^d$ for the non-linear affine process with uncertainty subsets $\{\mathcal{P}_x(\hat{\Theta})\}_{x\in\textnormal{S}}$, we avoid further technicalities.
	\end{rem}
	
	\delete{\begin{proof}
			\delete{To prove the $\tilde{v}$ is a subsolution of \eqref{eq:PIDEFadina1} we follow the lines of the proof of Proposition \ref{prop:Existence}.
				Thus, to prove that $\tilde{v}$ is a subsolution of \eqref{eq:PIDEFadina1}-\eqref{eq:PIDEFadina2}, we solely need to derive a similar estimation for the operator $\tilde{\mathcal{A}}_y v(t,x)$ defined in \eqref{eq:OperatorPIDEFadina}. For $(t,x) \in \mathbb{R}_+ \times \mathbb{R}, 0 \leq u \leq s \leq t$ and $\psi \in C_b^{2,3}(\mathbb{R}_+ \times \mathbb{R}, \mathbb{R})$ with $\psi(t,x)=\tilde{v}(t,x)$ and $\psi \geq \tilde{v}$ on $\mathbb{R}_+ \times \mathbb{R}$ we get			
				\begin{align}
					| \tilde{\mathcal{A}}_{X_u}\psi(t,x) - \tilde{\mathcal{A}}_{x}\psi(t,x) | 
					&\leq \Bigg|\sup_{(\beta,\alpha,\nu)\in\tilde{\Theta}}\Bigg\{\partial_x\psi(t,x)^T\, \beta_1(X_u-x)+\frac{1}{2}\,D_{x}^2\psi(t,x)\alpha_1(X_u^+-x^+)\Bigg\} \Bigg|\nonumber\\
					& \leq 3 \mathcal{K} \| \partial_x \psi\|_{\infty} |X_u - x | + 3 \mathcal{K} \frac{1}{2}\|\partial_x^2 \psi \|_{\infty}\|X_u^+ - x^+ \| \label{eq:ExistenceFadina1}\\
					& \leq \underbrace{3  \mathcal{K} \left( \| \partial_x \psi\|_{\infty}  +  \frac{1}{2}\|\partial_x^2 \psi \|_{\infty} \right)}_{:=\tilde{\tilde{C}}}|X_u - x |,  \label{eq:ExistenceFadina2}
				\end{align}
				where we use in \eqref{eq:ExistenceFadina2} that $|X_u^+ - x^+ |\leq |X_u - x |$. \expl{Then we have
					\begin{align*}
						\vert X_u^+ - x^+ \vert &= \bigg \vert \frac{\vert X_u \vert + X_u}{2} - \frac{\vert x \vert + x}{2} \bigg \vert \\
						& \leq \bigg \vert \frac{\vert X_u \vert - \vert x \vert }{2}  \bigg \vert + \bigg \vert \frac{X_u-x}{2} \bigg \vert\\
						& \leq \frac{\vert X_u-x \vert}{2} + \frac{\vert X_u-x \vert}{2} \\
						&=\vert X_u-x \vert. 
				\end{align*}} 
				By using \eqref{ineq:A0} and \eqref{eq:ExistenceFadina2} it follows
				\begin{align*}
					\Big \vert \tilde{\mathcal{E}}^x\left( \sup_{0 \leq u \leq s} \tilde{\mathcal{A}}_{X_u} \psi (t,x) \right) - \tilde{\mathcal{A}}_{x} \psi (t,x)  \Big \vert  &\leq \tilde{\mathcal{E}}^x\left( \sup_{0 \leq u \leq s} \vert \tilde{\mathcal{A}}_{X_u} \psi (t,x) - \tilde{\mathcal{A}}_{x} \psi (t,x)\vert \right)\\
					& \quad \leq \tilde{\tilde{C}}\tilde{\mathcal{E}}^x\left( \sup_{0 \leq u \leq s}   |X_u - x | \right) \\
					& \quad \leq \tilde{\tilde{C}} \mathcal{C}_{\mathcal{K},1}(1+ | x |)(s+s^{\frac{1}{2}}),
				\end{align*}
				where the last inequality follows by Lemma \ref{lem:TimeCont0}. Thus, we can conclude with the same argument as in the proof of Proposition \ref{prop:Existence} that 
				\begin{equation*}
					\lim_{s\downarrow0}\tilde{\mathcal{E}}^x\left(\sup_{0\leq u\leq s}\tilde{\mathcal{A}}_{X_u}\psi(t,x)\right)=\tilde{\mathcal{A}}_x\psi(t,x).
				\end{equation*}
				Note that we used here that we consider as state space $\mathbb{R}$.
				The supersolution property of $\tilde{v}$ follows analogously. 
				By the definition in \eqref{eq:ValueFunctionFadina} it is clear that $\tilde{v}$ is bounded by $\| \varphi \|_{\infty}$.} Thus, to prove the uniqueness we apply the comparison principle given in Corollary 2.34 in \cite{hollender_levy-type_2016} with $p=0,d=1$, which is also recalled in the Appendix, see Corollary \ref{cor:ComparisonPrinciple}. First note that the PIDE in \eqref{eq:PIDEFadina1} is a HJB equation in the sense of Definition \ref{def:HJB} with $T= \infty$ such that for $0 < \kappa < 1$
			\begin{equation*}
				G^{\kappa}_{\alpha, \beta, \nu}:(0,\infty) \times \mathbb{R} \times \mathbb{R} \times \mathbb{R} \times \text{SC}_b(\mathbb{R}) \times C^2(\mathbb{R}) \to \mathbb{R}
			\end{equation*} 
			is given by
			\begin{equation*}
				G_{\alpha,\beta, \nu}^{\kappa}(t,x,p,X,u,\phi)=- \mathcal{L}_{\alpha,\beta, \nu}(t,x,p,X)- \mathcal{I}_{\alpha,\beta, \nu}^{\kappa}(t,x,u, \phi)
			\end{equation*}	
			with
			\begin{align}
				\mathcal{L}_{\alpha,\beta, \nu}(x,p,X)&=\tilde{\beta}(x)p+\frac{1}{2} \tilde{\alpha}(x)X =\tilde{\beta}(x)p+\frac{1}{2} \sigma_{\alpha}(X)^2 \label{eq:RewrittenSquareroot}\\
				\mathcal{I}_{\alpha,\beta, \nu}^{\kappa}(t,x,u,\phi)&=\check{\mathcal{I}}_{\alpha,\beta, \nu}^{\kappa}(t,x,\phi)+ \overline{\mathcal{I}}_{\alpha,\beta, \nu}^{\kappa}(t,x,u,\phi)+ \hat{\mathcal{I}}_{\alpha,\beta, \nu}^{\kappa}(t,x,u), \nonumber
			\end{align}
			and 
			\begin{align*}
				\check{\mathcal{I}}_{\alpha, \beta, \nu}^{\kappa}(t,x,\phi)&= \int_{\vert z \vert \leq \kappa} (\phi(x+z)-\phi(x)-D \phi(x)z)\nu_0 (dz) \\
				\overline{\mathcal{I}}_{\alpha, \beta, \nu}^{\kappa}(t,x,u,\phi)&=\int_{\kappa < \vert z \vert \leq 1} (u(x+z)-u(x)-D \phi(x)z)\nu_0 (dz) \\
				\hat{\mathcal{I}}_{\alpha,\beta, \nu}^{\kappa}(t,x,u)&= \int_{1 < \vert z \vert } (u(x+z)-u(x))\nu_0(dz).
			\end{align*}
			Here, the functiton $\sigma_{\alpha}$ in \eqref{eq:RewrittenSquareroot} is defined in Condition \ref{con:PFadina}.
			We now verify that the assumptions in Assumption \ref{con:HJB} are satisfied. We start with the boundedness condition. 
			For fixed $(t,x) \in (0,\infty ) \times \mathbb{R}$ we have		\begin{align*}
				\sup_{(\alpha,\beta, \nu_0) \in \tilde{\Theta}} \left( \vert \tilde{\beta}(x) \vert + \vert \frac{1}{\sqrt{2}}\sigma_{\alpha}(x)\vert \right)< \infty
			\end{align*}
			by the definition of the functions $\tilde{\beta}, \tilde{\alpha}$ and Assumption \ref{con:C}.
			Moreover, by Assumption \ref{con:C} it follows
			\begin{equation*}
				\sup_{(\alpha,\beta, \nu_0) \in \tilde{\Theta}}\int \left( \vert z \vert^2 \textbf{1}_{\vert z \vert \leq 1} +  \vert z \vert^0 \textbf{1}_{\vert z \vert >  1}\right) \nu_0(dz) \leq \sup_{(\alpha,\beta, \nu_0) \in \tilde{\Theta}}\int \vert z \vert^2 \nu_0(dz) < \infty.
			\end{equation*}
			Clearly, Assumption \ref{con:C} also implies that the tightness condition holds. By Condition \ref{con:PFadina} it follows that $\sigma_{\alpha}$ is Lipschitz continuous. Moreover, as $\tilde{\beta}$ is obviously Lipschitz continuous it follows, that there exist a constant $C$ such that for all $x,y \in \mathbb{R}$
			\begin{align*}
				\sup_{(\alpha, \beta, \nu_0) \in \tilde{\Theta}} \left( \frac{1}{\sqrt{2}} \vert \sigma_{\alpha}(x)-\sigma_{\alpha}(y)\vert + \vert \tilde{\beta}(x)- \tilde{\beta}(y) \vert \right)< C \vert x-y\vert. 
			\end{align*} 
			As $j(z)=z$, the continuity assumption regarding $z$, as well as the growth condition, are satisfied. In addition, as the function $c \equiv 0$ the monotonicity condition obviously holds. Therefore, by Corollary \ref{cor:ComparisonPrinciple} it follows that $\tilde{v}$ is the unique viscosity solution of \eqref{eq:PIDEFadina1}-\eqref{eq:PIDEFadina2}.
	\end{proof}}
    
    \section{Examples}\label{sec:examples}
	\delete{	\subsection{Parameter Sets of admissible Parameters}\label{sec:admissible-parameters}}
	In the following, we give an example of a non-linear affine process.
	For the sake of simplicity, we consider one-dimensional processes with canonical state space $\textnormal{S}=\mathbb{R}$ or $\textnormal{S}=\mathbb{R}_+$.
	\delete{Recall that we have constructed non-linear affine processes by allowing the differential characteristics to evolve in $\Theta(X)$.
		This construction admits a straightforward interpretation and application.
		If we assume a stochastic phenomenon (e.g. short rates, volatility or credit default) is described by an affine process and we try to estimate its parameters, then the estimators are fraught with statistical uncertainty
		In order to account for this uncertainty, we may consider confidence intervals for each interval instead of one single estimator.
		The thus obtained model is much more robust than the classical model with fixed parameters.
		From this statistical point of view, it makes sense to define $\Theta\subseteq\mathbb{R}^2\times\mathbb{R}^2\times\mathfrak{L}^2$ such that each parameter evolves in some interval (or a convex, compact set in the case of Lévy measures).}
	
	\delete{In this section, we apply that approach to three broad classes of affine processes before turning to parameter sets $\Theta$ with non-admissible parameters in Section \ref{sec:non-admissible-parameters}.
		We start with two classes of affine diffusion processes, the first of which can be found in a slightly different presentation in \cite{fadina_affine_2019}, too.}
	We focus here on a class of pure jump processes with admissible parameters.
	For affine diffusion processes we refer to \cite{fadina_affine_2019} and the examples therein.
	\delete{The third class contains pure jump processes, and is thus a noticeable contribution of this paper.}
	\delete{As continuous image of a compact set, all presented choices of $\Theta$ are closed and non-empty by construction.
		In fact, by applying suitable transformations, $\Theta$ can written as the Cartesian product of closed intervals and convex, compact sets of Lévy measures.
		Thus, the dynamic programming principle holds and the associated uncertainty sets $\{\mathcal{P}_x(\Theta)\}_{x\in\textnormal{S}}$ define a non-linear affine process.
		Further note that for each particular choice of $\Theta$ in this section, the uncertainty sets are non-empty, i.e., $\mathcal{P}_x(\Theta)\neq\emptyset$ for all $x\in\mathbb{R}$.
		As explained in Remark \ref{rem:non-empty}, this follows from \cite[Theorem 2.7]{duffie_affine_2003}, since each $\Theta$ contains admissible parameters in the sense of \cite[Definition 2.6]{duffie_affine_2003}.}
	For the sake of convenience, we state here the definition of an admissible parameter for $d=1$, cf. \cite[Definition 2.6]{duffie_affine_2003}, \cite[pp. 19 - 21]{keller-ressel_affine_2008}. 
	
	\begin{defn}
		A parameter $(\beta_0,\beta_1,\alpha_0,\alpha_1,\nu_0,\nu_1)\in\mathbb{R}^2\times\mathbb{R}^2\times\mathfrak{L}^2$ is called \emph{admissible} for the state space $\textnormal{S}=\mathbb{R}_+$ if
		\begin{equation}
			\beta_0\in\mathbb{R}_+,\; \beta_1\in\mathbb{R},\; \alpha_0=0,\; \alpha_1\in\mathbb{R}_+,\;  \nu_0, \nu_1\in\mathfrak{L}_+ \text{ with}\nonumber
		\end{equation}
		\begin{equation}
			\text{supp}(\nu_0), \text{supp}(\nu_1)\subseteq\mathbb{R}_+,\;\int_0^\infty z\wedge 1\,(\nu_0(dz)+\nu_1(dz))<\infty; 
		\end{equation}
		and for the state space $\textnormal{S}=\mathbb{R}$ if
		\begin{equation}
			\beta_0\in\mathbb{R},\;\beta_1\in\mathbb{R},\; \alpha_0\in\mathbb{R}_+,\; \alpha_1=0,\; \nu_0\in\mathfrak{L}_+,\; \nu_1=0.
		\end{equation}
	\end{defn}		
	
	\delete{	\subsection{Non-linear Gaussian Ornstein-Uhlenbeck Process}\label{subs:GOU}
		The classical Gaussian Ornstein-Uhlenbeck process has state space $\textnormal{S}=\mathbb{R}$ and can be defined as the unique strong solution of the SDE
		\begin{equation}
			dX_t = \gamma\left(m -X_t\right)\,dt + \sigma\,dW_t,\qquad X_0=x,
		\end{equation}
		where $\gamma$, $m$, and $\sigma\geq 0$ are some constants and $W=(W_t)_{t\geq 0}$ is some standard Brownian motion, cf. \cite[p. 422]{andersen_handbook_2009}.
		The uniqueness and existence of the solution follow from the usual linear growth and Lipschitz conditions, cf. \cite[Theorem 5.2.1]{oksendal_stochastic_1995}.}
	
	\delete{This class of processes is very flexible and has found numerous applications, e.g. the Vasi\v{c}ek short rate model, cf. \cite{vasicek_equilibrium_1977}.
		Clearly, the Gaussian Ornstein-Uhlenbeck process is a continuous affine process with parameters $\beta_0=\gamma\,m$, $\beta_1=-\gamma$ and $\alpha_0=\sigma^2$ (all other parameters are zero).
		Fix some constants $\underline{\gamma}\leq\overline{\gamma}$, $\underline{m}\leq\overline{m}$ and $0\leq\underline{\sigma}\leq\overline{\sigma}$, and set
		\begin{align}
			\Theta&
			:=\Big\{ (\gamma\,m,-\gamma,\sigma^2,0,0,0)\in\mathbb{R}^2\times\mathbb{R}^2\times\mathfrak{L}^2\,:\,\gamma\in[\underline{\gamma},\overline{\gamma}],\, m\in[\underline{m},\overline{m}],\,\sigma\in[\underline{\sigma},\overline{\sigma}] \Big\}\nonumber\\
			&=\Big[\underline{\beta_0},\overline{\beta_0}\Big]\times\Big[-\overline{\gamma},-\underline{\gamma}\Big]\times\Big[\underline{\sigma}^2,\overline{\sigma}^2\Big]\times\{0\}\times\{0\}\times\{0\},
		\end{align}
		where $\underline{\beta_0}:=\min\{ \underline{\gamma}\underline{m},\underline{\gamma}\,\overline{m},\overline{\gamma}\,\underline{m},\overline{\gamma}\,\overline{m}\}$ and $\overline{\beta_0}:=\max\{ \underline{\gamma}\underline{m},\underline{\gamma}\,\overline{m},\overline{\gamma}\,\underline{m},\overline{\gamma}\,\overline{m}\}$.
		We call $X$ together with the associated uncertainty sets $\{\mathcal{P}_x(\Theta)\}_{x\in\textnormal{S}}$ {non-linear Gaussian Ornstein-Uhlenbeck process} with state space $\textnormal{S}=\mathbb{R}$.}
	
	\delete{\subsection{Non-linear Squared Bessel Processes}
		The classical squared Bessel process has state space $\textnormal{S}=\mathbb{R}_+$ and can be defined as the unique strong solution of the SDE
		\begin{equation}\label{eq:SDE-BES2}
			dX_t = \delta\,dt + 2\sqrt{X_t}\,dW_t,\qquad X_0=x,
		\end{equation}
		where $W=(W_t)_{t\geq 0}$ is some standard Brownian motion, $\delta\geq 0$ is some constant and $x\in\textnormal{S}$, cf. \cite[p. 2]{shekhar_complex_2020}.
		The uniqueness and existence of the strong solution follow from the local $\frac{1}{2}$-Hölder continuity of $x\mapsto\sqrt{x}$ and the Yamada-Watanabe theorem, cf. \cite[Theorem 1]{yamada_uniqueness_1971} or \cite[Chapter XI]{revuz_continuous_1991}. }
	
	\delete{It is a continuous affine process with parameters $\beta_0=\delta$, $\alpha_1=4$ (all other parameters are zero).
		Now, fix some constants $0\leq\underline{\delta}\leq\overline{\delta}$ and set
		\begin{align}
			\Theta:=\Big\{(\delta,0,0,4,0,0)\in\mathbb{R}^2\times\mathbb{R}^2\times\mathfrak{L}^2\,:\,\delta\in[\underline{\delta},\overline{\delta}]\Big\}=[\underline{\delta},\overline{\delta}]\times\{0\}\times\{0\}\times\{4\}\times\{0\}\times\{0\}.
		\end{align}
		We call $X$ together with the associated uncertainty sets $\{\mathcal{P}_x(\Theta)\}_{x\in\textnormal{S}}$ {non-linear squared Bessel process} with state space $\textnormal{S}=\mathbb{R}_+$.}
	
	Consider a compound Poisson process given by
	\begin{equation}
		X_t:=\sum_{n=1}^{N_t}Y_n,
	\end{equation}
	where $(Y_n)_{n\in\mathbb{N}}$ is a sequence of i.i.d. random variables with law $\nu$, and $N=(N_t)_{t\geq 0}$ is a Poisson process with intensity $\lambda >0$ independent of $(Y_n)_{n\in\mathbb{N}}$, cf. \cite[p. 449]{andersen_handbook_2009}.
	For simplicity, assume that $\nu\in\mathfrak{L}_+$, i.e., $\nu(\{0\})=0$ and $\int ( z^2\wedge 1)\,d\nu(dz)<\infty$.
	Fix a truncation function $h:\mathbb{R}\rightarrow\mathbb{R}$.
	Then $X$ admits the canonical decomposition
	\begin{equation}
		X_t = \underbrace{\lambda\,t\int_{\mathbb{R}} h(z)\,\nu(dz)}_{=B_t} + \underbrace{\sum_{n=1}^{N_t} h(Y_n)-\lambda\,t \int_{\mathbb{R}} h(z)\,\nu(dz)}_{=\,^j\!M_t=M_t} + \underbrace{\sum_{n=1}^{N_t} \left(Y_n - h(Y_n)\right)}_{=J_t}.
	\end{equation}
	That is, $X$ is a Lévy process and thus an affine process. 
	All parameters are zero except for $\beta_0=\lambda\int h(z)\,\nu(dz)$ and $\nu_0=\lambda\,\nu$.
	If $\text{supp}(\nu)\subseteq\mathbb{R}_+$ and $\int_0^\infty ( z \wedge 1 )\,\nu(dz)<\infty$, the state space of $X$ is $\textnormal{S}=\mathbb{R}_+$, or $\textnormal{S}=\mathbb{R}$ otherwise.
	Now, fix some constants $0\leq \underline{\lambda}\leq\overline{\lambda}$ and a convex, compact set $\emptyset\neq L\subseteq\mathfrak{L}_+$.
	Set
	\begin{align}\label{eq:theta-compound-poisson}
		\Theta &:= \left\{ \left(\lambda\, \int_{\mathbb{R}} h(z)\,\nu(dz),0,0,0,\lambda\,\nu,0 \right)\in\mathbb{R}^2\times\mathbb{R}^2\times\mathfrak{L}^2\,:\,\lambda\in[\underline{\lambda},\overline{\lambda}],\;\nu\in L\right\}\nonumber\\
		&=\left[\underline{\lambda}\,\min_{\nu\in L}\int_{\mathbb{R}}h(z)\,\nu(dz),\overline{\lambda}\,\max_{\nu\in L}\int_{\mathbb{R}}h(z)\,\nu(dz)\right]\times\{0\}\times\{0\}\times\{0\}\times L^{\lambda}\times\{0\},
	\end{align}
	where $L^\lambda:=\{\lambda\,\nu\,:\,\lambda\in[\underline{\lambda},\overline{\lambda}],\;\nu\in L\}$, which is convex and compact.
	Since $L$ is compact, the maximum and minimum in \eqref{eq:theta-compound-poisson} are attained, and all intermediate values can be obtained choosing suitable $\lambda\,\nu\in L^\lambda$ due to its convexity.
	As in the classical setting, we choose $\textnormal{S}=\mathbb{R}_+$ if $\text{supp}(\nu)\subseteq\mathbb{R}_+$ and $\int_0^\infty \left( z\wedge 1 \right)\,\nu(dz)<\infty$ for all $\nu\in L$, or $\textnormal{S}=\mathbb{R}$ otherwise.
	We call $X$ with associated uncertainty sets $\{\mathcal{P}_x(\Theta)\}_{x\in\textnormal{S}}$ a \emph{non-linear compound Poisson process} with state space $\textnormal{S}$.
	
	Note that the linear parameters of this non-linear affine processes are zero, i.e., it is a non-linear Lévy process as in \cite{neufeld_nonlinear_2016}.
	In the classical setting, we can generalise the compound Poisson process to an affine pure jump process by replacing the constant intensity $\lambda$ by an affine intensity $\lambda(x):=\lambda_0+\lambda_1\,x>0$, cf. \cite[p. 1349]{duffie_transform_2000}.
	That is, for the state space $\textnormal{S}=\mathbb{R}_+$, we can consider
	\begin{align} \label{eq:theta-generalised-compound-poisson}
		\Theta&:=\Bigg\{ \left(\lambda_0\,t\int_{\mathbb{R}} h(z)\,\nu(dz), \lambda_1\,t\int_{\mathbb{R}} h(z)\,\nu(dz),0,0,\lambda_0\,\nu,\lambda_1\,\nu \right)\in\mathbb{R}^2\times\mathbb{R}^2\times\mathfrak{L}^2\,:\,\nonumber\\ 
		&\qquad\qquad\qquad\qquad\qquad\qquad\qquad\qquad\qquad\qquad\quad
		\lambda_0\in[\underline{\lambda_0},\overline{\lambda_0}], \lambda_1\in [\underline{\lambda_1},\overline{\lambda_1}],\;\nu\in L\Bigg\},
	\end{align}
	where $0\leq \underline{\lambda_0}\leq\overline{\lambda_0}$, $0\leq\underline{\lambda_1}\leq\overline{\lambda_1}$ are some constants, and $\emptyset\neq L\subseteq\mathfrak{L}_+$ is convex and compact with $\text{supp}(\nu)\subseteq\mathbb{R}_+$ and $\int_0^\infty \left(z\wedge 1\right)\,\nu(dz)<\infty$ for all $\nu\in L$.
	We call $X$ with associated uncertainty sets $\{\mathcal{P}_x(\Theta)\}_{x\in\textnormal{S}}$ a \emph{non-linear generalised compound Poisson process} with state space $\textnormal{S}=\mathbb{R}_+$.
	
	Clearly, $\Theta$ in \eqref{eq:theta-compound-poisson}, respectively in \eqref{eq:theta-generalised-compound-poisson}, is closed and non-empty, such that the dynamic programing principle holds.
	Further, note that for $\Theta$ in \eqref{eq:theta-compound-poisson}, respectively \eqref{eq:theta-generalised-compound-poisson}, the uncertainty sets are non-empty, i.e., $\mathcal{P}_x(\Theta)\neq\emptyset$ for all $x\in\mathbb{R}$ due to the admissibility of the parameters, cf. Remark \ref{rem:non-empty}. 
	Moreover, it is obvious that for the non-linear compound Poisson process the definition of $\Theta(x)$ in \eqref{eq:Theta} and the one of $\hat{\Theta}(x)$ in \eqref{eq:definitionHatTheta} coincide.
	Thus, by Theorem \ref{thm:hat} the value function $v(t,x):=\mathcal{E}^x(\phi(X_t))$ is the unique viscosity solution of the corresponding PIDE \eqref{eq:PIDE}-\eqref{eq:PIDE-initial}.
	Unfortunately, for the non-linear generalised compound Poisson process we only get an existence result for the solution of the associated PIDE.
	
	\delete{In the light of the interpretation we have given at the beginning of Chapter \ref{ch:Examples}, it seems natural to choose the state space $\textnormal{S}$ and the parameter set $\Theta$ such that every $\theta\in\Theta$ is admissible for the state space $\textnormal{S}$.}
	\begin{rem}
		Note that in Sections \ref{sec:setup} - \ref{sec:PIDE} we have not required $\Theta$ to be a set of admissible parameters but allowed it to be an arbitrary non-empty, closed subset of $(\mathbb{R}^d)^{d+1}\times\mathbb{S}^{d+1}\times\mathfrak{L}^{d+1}$.
		In fact, it is possible to choose $\textnormal{S}$ and $\Theta$ such that no $\theta\in\Theta$ is admissible for the state space $\textnormal{S}$ but $\mathcal{P}_x(\Theta)$ is non-empty for all $x\in\mathbb{R}^d$.
		This is due to our definition of $\mathcal{P}_x(\Theta)$ in \eqref{eq:NLAJD}.
		More precisely, \delete{this is due to the fact that} allow the differential characteristics to evolve in $\Theta(X)$, {i.e.,} we ask for
		\begin{equation}
			(b^P,a^P,k^P)\in\Theta(X)\quad dt\otimes dP\text{-a.e.}
		\end{equation}
		rather than to require that 
		\begin{equation} \label{eq:DifferentAssumption}
			\exists \theta\in\Theta:\,\quad (b^P,a^P,k^P)=\theta(X)\; dt\otimes dP\text{-a.e.}
		\end{equation}
		Assumption \eqref{eq:DifferentAssumption} corresponds to parameter uncertainty in the narrow sense and to the set $\check{\mathcal{P}}_x(\Theta)$ as defined in \eqref{eq:parameter-uncertainty}.
		The set $\check{\mathcal{P}}_x(\Theta)$ being non-empty is equivalent to $\Theta$ containing some admissible parameters.
		More precisely, \cite[Theorem 2.7]{duffie_affine_2003} yields that for every $P\in\check{\mathcal{P}}_x(\Theta)$ the process $X$ is affine with admissible parameters.
		On the contrary, if $\theta\in\Theta$ is admissible, then there exists a $P\in\check{\mathcal{P}}_x(\Theta)$ such that $X$ is affine with parameters $\theta$. This approach is especially suitable to take into account statistical uncertainty for parameter estimation of a stochastic phenomenon (e.g. short rates, volatility or credit default) by considering confidence intervals instead of one single estimator. On the other hand, our construction is more general and allows for more flexibility.
	\end{rem}

	\appendix
	\section{Appendix} \label{appendix}
	
	In this Appendix we summarize some important results we used in the previous sections. First, we state the Burkholder-Davis-Gundy inequality for $\mathbb{R}^d$-valued  càdlàg local martingales. Second, we summarize some properties of truncation functions and conclude with a comparison result for PIDEs. 

    \begin{lem}\label{lem:BDG}
    	Let $p\geq 1$.
    	There exists a constant $0< C_{p,d}<\infty$ such that for all $\mathbb{R}^d$-valued càdlàg local martingales $M$ and $t\geq 0$,
    	\begin{equation}\label{eq:BDG}
    		E\left[\sup_{0\leq s\leq t}\|M_s-M_0\|^p\right]\leq C_{p,d}\,E\left[\,\|[M]_t\|^{\frac{p}{2}}\,\right].
    	\end{equation}
    \end{lem}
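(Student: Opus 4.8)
The plan is to reduce the vector-valued inequality to the classical scalar Burkholder--Davis--Gundy inequality applied coordinatewise, and then to control each scalar quadratic variation by the spectral norm of the matrix-valued bracket $[M]$. Without loss of generality we may assume $M_0=0$, replacing $M$ by $M-M_0$, which changes neither side. Recall that $[M]$ denotes the $\mathbb{S}$-valued process with entries $[M]^{i,j}=[M^i,M^j]$, and that $[M]_t$ is almost surely symmetric and non-negative definite, so $\|[M]_t\|$ is its largest eigenvalue.

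First I would compare the norms on $\mathbb{R}^d$: for every $x\in\mathbb{R}^d$ one has $\|x\|=\big(\sum_{i=1}^d (x^i)^2\big)^{1/2}\leq\sum_{i=1}^d|x^i|$, and hence, by convexity of $y\mapsto y^p$ for $p\geq 1$ (power-mean / Jensen), $\|x\|^p\leq d^{p-1}\sum_{i=1}^d|x^i|^p$. Applying this with $x=M_s$ and taking the supremum over $s\in[0,t]$ gives
\[
\sup_{0\leq s\leq t}\|M_s\|^p\;\leq\;d^{p-1}\sum_{i=1}^d\sup_{0\leq s\leq t}|M^i_s|^p .
\]
Each coordinate $M^i$ is a scalar càdlàg local martingale with $M^i_0=0$, so the classical Burkholder--Davis--Gundy inequality (valid for all $p\geq 1$, with a constant $c_p$ depending only on $p$; for local martingales this is obtained via a localizing sequence $\tau_n\uparrow\infty$ and monotone convergence, which is already subsumed in the standard statement) yields $E\big[\sup_{0\leq s\leq t}|M^i_s|^p\big]\leq c_p\,E\big[[M^i]_t^{p/2}\big]$ for each $i$.

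Next I would bound $[M^i]_t=[M]^{i,i}_t$ by $\|[M]_t\|$: for a non-negative definite symmetric matrix $A$ one has $A^{i,i}=e_i^T A\,e_i\leq\lambda_{\max}(A)=\|A\|$, so pathwise $[M^i]_t\leq\|[M]_t\|$ and therefore $[M^i]_t^{p/2}\leq\|[M]_t\|^{p/2}$. Combining the three estimates,
\[
E\Big[\sup_{0\leq s\leq t}\|M_s\|^p\Big]\;\leq\;d^{p-1}\sum_{i=1}^d c_p\,E\big[\|[M]_t\|^{p/2}\big]\;=\;d^p\,c_p\,E\big[\|[M]_t\|^{p/2}\big],
\]
so the claim holds with $C_{p,d}:=d^p c_p$, which depends only on $p$ and $d$.

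I do not expect any serious obstacle: the only points requiring mild care are the invocation of the scalar BDG inequality with its implicit localization, and the elementary linear-algebra fact that a diagonal entry of a positive semidefinite matrix is dominated by its spectral norm. One could equally bound $[M^i]_t\leq\tr([M]_t)\leq d\,\|[M]_t\|$, which only affects the value of the constant.
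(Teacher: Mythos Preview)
Your proof is correct and follows essentially the same route as the paper: reduce to the scalar Burkholder--Davis--Gundy inequality coordinatewise via $\|x\|^p\leq d^{p-1}\sum_i|x^i|^p$, then bound each $[M^i]_t=e_i^T[M]_t\,e_i$ by the spectral norm $\|[M]_t\|$, arriving at the same constant $C_{p,d}=d^p c_p$. The only cosmetic difference is that the paper passes through $\max_i$ before bounding by $\|[M]_t\|$, which is immaterial.
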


    \begin{proof}
    	This follows from the Burkholder-Davis-Gundy inequality for real-valued local martingales, cf. e.g. \cite[Theorem IV.48]{protter_stochastic_2004}.
    	Let $C_p$ denote the constant for the one-dimensional case, and $M=(M^1,\ldots,M^d)^T$ be an $\mathbb{R}^d$-valued càdlàg local martingale.
    	By the convexity of $x\mapsto x^p$ on $\mathbb{R}_+$ and the triangle inequality, we obtain that
    	\begin{align}
    		E\left[\sup_{0\leq s\leq t} \|M_s\|^p \right] 
    		&\leq E\left[\left(\sum_{i=1}^d\sup_{0\leq s\leq t}|M_s^i|\right)^p\,\right]\nonumber\\ 
    		&\leq d^{p-1}\,\sum_{i=1}^d C_p\,E\left[([M^i]_t)^{\frac{p}{2}}\right]\nonumber\\ 
    		&\leq d^p\,C_p\,\max_{1\leq i\leq d}E\left[([M^i]_t)^{\frac{p}{2}}\right]\nonumber\\ 
    		&= d^p\,C_p\,\max_{1\leq i\leq d}E\left[\left(e_i^T\,[M]_t\,e_i\right)^{\frac{p}{2}}\right]\nonumber\\ 
    		&\leq d^p\,C_p\,E\left[\|[M]_t\|^{\frac{p}{2}}\right], 
    	\end{align}
    	where $e_i$ denotes the $i$-th canonical basis vector.
    \end{proof}

	\begin{cor}\label{cor:truncation}
    	Let $h:\,\mathbb{R}^d\rightarrow\mathbb{R}^d$ be a truncation function. Then there exists a constant $C_h>0$ such that,
    	\begin{equation}
    		\|z-h(z)\| \leq C_h\,(\|z\|^2\wedge\|z\|)\quad\text{and}\quad \|h(z)\|\leq C_h\,(\|z\|\wedge 1) \qquad \text{for all $z\in\mathbb{R}^d$};
    	\end{equation}
    	\begin{equation}
    	    \|z\| \leq C_h\,(\|z\|^2\wedge\|z\|) \qquad\text{ outside some neighbourhood of zero.}
    	\end{equation}
    \end{cor}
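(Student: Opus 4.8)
The plan is to produce one explicit constant $C_h$ and verify all three inequalities by a short case distinction on the size of $\|z\|$. First I would use the definition of a truncation function to fix some $\delta\in\,]0,1[$ with $h(z)=z$ whenever $\|z\|\le\delta$, and set $M:=\sup_{z\in\mathbb{R}^d}\|h(z)\|<\infty$, which is finite because $h$ is bounded. I would then take $C_h:=\delta^{-2}(1+M)$, and note $C_h\ge\delta^{-2}\ge1$ since $0<\delta<1$; this is, up to the harmless summand $1$, the constant already announced in the footnote of Lemma \ref{lem:TimeCont0}.

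Next I would split $\mathbb{R}^d$ into the three regimes $\|z\|\le\delta$, $\delta<\|z\|<1$, and $\|z\|\ge1$. On these, $\|z\|^2\wedge\|z\|$ equals $\|z\|^2$, $\|z\|^2$, $\|z\|$, while $\|z\|\wedge1$ equals $\|z\|$, $\|z\|$, $1$, respectively. On $\{\|z\|\le\delta\}$ we have $z-h(z)=0$ and $h(z)=z$, so the first two inequalities are trivial. For $\|z\|>\delta$ I would use the crude bounds $\|z-h(z)\|\le\|z\|+M$ and $\|h(z)\|\le M$ together with the elementary facts $\|z\|^2\wedge\|z\|\ge\delta^2$ and $\|z\|\wedge1>\delta$, which hold on that region; in the sub-case $\delta<\|z\|<1$ one additionally estimates $\|z\|+M\le1+M$, and in the sub-case $\|z\|\ge1$ one uses $\|z\|+M\le(1+M)\|z\|$. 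Comparing these against $C_h(\|z\|^2\wedge\|z\|)$ and $C_h(\|z\|\wedge1)$ then yields the first two displayed inequalities with the chosen $C_h$.

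For the last claim I would take the excluded neighbourhood of zero to be $\{\|z\|<\delta\}$. On $\{\delta\le\|z\|<1\}$ one has $\|z\|^2\wedge\|z\|=\|z\|^2\ge\delta\|z\|$, hence $\|z\|\le\delta^{-1}(\|z\|^2\wedge\|z\|)\le C_h(\|z\|^2\wedge\|z\|)$; on $\{\|z\|\ge1\}$ one has $\|z\|^2\wedge\|z\|=\|z\|$, so the inequality holds with constant $1\le C_h$.

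I do not expect a genuine obstacle: the statement is elementary. The only point requiring care is keeping track of which branch of $\|z\|^2\wedge\|z\|$ and of $\|z\|\wedge1$ is active in each regime, and arranging that the single constant $C_h$ dominates all the resulting ratios simultaneously — which is precisely why the factor $\delta^{-2}$, rather than $\delta^{-1}$, is built into it.
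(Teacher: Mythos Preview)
Your proof is correct and follows essentially the same approach as the paper: fix $\delta\in(0,1)$ with $h=\mathrm{id}$ on $\{\|z\|\le\delta\}$, set $M=\|h\|_\infty$, and verify the bounds by case distinction on $\|z\|$ using that $\|z\|^2\wedge\|z\|\ge\delta^2$ and $\|z\|\wedge1\ge\delta$ off the small ball. The only cosmetic difference is that the paper exploits the observation $\delta\le\|h\|_\infty$ to take the slightly smaller constant $C_h=\delta^{-2}\|h\|_\infty$, whereas you add the harmless $+1$ and work with $C_h=\delta^{-2}(1+M)$; either choice suffices.
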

    
    \begin{proof}
    	By the definition of $h$, there exists a $0<\delta\leq
    	1$ such that $h(z)=z$ for all $z$ with $\|z\|\leq\delta$.
    	Fix $\delta>0$. Then clearly $\delta \leq \|h\|_\infty$, and
    	\begin{align}
    		\|z-h(z)\|
    		&=\|z-h(z)\|\,\mathbf{1}_{\{\|z\|\geq \delta\}}(z)\leq \|z\|\,\mathbf{1}_{\{\|z\|\geq \delta\}}(z)+\|h\|_\infty \,\mathbf{1}_{\{\|z\|\geq \delta\}}(z),
    	\end{align}
    	\begin{equation}
    		\|h(z)\|\leq\|z\|\,\mathbf{1}_{\{\|z\|< \delta\}}(z)+\|h\|_\infty\,\,\mathbf{1}_{\{\|z\|\geq \delta\}}(z).
    	\end{equation}
    	Further, observe that
    	\begin{align}
    		\|z\|\,\mathbf{1}_{\{\|z\|\leq\delta\}}(z)
    		&\leq \underbrace{(\|z\|\wedge 1)}_{= \|z\|}\,\underbrace{\frac{\|h\|_\infty}{\delta}}_{\geq1}\,\underbrace{\frac{1}{\delta}}_{\geq1}\,\mathbf{1}_{\{\|z\|\leq\delta\}}(z)=(\|z\|\wedge 1)\,\delta^{-2}\,\|h\|_\infty\,\mathbf{1}_{\{\|z\|\leq\delta\}}(z),
    	\end{align}
    	\begin{align}
    		\|z\|\,\mathbf{1}_{\{\|z\|\geq \delta\}}(z)
    		&\leq \|z\|\,\underbrace{\frac{\|z\|\wedge1}{\delta}}_{\geq1}\,\underbrace{\frac{\|h\|_\infty}{\delta}}_{\geq 1}\,\mathbf{1}_{\{\|z\|\geq \delta\}}(z)= (\|z\|^2\wedge \|z\|)\,\delta^{-2}\,\|h\|_\infty\,\mathbf{1}_{\{\|z\|>\delta\}}(z),
    	\end{align}
    	\begin{align}
    		\|h\|_\infty\,\mathbf{1}_{\{\|z\|\geq \delta\}}(z)
    		&\leq \underbrace{\frac{\|z\|\wedge1}{\delta}}_{\geq 1}\,\|h\|_\infty\,\mathbf{1}_{\{\|z\|\geq \delta\}}(z)\leq 
    		\begin{cases}
    			(\|z\|\wedge 1)\,\delta^{-2}\,\|h\|_\infty\,\mathbf{1}_{\{\|z\|>\delta\}}(z)\\
    			(\|z\|^2\wedge \|z\|)\,\delta^{-2}\,\|h\|_\infty\,\mathbf{1}_{\{\|z\|>\delta\}}(z),
    		\end{cases}
    	\end{align}
    	where we used $\frac{\|z\|}{\delta}, \frac{1}{\delta}\geq 1$ in the last step. Hence, $C_h=\delta^{-2}\,\|h\|_\infty$ is a possible choice.
    \end{proof}

	We now state a comparison principle for Hamilton-Jacobi-Bellman equations from \cite{hollender_levy-type_2016} in a simplified way which is suitable for our setting. Let $G:\, \mathbb{R}^d \times \mathbb{R}^d \times \mathbb{S} \times \textnormal{C}_b^2(\mathbb{R}^d) \to \mathbb{R}$ be some operator.
	Set $h(z):=z\,\mathbf{1}_{\{ \|z\| \leq 1\}}(z)$ for all $z \in\mathbb{R}^d$.
	Suppose that there exist an index set $\Theta$ and a family $\{G_{\theta}\}_{\theta \in \Theta}$ of linear operators $G_{\theta}:\,  \mathbb{R}^d \times \mathbb{R}^d \times \mathbb{S} \times \textnormal{C}_b^2(\mathbb{R}^d) \to \mathbb{R}$ given by
	\begin{equation}
		G_{\theta}(x,p,X,f)= -p^T\, b_{\theta}(x)-\tr\Big[X\,a_{\theta}(x)\Big] -\int_{\mathbb{R}^d} \left[ f(x+z)-f(x)-D_xf(x)^T\,h(z) \right] k_{\theta}(dz)\label{eq:defFunctionHollender}
	\end{equation}
	for some Borel-measurable functions $b_\theta,\, a_\theta,\, k_\theta:\,\mathbb{R}^d\rightarrow \mathbb{R}^d,\, \mathbb{S}_+,\, \mathfrak{L}_+$ satisfying Condition \ref{con:HJB}, such that
	\begin{equation} \label{eq:InfAppendix}
		G=\inf_{\theta\in\Theta} G_{\theta}
	\end{equation}
	on $\mathbb{R}^d \times \mathbb{R}^d \times \mathbb{S} \times \textnormal{C}_b^2(\mathbb{R}^d)$. We need a preliminary condition.
	\begin{con} \label{con:HJB}
		Let $\{G_{\theta}\}_{\theta \in \Theta}$ be a family of operators as defined in \eqref{eq:defFunctionHollender} with coefficient functions $b_\theta,\, a_\theta,\, k_\theta$ on $\mathbb{R}^d$.
		Set $\sigma_\theta(x) :=\sqrt{a_\theta(x)}$ for all $x\in\mathbb{R}^d$. \newline
		\begin{longtable}{p{1cm}p{14cm}}
			\emph{(i)} & \emph{Boundedness}: The coefficients of the local part are bounded, i.e.,
			\begin{equation*}
				\sup_{\theta \in \Theta} \Big\{ \| b_{\theta}(x) \| + \| \sigma_{\theta}(x)\| \Big\} < \infty
			\end{equation*}
			in every $x \in \mathbb{R}^d$, and the family of measures of the nonlocal part satisfy
			\begin{equation*}
				\sup_{\theta \in \Theta} \int_{\mathbb{R}^d} \Big( \| z \|^2 \,\mathbf{1}_{ \{\| z \| \leq 1\}}(z) + \mathbf{1}_{ \{\| z \| > 1\}}(z) \Big)\,k_{\theta}(dz) < \infty
			\end{equation*}
			a uniform integrability condition at zero and infinity.\\
			\emph{(ii)} & \emph{Tightness}: The family of measures of the nonlocal part satisfies 
			\begin{equation*}
				\lim_{\delta \to 0} \,\sup_{\theta \in \Theta}\, \int_{\| z \| \leq \delta} \| z \|^2\, k_{\theta}(dz)
				=0
				=\lim_{R \to \infty} \,\sup_{\theta \in \Theta} \,\int_{R < \| z \| }  1\, k_{\theta}(dz)
			\end{equation*}
			 a uniform tightness condition at the origin and infinity.\\
			\emph{(iii)}& \emph{Continuity}: There exists some constant $C>0$ 
			such that
			\begin{align*}
				\sup_{\theta \in \Theta} \Big\{ \| b_{\theta}(x) -b_{\theta}(y) \| + \| \sigma_{\theta}(x) -\sigma_{\theta}(y) \| \Big\} \leq C\,\| x-y \|
			\end{align*}
			for all 
			$x,y \in \mathbb{R}^d$.
		\end{longtable}
	\end{con}
	\begin{prop} \label{cor:ComparisonPrinciple}
		Let $G$ be an operator satisfying \eqref{eq:InfAppendix} for linear operators $\lbrace G_{\theta} \rbrace_{\theta \in \Theta}$ given in \eqref{eq:defFunctionHollender}. Let $v_1,\, v_2:\,\mathbb{R}_+\times\mathbb{R}^d$ be bounded viscosity sub- and supersolutions respectively of
		\begin{align} \label{eq:AppendixPIDEViscosity}
			\partial_t v(t,x)+ G(x,D_xv(t,x), D_x^2v(t,x), v(t, \cdot)) = 0 
		\end{align}
		in $\mathbb{R}_+\times\mathbb{R}^d$.
		If the initial values $v_1(0,\cdot), v_2(0,\cdot)$ are continuous and 
		\begin{equation*}
			v_1(0,x) \leq v_2(0,x)
		\end{equation*}
		for all $x \in \mathbb{R}^d$, then $v_1(t,x) \leq v_2(t,x)$ for all $(t,x) \in \mathbb{R}_+ \times \mathbb{R}^d$.
	\end{prop}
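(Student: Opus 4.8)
The plan is to read Proposition~\ref{cor:ComparisonPrinciple} as a specialization of the comparison principle for Hamilton--Jacobi--Bellman integro-differential equations established in \cite[Chapter~2]{hollender_levy-type_2016}, and to reduce the proof to matching hypotheses. Under the identifications $G=\inf_{\theta\in\Theta}G_\theta$, drift $b_\theta$, diffusion $a_\theta$ with square root $\sigma_\theta:=\sqrt{a_\theta}$, L\'evy kernels $k_\theta$, vanishing zeroth-order coefficient, truncation $j(z)=z\,\mathbf{1}_{\{\|z\|\le1\}}(z)=h(z)$, and infinite time horizon, the three items of Condition~\ref{con:HJB} are precisely the uniform boundedness of $b_\theta$ and $\sigma_\theta$ together with the uniform integrability of $k_\theta$ near $0$ and near $\infty$, the uniform tightness of $k_\theta$ at the origin and at infinity, and the uniform Lipschitz continuity of $b_\theta$ and $\sigma_\theta$ --- exactly the structural assumptions of the cited result, the remaining monotonicity and growth conditions there being trivially met since the zeroth-order term vanishes and $j(z)=z$. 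Invoking that theorem then yields $v_1\le v_2$ on $\mathbb{R}_+\times\mathbb{R}^d$ whenever $v_1(0,\cdot)\le v_2(0,\cdot)$ with continuous initial data.

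For completeness I sketch the argument underlying the cited result. Suppose, toward a contradiction, that $m:=\sup_{\mathbb{R}_+\times\mathbb{R}^d}(v_1-v_2)>0$; using boundedness of $v_1,v_2$, the initial ordering, and continuity of the initial data, the supremum can be forced to be attained at a positive time. One then applies the doubling of variables with penalization: for $\epsilon,\eta,\gamma>0$ maximize
\[
\Phi(t,x,s,y)=v_1(t,x)-v_2(s,y)-\tfrac{1}{2\epsilon}\|x-y\|^2-\tfrac{1}{2\epsilon}(t-s)^2-\eta(\|x\|^2+\|y\|^2)-\gamma t,
\]
which has a maximizer $(\bar{t},\bar{x},\bar{s},\bar{y})$ because $v_1$ is upper semicontinuous, $v_2$ is lower semicontinuous, and the $\eta$- and $\gamma$-terms confine the maximum to a compact set and to finite times. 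Standard estimates give $\bar{t},\bar{s}>0$ for small parameters, $\tfrac1\epsilon\|\bar{x}-\bar{y}\|^2\to0$ as $\epsilon\to0$, and $\eta(\|\bar{x}\|^2+\|\bar{y}\|^2)\to0$ as $\eta\to0$. The nonlocal version of the Jensen--Ishii lemma (the maximum principle for semicontinuous functions in integro-differential form, cf.\ \cite{barles_second-order_2008}, \cite{jakobsen_maximum_2006}) then furnishes symmetric matrices $X\le Y+O(\epsilon)$ and admissible parabolic super- and sub-jets of $v_1$ at $(\bar{t},\bar{x})$ and of $v_2$ at $(\bar{s},\bar{y})$, which are inserted into the sub- and supersolution inequalities.

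Subtracting the two inequalities and using $\inf_\theta A_\theta-\inf_\theta B_\theta\le\sup_\theta(A_\theta-B_\theta)$ bounds $\gamma$ by the supremum over $\theta\in\Theta$ of the difference of the \emph{linear} operators $G_\theta$ at the two points. The first-order term is $O\!\big(\tfrac1\epsilon\|\bar{x}-\bar{y}\|^2\big)+O(\eta)$ by the uniform Lipschitz bound on $b_\theta$ and the jet estimate $\|p\|\lesssim\tfrac1\epsilon\|\bar{x}-\bar{y}\|+\eta(\|\bar{x}\|+\|\bar{y}\|)$; the second-order term is controlled, uniformly in $\theta$, by $\tr\!\big[\sigma_\theta(\bar{x})^T X\sigma_\theta(\bar{x})-\sigma_\theta(\bar{y})^T Y\sigma_\theta(\bar{y})\big]\lesssim\tfrac1\epsilon\|\sigma_\theta(\bar{x})-\sigma_\theta(\bar{y})\|^2\lesssim\tfrac1\epsilon\|\bar{x}-\bar{y}\|^2$, via $X\le Y+O(\epsilon)$ and the Lipschitz bound on $\sigma_\theta$; the $\eta$-contributions are absorbed using the at-most-linear growth of $b_\theta,a_\theta$ and vanish as $\eta\to0$. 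The step I expect to be the main obstacle is the nonlocal term $\int_{\mathbb{R}^d}[\cdots]\,k_\theta(dz)$, which I would split at $\|z\|=\delta$: on $\{\|z\|\le\delta\}$ the integrand coming from the smooth penalization functions is $O(\|z\|^2)$, so the uniform tightness of $k_\theta$ at the origin makes this part $o(1)$ as $\delta\to0$ uniformly in $\theta,\epsilon,\eta,\gamma$; on $\{\|z\|>\delta\}$ one exploits that $(\bar{t},\bar{x},\bar{s},\bar{y})$ maximizes $\Phi$, and since the $\tfrac1\epsilon\|x-y\|^2$ term is unchanged under $x\mapsto\bar{x}+z$, $y\mapsto\bar{y}+z$ one obtains $v_1(\bar{t},\bar{x}+z)-v_2(\bar{s},\bar{y}+z)\le v_1(\bar{t},\bar{x})-v_2(\bar{s},\bar{y})+\eta\big(\|\bar{x}+z\|^2+\|\bar{y}+z\|^2-\|\bar{x}\|^2-\|\bar{y}\|^2\big)$, so the difference of the two away-from-zero nonlocal terms is nonpositive up to an $\eta$-error that is integrable against $k_\theta$ by the uniform control of $\int_{\|z\|>\delta}k_\theta(dz)$ and of $\int_{\|z\|\le1}\|z\|^2\,k_\theta(dz)$. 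Collecting all bounds in the inequality for $\gamma$ and letting first $\epsilon\to0$, then $\delta\to0$, then $\eta\to0$, and finally $\gamma\to0$ gives $0<\gamma\le0$, a contradiction; hence $m\le0$, i.e.\ $v_1\le v_2$ on $\mathbb{R}_+\times\mathbb{R}^d$.
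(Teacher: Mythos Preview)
Your proposal is correct and takes the same approach as the paper: the paper's entire proof is the single sentence ``This follows immediately from \cite[Corollary 2.34]{hollender_levy-type_2016},'' and your first paragraph does precisely this reduction, verifying that Condition~\ref{con:HJB} matches the hypotheses of that corollary. Your subsequent sketch of the doubling-of-variables argument is additional detail the paper does not supply; it is a reasonable outline of the standard machinery behind the cited result, but goes well beyond what the paper itself provides.
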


	\begin{proof}[Proof of Proposition \ref{cor:ComparisonPrinciple}]
		This follows immediately from \cite[Corollary 2.34]{hollender_levy-type_2016}.
	\end{proof}

	\printbibliography
\end{document}